\title{On quadratically enriched excess and residual intersections}
\author{Tom Bachmann}
\address{T.~Bachmann, Department of Mathematics, LMU Munich, Germany}
\email{\href{mailto:tom.bachmann@zoho.com}{tom.bachmann@zoho.com}}
\author{Kirsten Wickelgren}
\address{K.~Wickelgren, Department of Mathematics, Duke University, Durham, NC, USA }
\email{\href{kirsten.wickelgren@duke.edu}{kirsten.wickelgren@duke.edu}}
\newtheorem{proposition}{Proposition}
\newtheorem{corollary}[proposition]{Corollary}
\newtheorem{lemma}[proposition]{Lemma}
\newtheorem{theorem}[proposition]{Theorem}
\newtheorem{meta-theorem}[proposition]{Meta-Theorem}
\newtheorem{meta-corollary}[proposition]{Meta-Corollary}
\newtheorem*{conjecture*}{Conjecture}
\newtheorem*{theorem*}{Theorem}
\newtheorem*{corollary*}{Corollary}
\newtheorem*{proposition*}{Proposition}
\newtheorem*{lemma*}{Lemma}
\theoremstyle{definition}
\newtheorem{definition}[proposition]{Definition}
\newtheorem*{definition*}{Definition}
\newtheorem*{construction*}{Construction}
\theoremstyle{remark}
\newtheorem{remark}[proposition]{Remark}
\newtheorem{warning}[proposition]{Warning}
\newtheorem*{remark*}{Remark}
\newtheorem{question}[proposition]{Question}
\newtheorem{example}[proposition]{Example}
\newtheorem*{example*}{Example}
\numberwithin{proposition}{section}
\newcommand{\wequi}{\simeq}
\DeclareRobustCommand{\ul}{\underline}
\newcommand{\iHom}{\ul{\operatorname{Hom}}}
\newcommand{\RiHom}{\RR\ul{\operatorname{Hom}}}
\newcommand{\Hom}{\operatorname{Hom}}
\newcommand{\Ext}{\operatorname{Ext}}
\newcommand{\Spec}{\operatorname{Spec}}
\newcommand{\Proj}{\operatorname{Proj}}
\def\op{\mathrm{op}}
\newcommand{\RR}{\mathbb{R}}
\newcommand{\Z}{\mathbb{Z}}
\newcommand{\calO}{\mathcal{O}}
\newcommand{\calE}{\mathcal{E}}
\newcommand{\calL}{\mathcal{L}}
\newcommand{\calV}{\mathcal{V}}
\newcommand{\calU}{\mathcal{U}}
\newcommand{\calW}{\mathcal{W}}
\let\scr=\mathcal
\let\bb=\mathbb
\def\A{\bb A}
\def\P{\bb P}
\newcommand{\SH}{\mathcal{SH}}
\def\ph{\mathord-}
\newcommand\bbb[1]{\ensuremath{{\mathbb{#1}}}}
\newcommand{\GW}{\mathrm{GW}}
\newcommand{\Tr}{\mathrm{Tr}}
\newcommand{\W}{\mathrm{W}}
\newcommand{\Bl}{\mathrm{Bl}}
\newcommand{\image}{\mathrm{Im}}
\newcommand{\ind}{\mathrm{ind}}
\def\H{\mathrm{H}}
\newcommand{\lra}[1]{\langle #1 \rangle}
\newcommand{\colim}{\operatorname{colim}}
\def\CAlg{\mathrm{CAlg}}
\newcommand{\Rrank}{\operatorname{rank}}
\newcommand{\BH}{\operatorname{B}}
\newcommand{\Sym}{\operatorname{Sym}}
\newcommand{\coker}{\operatorname{Coker}}
\newcommand{\Jac}{\operatorname{Jac}}
\newcommand{\FF}{\mathbb{F}}
\newcommand{\codim}{\operatorname{codim}}
\newcommand{\depth}{\operatorname{depth}}
\def\Map{\mathrm{Map}}
\def\map{\mathrm{map}}
\newcommand{\Kos}{\mathrm{Kos}}
\newcommand{\schemeW}{Q}
\def\dual{*}
\def\Dual{\mathbb{D}}
\def\Dcat{\mathrm{D}}
\def\Dbc{\Dcat^{b,c}}
\def\Dperf{\Dcat^p}
\def\tmap{\tau}
\newcommand{\NB}[1]{\todo[color=gray!40]{#1}}
\newcommand{\tom}[1]{\todo[color=green]{#1}}
\newcommand{\kirsten}[1]{\todo[color=yellow]{#1}}
\newcommand{\NB}[1]{}
\newcommand{\tom}[1]{}
\newcommand{\kirsten}[1]{}
\renewcommand{\todo}[1]{}
\DeclareFontFamily{T1}{cbgreek}{}
\DeclareFontShape{T1}{cbgreek}{m}{n}{<-6>  grmn0500 <6-7> grmn0600 <7-8> grmn0700 <8-9> grmn0800 <9-10> grmn0900 <10-12> grmn1000 <12-17> grmn1200 <17-> grmn1728}{}
\DeclareSymbolFont{quadratics}{T1}{cbgreek}{m}{n}
\DeclareMathSymbol{\qoppa}{\mathord}{quadratics}{19}
\DeclareMathSymbol{\Qoppa}{\mathord}{quadratics}{21}
\newcommand{\iftoggleverb}[1]{%
  \ifcsdef{etb@tgl@#1}
    {\csname etb@tgl@#1\endcsname\iftrue\iffalse}
    {\etb@noglobal\etb@err@notoggle{#1}\iffalse}%
}
\begin{document}

\begin{abstract}
We use recent duality results of Eisenbud--Ulrich to give tools to study quadratically enriched residual intersections when there is no excess bundle. We use this to prove a formula for the Witt-valued Euler number of an almost complete intersection. We give example computations of quadratically enriched excess and residual intersections.

\end{abstract}

\maketitle

\tableofcontents

\section{Introduction}

\subsection{Excess and residual intersections in enumerative geometry}\label{subsectionInto:excess_residual_intersections}
Excess intersections occur when the zero locus of $s$ polynomial functions has smaller codimension than $s$. Removing a (large dimension) subscheme of the zero locus results in a residual intersection. A famous example is Chasles' Theorem that there are $3264$ smooth conics in the complex projective plane tangent to five generally chosen conics \cite{Chasles}. Without taking into account an excess intersection caused by double lines, B\'ezout's theorem suggests that there would be  $6^5 = 7776$ \cite[p.290]{EisenbudHarris}. 

It is remarkable that the number of such conics does not depend on the choice of the original five conics, provided these are chosen generally. This ``invariance of number" can be shown in two steps: First, the sum of the number of smooth conics tangent to the given five and a contribution from the double lines is the Euler number of a vector bundle which is independent of the choice of conics. Second, the contribution from the double lines is independent of the choice of conics. We will see both of these carry over in a more arithmetic context, enriched in bilinear or quadratic forms.

Results in classical enumerative geometry frequently assume the geometric configurations in question are defined over the complex numbers. Replacing the complex numbers with a non-algebraically closed field usually breaks invariance of number. $\A^1$-homotopy theory provides an enumerative geometry which can restore invariance of number over a nonalgebraically closed field $k$ in a meaningful way. In this context, counts of geometric configurations are frequently valued in the Grothendieck--Witt group $\GW(k)$ of $k$, defined to be the group completion of isomorphism classes of symmetric, nondegenerate, $k$-valued bilinear forms \cite{kass2016class} \cite{Levine-EC}. 

While excess intersection formulas are available in this context \cite{Fasel-excess} \cite{DJK} \cite[Remark 5.22]{BW-A1Eulerclasses}, few computations in $\A^1$-enumerative geometry have been carried out, despite their frequency in classical enumerative geometry over $\mathbb{C}$. A notable exception is Sabrina Pauli's computation of the contribution to the enriched count of lines on a quintic $3$-fold from infinite families of lines on the Fermat quintic $3$-fold \cite{Pauli-Quintic_3_fold}. She uses an enriched dynamic intersection, and can likewise use this to compute a quadratically enriched count of the lines on the non-smooth cubic surface $\{ xyz=0 \}$ \cite[1.4.3.1]{Pauli-thesis}. 

In this paper, we develop tools for computing $\A^1$-excess and residual intersections from duality of coherent sheaves. $\mathbb{A}^1$-Euler numbers are related to coherent or Grothendieck--Serre duality by a Gauss-Bonnet formula \cite{levine2018motivic} valid for all vector bundles \cite{BW-A1Eulerclasses}. Their computation is thus closely related to computations of canonical modules and trace maps. A major inspiration for this paper is the recent work of Eisenbud and Ulrich on coherent duality in residual intersections \cite{eisenbud2019duality}. For a complete intersection $R=k[[x_1,\ldots,x_d]]/(f_1,\ldots, f_s)$, the canonical module is isomorphic to $R$ itself. For a separable Noether normalization $k[[x_1,\ldots, x_{d-s}]] \subseteq R$, the associated trace map corresponds to the Jacobian, giving a distinguished socle element of the $0$-dimensional ring $R/(x_1,\ldots, x_{d-s})$. Eisenbud and Ulrich relax the condition that $R$ is a complete intersection, allowing $R$ to contain an excess intersection corresponding to an ideal $I$ of codimension $g<s$ with $(f_1,\ldots, f_s) \subset I \subset k[[x_1,\ldots,x_d]]$ (satisfying a local condition on the number of generators and a depth condition). They investigate perfect pairings valued in the canonical module of the residual intersection $Z(\mathrm{Ann}(I/J))$ which itself has an explicit formula, giving duality results analogous to the case of the complete intersection. This situation should be mirrored in $\A^1$-enumerative geometry: for a regularly embedded zero locus, we have the excess intersection formula \cite{Fasel-excess}, building on work of Fulton and MacPherson \cite{Fulton-MacPherson78}. In the case of a $0$-dimensional complete intersection, the contribution to the Euler number corresponds to the Jacobian. Absent the regular embedding condition, we turn to other methods.  

\subsection{Organization}
The remaining subsections of this introduction summarize our main results.
Each subsection corresponds to one section of the article.

\subsection{Conics in $\mathbb{P}^{n}$ vanishing on $\mathbb{P}^2$ (\S\ref{section:ConicsP2n+1vanishP2})}
In \S\ref{section:ConicsP2n+1vanishP2}, we give an example computation of the $\A^1$-Euler number of an excess bundle, giving a quadratically enriched answer to the question ``How many points lie in the intersection of $n$ general conics in $\bbb{P}^n_k$ containing a fixed plane, for $n$ an odd positive integer?" (We restrict to the case where $n$ is odd because this is an orientation condition required to obtain a meaningful quadratic enrichment.) The associated $\A^1$-Euler numbers are those of $\calO(2)^n$ on $\bbb{P}^n_k$ and the excess bundle on $\bbb{P}^2_k$. The former is a multiple of the hyperbolic form, but the latter is not.  It is the class of an explicit bilinear form given in Theorem \ref{thm:conics_containing_P2}. New to the $\A^1$-context is the failure of multiplicativity of the Euler class in short exact sequences to effectively compute the $\bbb{A}^1$-Euler number of the excess bundle. Instead, we use the long exact sequence in cohomology of coherent sheaves and results on the trace. In Appendix \ref{Section:M2code}, we provide Macaulay2 code to compute the resulting $\A^1$-excess intersection numbers. In contrast to the classical situation, the quadratically enriched excess intersection depends on the general choice of conics. See Corollary \ref{co:intersection_general_conics_containing_P2_non_invariance_number}. Given classical excess intersection enumerative results, even with orientation conditions satisfied, the question is not ``What is the quadratic enrichment?" but ``Is there a quadratic enrichment?"

\subsection{An arithmetic enrichment of the $5$-conics problem (\S\ref{section:conics_tgt_5_conics})}\label{subsection:Intro5conics}
In \S\ref{section:conics_tgt_5_conics} we show the answer to this question is ``yes" for Chasles's theorem on the $5$-conics problem. For a ring $R$, let $\GW(R)$ denote the Grothendieck--Witt group of $R$, defined to be the group completion of the semi-ring under $\oplus$ and $\otimes$ of isomorphism classes of symmetric, nondegenerate bilinear forms $V \times V \to R$, where $V$ is a locally free $R$-module. For a unit $r$ in $R^\times$, let $\lra{r}$ in $\GW(R)$ denote the class of the bilinear form $R \times R \to R$ given by $(x,y) \mapsto rxy$. For a finite, \'etale extension $R \to S$, let 
\begin{equation}\label{eq:defn:tr}
\Tr_{S/R}: \GW(S) \to \GW(R) 
\end{equation} denote the {\em transfer} map which takes the class of a bilinear form $\beta: V \times V \to S$ to the composition $\tau_{S/R} \circ \beta: V \times V \to R$, where $\tau_{S/R}: S \to R$ denotes the trace map of the finite \'etale extension \cite[IV \S 5]{Lang-algebra}. Note that for all $r,s \in R^\times$\begin{equation}\label{lra_unchanged_squares}
\lra{r} = \lra{s^2r}
\end{equation}

The space of conics in $\mathbb{P}^2_k$ can be identified with $\mathbb{P}^5_k$. Given a conic $C'$ in $\mathbb{P}^2_k$, there is a degree $6$ equation determining the hypersurface of conics in $\mathbb{P}^5_k$ which are tangent to $C'$ \cite[\S8.1]{EisenbudHarris}. This data is a section of $\calO(6)$. For $5$ general conics, we obtain a section $\sigma$ of $\oplus_{i=1}^5 \calO(6)$. $\sigma$ has isolated zeros at the finite number of smooth conics $C$ tangent to all $5$ given conics. 

The {\em Jacobian determinant} of $\sigma$ at $C$, denoted $\Jac \sigma (C)$, is given by the determinant of the matrix of partials of the five degree $6$ equations in the variables given by local coordinates on $\bbb{P}^5$. (See Equation~\eqref{eq:Jac_ind_computation}.) It is naturally an element of the fiber at $C$ of the line bundle $\Hom(\det T\bbb{P}^5, \det \oplus_{i=1}^5 \calO(6)) $ on the space $\bbb{P}^5$ of conics, as discussed further in Section~\ref{subsection:notation_conventions}. Here, $T\bbb{P}^5$ denotes the tangent bundle. The canonical isomorphism  $$\Hom(\det T\bbb{P}^5, \det \oplus_{i=1}^5 \calO(6)) \cong  \calO (24) \cong \calO(12)^{\otimes 2}$$ identifies $\Jac \sigma (C)$ with an element of the fiber of $\calL^{\otimes 2}$ at $C$ where $\calL$ is the line bundle $\calO(12)$. (This is the data of a relative orientation of $\oplus_{i=1}^5 \calO(6)$. See Definition \ref{df:relative_orientation}.) Choosing any local trivialization of $\calL$, we may identify $\Jac \sigma (C)$ with a scalar in $k(C)$, and in fact the condition that the conics are general allows us to assume  $\Jac \sigma (C) \neq 0$. Because changing the trivialization of $\calL$ multiplies the scalar associated with  $\Jac \sigma (C)$ by a square, there is a well-defined element $\lra{\Jac \sigma (C)}$ in $\GW(k(C))$ by Equation~\eqref{lra_unchanged_squares}. In Theorem~\ref{thmconics_tgt_5_conics}, we prove the following: 

\begin{theorem}
Let $k$ be a perfect field of characteristic not $2$ and let $C_1, \ldots, C_5$ be general conics in $\bbb{P}_k^2$. There is equality in $\GW(k)$
\[
\sum_{C\text{ conic tangent to all }C_i} \Tr_{k(C)/k} \lra{\Jac \sigma (C)} = \frac{3264}{2}(\lra{1}+\lra{-1})
\]
\end{theorem}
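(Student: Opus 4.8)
The plan is to identify the left-hand side with the $\A^1$-Euler number of a vector bundle which is a direct sum of \emph{squares} of line bundles, and then to compute the Euler number of any such bundle by deforming to an ``everywhere-squared'' section. Write $\mathbb H := \lra 1 + \lra{-1}$ for the hyperbolic form.

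\emph{Step 1 (geometry).} Let $X$ be the space of complete conics, i.e.\ the blow-up $\Bl_V\P^5_k$ of the $\P^5$ of conics along the Veronese surface $V\cong\P^2_k$ of double lines; let $E$ be the exceptional divisor, $H$ the pullback of $\calO_{\P^5}(1)$, and $P:=H$, $L:=2H-E$ the classes of ``conics through a point'' and ``conics tangent to a line.'' For every conic $C'$ the closure of ``conics tangent to $C'$'' has class $2P+2L$, and $|2P+2L|=|\calO_X(P+L)^{\otimes 2}|$ is base-point free; hence, for $C_1,\dots,C_5$ general --- so that, as in the statement, $\Jac\sigma(C)\neq 0$ at each of the finitely many conics $C$ tangent to all $C_i$ --- the five tangency conditions assemble into a section $\tilde\sigma$ of
\[
\tilde E:=\big(\calO_X(P+L)^{\otimes 2}\big)^{\oplus 5}
\]
having only simple zeros, all lying in $X\setminus E=\P^5\setminus V$ and equal to the smooth conics tangent to all $C_i$. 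I will check that $\det\tilde E\otimes(\det TX)^{\vee}=\calO_X(24H-8E)=\calO_X(12H-4E)^{\otimes 2}$, that the associated relative orientation restricts on $\P^5\setminus V$ to the one fixed in the statement via $\calO(24)\cong\calO(12)^{\otimes 2}$, and that the local $\A^1$-index of $\tilde\sigma$ at a simple zero $C$ is $\Tr_{k(C)/k}\lra{\Jac\sigma(C)}$. This gives
\[
\text{LHS}=\sum_{C}\Tr_{k(C)/k}\lra{\Jac\sigma(C)}=n^{\A^1}(\tilde E,X).
\]

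\emph{Step 2 (sums of squared line bundles).} I claim that for line bundles $\calL_1,\dots,\calL_r$ on a smooth proper $k$-variety $Y$ of dimension $r$, each with base-point-free complete linear system, and $\bigoplus_i\calL_i^{\otimes 2}$ relatively oriented, one has $n^{\A^1}\big(\bigoplus_i\calL_i^{\otimes 2},Y\big)=\tfrac12\big(\prod_i c_1(\calL_i^{\otimes 2})\cap[Y]\big)\mathbb H$. Indeed, the Euler number is independent of the section, so replace the given one by $(\tau_1^2,\dots,\tau_r^2)$ for general $\tau_i\in H^0(Y,\calL_i)$; the divisors $\{\tau_i=0\}$ are smooth and meet transversally in $N:=\prod_i c_1(\calL_i)\cap[Y]$ reduced points, near each of which, in suitable oriented coordinates, the section is the $r$-fold smash power of $y\mapsto y^2$. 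Multiplicativity of the local $\A^1$-degree under smash products and the Eisenbud--Levine/Scheja--Storch computation $\deg_0^{\A^1}(y\mapsto y^2)=\mathbb H$ give local index $\mathbb H^{r}=2^{r-1}\mathbb H$ at each point, so $n^{\A^1}=N\cdot 2^{r-1}\mathbb H=\tfrac12\cdot 2^r N\cdot\mathbb H$; and $2^rN$ is the rank since the rank of the $\A^1$-Euler number is the classical Euler number.

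\emph{Step 3 (conclusion and main obstacle).} Apply Step 2 with $r=5$ and all $\calL_i=\calO_X(P+L)$: the classical Euler number is $(2P+2L)^5=2^5(P+L)^5=32\cdot 102=3264$, the $5$-conics characteristic number, i.e.\ Chasles's theorem, so $\text{LHS}=n^{\A^1}(\tilde E,X)=\tfrac{3264}{2}(\lra1+\lra{-1})$. The main obstacle is Step 1: building the space of complete conics and verifying that the five tangency conditions do form a section of $\tilde E$ with only simple zeros at smooth conics for general $C_i$ (base-point-freeness of $|2P+2L|$ plus Bertini), and matching the two relative orientations on $\P^5\setminus V$. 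As an alternative more in the spirit of the paper's residual-intersection machinery, one can work on $\P^5$ itself: Step 2 applied to $\calO(6)^{\oplus 5}=\big(\calO(3)^{\otimes 2}\big)^{\oplus 5}$ gives $n^{\A^1}(\calO(6)^{\oplus 5},\P^5)=\tfrac12\cdot 6^5\,\mathbb H=3888\,\mathbb H$, whence $\text{LHS}=3888\,\mathbb H-\ind_V(\sigma)$; here $\ind_V(\sigma)$ is the contribution of $\sigma$ along the Veronese surface of double lines, which enters the zero scheme with multiplicity two so that the naive excess-bundle formula does not apply, and showing $\ind_V(\sigma)=2256\,\mathbb H$ --- via the quadratically enriched residual-intersection/Eisenbud--Ulrich tools of the paper together with the classical double-line count $6^5-3264=4512$ --- would be the main obstacle along that route.
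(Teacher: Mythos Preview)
Your argument is correct and follows the same overall architecture as the paper: pass to the space of complete conics $X=\Bl_V\P^5$, identify the left-hand side with the $\A^1$-Euler number of the rank-$5$ bundle $\calV=(\pi^*\calO(6)\otimes\calO(-2E))^{\oplus 5}$ on $X$ (your $\tilde E$), observe that this Euler number is hyperbolic, and then read off the coefficient $3264/2$ from Chasles's classical count.

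The substantive difference is the hyperbolicity step. The paper simply notes that $\calV$ has an odd-rank direct summand (any one of the five line-bundle summands) and invokes the general principle that this forces $n^{\A^1}(\calV)$ to be a multiple of $\lra 1+\lra{-1}$, citing \cite[Proposition 19]{FourLines} and \cite[Proposition 3.4]{Levine-EC}. You instead use the finer decomposition $\calV\cong(\calL^{\otimes 2})^{\oplus 5}$ with $\calL=\calO_X(3H-E)$, replace the section by one of the shape $(\tau_1^2,\dots,\tau_5^2)$, and compute each local index directly as $\mathbb H^5=16\,\mathbb H$ via the EKL form of $y\mapsto y^2$ together with multiplicativity of the local $\A^1$-degree. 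The paper's route is a one-line citation; yours is self-contained but needs the extra input that $|\calL|=|3H-E|$ is base-point free on $X$ (which does hold, since the Veronese is scheme-theoretically cut out by quadrics, so their differentials span the conormal bundle), and you should remark that the orientation ambiguity at each degenerate zero is harmless because $\lra u\,\mathbb H=\mathbb H$. Your ``squared-section'' lemma is slightly more general in that it does not require the five summands to be isomorphic, whereas the odd-rank-summand argument applies to any direct sum of line bundles regardless of whether they are squares.
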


Question~\ref{arithmetic-geometric-interpretation-local-degree-5-conics-problem} asks if there is an arithmetic-geometric interpretation of $ \lra{\Jac \sigma (C)}$ along the lines of \cite[Definition 9, Corollary 52]{CubicSurface}. We do not have an answer, but we have in mind the sort of interpretations in \cite{CubicSurface} \cite{FourLines} \cite{FinashinKharlamov-SegreIndices} \cite{Pauli-Quintic_3_fold}.

In this problem, we retain invariance of number in both of the ways mentioned in \S~\ref{subsectionInto:excess_residual_intersections}. First, there is the enriched $\bbb{A}^1$-Euler number $n(V)$ in $\GW(k)$ for a relatively oriented vector bundle $V$ mentioned previously; it is computed as the sum of a local index over zeros of an appropriate section, but is independent of the choice of section. This is discussed in more detail elsewhere, see for example \cite{Levine-EC} \cite{BW-A1Eulerclasses}. The second invariance, that of the excess contribution to $n(V)$, is not computed by the available excess intersection formulas in this case, because the double lines in the zero locus of $\sigma$ are not regularly embedded in the space of conics \cite[p. 463(d)]{EisenbudHarris} \cite[Chapter 6]{GriffithsHarris-Principles_algebraic_geometry}. Blowing up the locus of double lines as in  \cite[Chapter 6]{GriffithsHarris-Principles_algebraic_geometry} shows the contribution of the excess intersection does not depend on the choice of conics in this situation. See Remark \ref{rmk:Veronese_contribution_5_conics_problem}. It would be desirable to have more tools to address this sort of situation. We thus turn to considering coherent duality in the case of excess intersections following Eisenbud--Ulrich \cite{eisenbud2019duality}.

\subsection{Grothendieck duality and residual intersections (\S4)}
Let $R$ be a (commutative) ring and $J \subset I$ ideals.
We denote by $J:I = \mathrm{Ann}(I/J)$ the \emph{ideal quotient}.
More generally, if $X$ is a scheme and $Z \supset Z'$ are closed subschemes, we can consider a global version of this construction and define \[ Z : Z' = Z(I(Z) : I(Z')). \]
In some cases $Z:Z'$ has underlying set $\overline{Z \setminus Z'}$, and in general we think of $Z:Z'$ as an algebro-geometric enhancement of the closure of the complement of $Z'$ in $Z$.
This is a classical construction, closely related to primary decomposition (see e.g. \cite[Chapter 4]{Atiyah-Macdonald},\cite[Chapter 4 \S4 ]{CoxLittleOshea-IdealsVarietiesAlgorithms}).\NB{Also \S3.3.3 Ene and Herzog {\em Grobner basis in commutative algebra}}

The relationship to excess intersections is as follows.
Suppose that $Z$ is locally cut out by $s$ equations, but has codimension $<s$.
If $W=Z:Z'$ does have codimension $s$, then $W$ is called a residual intersection (of the pair $(Z,Z')$).
Assume from now on that $X$ is Gorenstein.
In favorable cases, $W \hookrightarrow X$ behaves somewhat like a regular immersion.
More concretely one hopes to find conditions on $Z'$ which ensure that whenever $W=Z:Z'$ is a residual intersection (for varying $Z$), then $W$ has certain good properties.
This program was initiated by Artin--Nagata \cite{ArtinNagata72}.
Assume for now that $X$ is local, $Z=Z(J), Z'=Z(I)$.
Ulrich \cite{ulrich1994artin} has found conditions (which are somewhat complicated and reviewed in \S\ref{subsub:hypotheses}) on $I$ which ensure that $W$ is \emph{Cohen--Macaulay}.
He moreover proves that then $I^{t+1}/JI^t$ is a canonical module for $W$, where $t = s-\codim I$.

We now reformulate this result using Grothendieck duality.
Write $i: W \to X$ for the closed immersion.
Since $X$ is Gorenstein, $i^!(\scr O_X)[s]$ is a canonical module for $W$, and hence by Ulrich's result there is a \emph{non-canonical} isomorphism $I^{t+1}/JI^t[-s] \wequi i^!(\scr O_X)$.
Eisenbud--Ulrich refine this in \cite[Theorem 4.6]{eisenbud2019duality} by writing down an explicit such isomorphism, which however still depends on choices (specifically a ``good'' set of generators for the ideal $J$).
We improve this by removing the choices, as follows.
\begin{theorem}[see Proposition \ref{pr:global_J/KJ_loc_free} and Theorem \ref{thm:global_canonical_trace}] \label{thm:intro-residual}
Let $W=Z:Z'$ be a residual intersection satisfying appropriate hypotheses.
Put $I=I(Z')$, $J=I(Z)$, $K=I(W)$ and $i:W \hookrightarrow X$.
\begin{enumerate}
\item The restricted conormal sheaf $C_{Z/X}|_W = J/J^2|_W=J/JK$ is a free $\scr O_W$-module of rank $s$.
\item There is a \emph{canonical} isomorphism \[ i^!(\scr O_X) \wequi I^{t+1}/JI^t[-s] \otimes \det(J/JK)^\dual. \]
\end{enumerate}
\end{theorem}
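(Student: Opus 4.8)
The plan is to prove both parts locally on the charts where $Z$ is cut out by $s$ equations, and then to glue, using the explicit duality isomorphism of \cite{eisenbud2019duality} and recording exactly how it depends on the chosen equations.

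For (1): the claim is local, so assume $X=\Spec R$ and $J=(f_1,\dots,f_s)$ on a chart. Sending $e_i\mapsto f_i$ gives a surjection $\scr O_W^{\oplus s}\twoheadrightarrow J/JK$, and the point is its injectivity, i.e.\ that there are no \emph{residual relations} among $f_1,\dots,f_s$. This is built into the residual intersection hypotheses (\S\ref{subsub:hypotheses}) and is implicit in the analysis of \cite{ulrich1994artin,eisenbud2019duality}; I would isolate it as a lemma, deducing it from the Cohen--Macaulayness of $W$ together with a count of ranks and minimal generators, or simply cite it. Any two size-$s$ generating sequences $\underline f,\underline f'$ of $J$ are related by a matrix $A$ over $R$ with $f'_j=\sum_i a_{ij}f_i$; reducing modulo $K$ and using that both reductions are bases of the free module $J/JK$ forces $\bar A\in\GL_s(\scr O_W)$. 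Hence $\det(J/JK)$ is canonically defined and is canonically trivialized, by $\bar f_1\wedge\cdots\wedge\bar f_s$, on every chart on which $J$ is $s$-generated.

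For (2): \cite[Theorem 4.6]{eisenbud2019duality} attaches to each ``good'' generating sequence $\underline f$ of $J$ an explicit isomorphism $\phi_{\underline f}\colon I^{t+1}/JI^t[-s]\wequi i^!(\scr O_X)$, and good sequences exist locally. I would glue these into the asserted global map by the rule: over a chart carrying a good $\underline f$, send $\omega\otimes\lambda\mapsto \lambda(\bar f_1\wedge\cdots\wedge\bar f_s)\cdot\phi_{\underline f}(\omega)$, where $\lambda$ is a local section of $\det(J/JK)^\dual$. Since each $\phi_{\underline f}$ is an isomorphism, so is this map, once it is well defined; and well-definedness amounts to a single transformation rule: if $\underline f'$ is a second good sequence with $f'_j=\sum_i a_{ij}f_i$, then
\[ \phi_{\underline f'} = \det(\bar A)^{-1}\cdot\phi_{\underline f}. \]
Indeed $\bar f'_1\wedge\cdots\wedge\bar f'_s=\det(\bar A)\,\bar f_1\wedge\cdots\wedge\bar f_s$, so the change in the trivialization of $\det(J/JK)^\dual$ exactly cancels this factor, and the local maps patch (on overlaps any two good sequences are related by such a matrix, by (1)).

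The main obstacle is the transformation rule $\phi_{\underline f'}=\det(\bar A)^{-1}\phi_{\underline f}$ (the precise exponent being dictated by the conventions of \cite{eisenbud2019duality}, and cancelling in any case). Proving it means unwinding the construction of \cite[Theorem 4.6]{eisenbud2019duality}: the isomorphism there is assembled from a Koszul-type residual approximation complex on $\underline f$ together with an explicit socle/trace element, and the dependence on $\underline f$ is concentrated in the top exterior power $\wedge^s\scr O_X^{\oplus s}$, which transforms by $\det A$ under $\underline f\mapsto\underline f A$; reducing mod $K$ --- legitimate since $i^!(\scr O_X)$ is an $\scr O_W$-module --- yields $\det\bar A$. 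This is the residual-intersection analogue of the classical fact that the Jacobian, or socle generator, of a zero-dimensional complete intersection scales by the determinant of a change of defining equations. I would verify it by following their formula term by term, which is where the real work lies; everything else is formal gluing together with the cited commutative algebra.
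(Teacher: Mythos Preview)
Your overall strategy for (2)---define the map locally using a choice of generators, then check that it transforms by $\det(\bar A)^{\pm 1}$ so that the twist by $\det(J/JK)^\dual$ cancels---is exactly the idea the paper uses. The difference is in how the transformation rule is established. Rather than unwinding the construction of \cite[Theorem 4.6]{eisenbud2019duality} term by term, the paper introduces a \emph{modified Koszul complex} $\Kos'(a_1,\dots,a_s)$ (Definition \ref{def:K'}), with $\Kos'_n = I^{t+1-n}\Lambda^n R^s$, proves that it is a free resolution of $I^{t+1}/JI^t$ (Lemma \ref{lemm:I-resn}), and defines the trace as the composite $I^{t+1}/JI^t \wequi \Kos' \to \Sigma^s R$ via projection to the top term. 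Since $\Kos'$ is functorial in the generating data and the top term is $\Lambda^s R^s$, the transformation rule $\tau_{\underline f'} = \det(M)\cdot\tau_{\underline f}$ is immediate from functoriality, with no need to open up the Eisenbud--Ulrich construction. The price is that one must now prove that the resulting adjoint map $\tau^\dagger$ is an isomorphism; the paper does this by induction on $i$ for $J_i=(a_1,\dots,a_i)$ with good generators, using \cite[Theorem 4.1(1), Corollary 4.3, Proposition 3.3]{eisenbud2019duality} at the inductive steps (see diagrams \eqref{eq:can-proof-1} and \eqref{eq:can-proof-2}). So the paper trades your hard step (tracking determinants through residual approximation complexes) for a different one (an inductive identification of $\tau^\dagger$), and the modified Koszul complex is what makes this trade possible.

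For (1), however, there is a genuine gap. The freeness of $J/KJ$ is not ``implicit'' in \cite{ulrich1994artin,eisenbud2019duality} in a citable form, and your proposed shortcut via Cohen--Macaulayness of $R/K$ plus a rank count does not work: knowing that $R/K$ is Cohen--Macaulay and that $J/KJ$ is $s$-generated does not by itself prevent relations among the generators. The paper's proof (Proposition \ref{prop:conormal-free}) is an honest induction on $u$ for $J_u/K_uJ_u$ along a set of \emph{good} generators, using that $a_u$ is regular on $R/K_{u-1}$ and that $K_u = (K_{u-1}+(a_u)):I$ (both from \cite[Proposition 3.3(1)]{eisenbud2019duality}) to show that any relation $\sum x_j a_j \in K_uJ_u$ forces $x_j\in K_u$. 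This argument is not long, but it is not a matter of bookkeeping either; you would need to supply it or something equivalent.
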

Note that if $Z$ has codimension $s$, so that $Z \hookrightarrow X$ is a regular immersion, we may choose $Z'=\emptyset$ and get $W=Z$.
In this case Theorem \ref{thm:intro-residual} reduces to well-known statements \cite[III \S1 and Corollary 7.3]{hartshorne1966residues}.
Note also that because our isomorphisms are canonical, it is no longer necessary to assume that $X$ is local.

One notable application of the above result is to the construction of pushforward maps.
If $p: Y \to X$ is a proper morphism of suitable schemes and $K$ is a dualizing complex on $X$, then one obtains a pushforward map in (e.g.) Grothendieck--Witt theory \begin{equation}\label{eq:pushforward} p_*: \GW(Y, p^! K) \to \GW(X, K), \end{equation} where by $\GW(X, L)$ we mean the hermitian $K$-theory group of $X$ with duality $\RiHom(\ph, L)$ \cite{CDHHLMNNS-1}.
If $X$ is Gorenstein then we may choose $K=\scr O_X$ and so the right hand side is just the Grothendieck--Witt group of symmetric bilinear bundles.
However $\GW(Y, p^!\scr O_X)$ remains difficult to access without a concrete identification of $p^! \scr O_X$.
Theorem \ref{thm:intro-residual} supplies this in some cases.

\subsection{Application to almost complete intersections (\S5)}
Let $X$ be a regular scheme of dimension $d$, $V$ a vector bundle on $X$ of rank $d$, and $\sigma$ a section of $V$.
Assume that $Z := Z(\sigma)$ has dimension $1$; in particular the section $\sigma$ is (mildly) \emph{degenerate}.
Thus the usual method (see e.g. \cite[Theorem 3]{CubicSurface}) for computing the Euler class $e(V,\sigma)$ no longer applies.
Let $Z'$ be the Cohen--Macaulay locus of $Z$ (obtained roughly speaking by removing embedded and isolated points).
Then $W=Z:Z'$ has dimension $0$ and hence is a residual intersection; thus the above theory applies (with $t=1$).
We have a symmetric bilinear form on $W$ valued in $I^2/JI$, given by $I/J$ and the multiplication map.\footnote{A priori this might be degenerate. Eisenbud--Ulrich prove non-degeneracy in \cite[Theorem 2.2]{eisenbud2019duality}.}
Using the pushforward from Equation \eqref{eq:pushforward} and the following paragraphs, we obtain $i_*(I/J)$, a form on $X$ valued in $\det V^\dual[d]$.
Reinterpreting work of van Straten--Warmt \cite{van2011gorenstein}, we prove the following.
\begin{theorem}[see Theorem \ref{thm:EU-vSW}]
If $W=Z:Z'$ satisfies the assumptions of Theorem \ref{thm:intro-residual}, then the form $i_*(I/J)$ is non-degenerate and its image in the \emph{Witt} group of $\det V^\dual[d]$-valued forms on $X$ coincides with the image of the Euler class $e(V, \sigma)$.
\end{theorem}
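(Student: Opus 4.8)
The plan is to identify the Euler class $e(V,\sigma)$ with the Witt class of the symmetric Koszul complex of $\sigma$, and then to extract $i_*(I/J)$ from it by a sublagrangian reduction; the module‑theoretic input for the reduction is van Straten--Warmt's description of the dualizing module of a one‑dimensional almost complete intersection, reinterpreted via Grothendieck duality and globalized using the canonical identifications of Theorem~\ref{thm:intro-residual}. I begin by recalling that for any section $\sigma$ of $V$ the Koszul complex $\Kos(\sigma)=(\Lambda^\bullet V^\dual,\,\iota_\sigma)$ is a perfect complex on $X$, supported on $Z=Z(\sigma)$, and carries a canonical symmetric isomorphism $\phi\colon\Kos(\sigma)\xrightarrow{\ \sim\ }\RiHom(\Kos(\sigma),\det V^\dual[d])$ induced by the perfect pairings $\Lambda^iV^\dual\otimes\Lambda^{d-i}V^\dual\to\det V^\dual$; under the Gauss--Bonnet formalism recalled in the introduction the Euler class $e(V,\sigma)$ is represented by the symmetric complex $(\Kos(\sigma),\phi)$, so it is the class $[\Kos(\sigma),\phi]$ in $\W(X,\det V^\dual[d])$. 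Since $X$ is regular and $\depth(J,\scr O_X)=\codim Z=d-1$, the Koszul homology of $\sigma$ is concentrated in homological degrees $0$ and $1$: $H_0=\scr O_Z$ and $H_1=:N$, the module of relations.

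The next step is a sublagrangian reduction. The canonical truncation $L:=\tau^{\le -1}\Kos(\sigma)\simeq N[1]$ includes into $\Kos(\sigma)$ with cofibre $\scr O_Z$. Because $N$ is supported in codimension $d-1\ge 1$, the complex $\RiHom(N[1],\det V^\dual[d])$ has cohomology in non‑negative degrees, so the composite $L\to\Kos(\sigma)\xrightarrow{\ \phi\ }\RiHom(\Kos(\sigma),\det V^\dual[d])\to\RiHom(L,\det V^\dual[d])$ vanishes; that is, $L$ is isotropic. One computes that its orthogonal complement is $L^\perp\simeq\RiHom(\scr O_Z,\det V^\dual[d])$, and the formalism of isotropic reduction in triangulated Witt groups produces a non‑degenerate symmetric complex $(L^\perp/L,\bar\phi)$ with $[\Kos(\sigma),\phi]=[L^\perp/L,\bar\phi]$ in $\W(X,\det V^\dual[d])$. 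Restricting to the open complement $U=X\setminus W$, where $Z\cap U=Z'\cap U$ is a one‑dimensional Cohen--Macaulay almost complete intersection, the self‑duality of the Koszul complex identifies $N$ with the twisted dualizing sheaf of $Z'$, so $L^\perp|_U\simeq L|_U$ and hence $L^\perp/L$ is supported on the finite scheme $W$.

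It remains to identify $(L^\perp/L,\bar\phi)$ with $i_*(I/J)$ carrying the multiplication pairing $I/J\times I/J\to I^2/JI$. For this I would dualize the exact sequence $0\to I/J\to\scr O_X/J\to\scr O_X/I\to 0$, use that $\scr O_{Z'}=\scr O_X/I$ is Cohen--Macaulay of codimension $d-1$ (so its derived dual is a single sheaf in degree $d-1$), and feed in van Straten--Warmt's computation of $N$ and of the canonical module of the one‑dimensional almost complete intersection to conclude that $L^\perp/L$ is quasi‑isomorphic to $I/J$ placed in degree $0$. Theorem~\ref{thm:intro-residual} supplies the canonical isomorphism $i^!(\det V^\dual[d])\simeq I^2/JI$ --- using $t=1$, $s=d$, and that $\sigma$ trivializes $\det(J/JK)\cong\det V^\dual|_W$ --- under which $\bar\phi$ becomes a symmetric $I^2/JI$‑valued form on $I/J$, and a local computation of the reduced pairing identifies it with multiplication. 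Non‑degeneracy of $i_*(I/J)$ then follows from non‑degeneracy of the multiplication form, which is \cite[Theorem 2.2]{eisenbud2019duality}, together with the fact that the finite Gorenstein pushforward $i_*$ preserves non‑degeneracy, and the asserted Witt‑class equality is exactly the conclusion of the isotropic reduction.

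The main obstacle is the identification in the third paragraph: matching van Straten--Warmt's classically phrased bilinear‑form constructions with the coherent/Grothendieck‑duality picture, tracking the twist so that the reduced form is literally the $I^2/JI$‑valued multiplication pairing and not merely abstractly isomorphic to it, and checking that every identification is canonical so that the argument is not confined to the local case. A secondary technical point is ensuring that the triangulated‑Witt‑theoretic input --- isotropic reduction and its compatibility with $i^!$ --- is available for an arbitrary regular $X$ with the duality $\RiHom(-,\det V^\dual[d])$; here one invokes the hypotheses imported from Theorem~\ref{thm:intro-residual} (the Artin--Nagata/Ulrich conditions) to guarantee the good behaviour of $N$ and of the residual scheme.
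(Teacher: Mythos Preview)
Your approach is essentially the same as the paper's: algebraic surgery (the paper's term, in the Poincar\'e $\infty$-category framework of \cite{CDHHLMNNS-1}) on the map $\tau_{\ge 1}\Kos(\sigma)\to\Kos(\sigma)$, with the surgered object identified as $I/J$ via Proposition~\ref{prop:vSW}. For the obstacle you flag, the paper observes that homotopy classes of forms on $I/J$ inject into those on $L^\perp\simeq\Dual(\scr O_Z)$ (for $t$-structure reasons, since such forms are maps $I/J\to\scr O/J$) and then verifies a single commutative square using the modified Koszul complexes of Definition~\ref{def:K'} and Theorem~\ref{thm:global_canonical_trace}(3); note also that non-degeneracy of the reduced form is automatic from the surgery formalism, so your appeal to \cite[Theorem~2.2]{eisenbud2019duality} is unnecessary.
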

We stress that in the above result it is necessary to pass to Witt groups (instead of hermitian $K$-groups); see Example \ref{ex:O3+O2onP2_sectionsyx3}.

\subsection{Higher dimensions, duality with other assumptions, etc.} There are several paths forward from here (at least). M. Chardin, J. Na\'{e}liton and Q.H. Tran \cite{Chardin19} express the canonical modules of residual intersections as symmetric powers of $I/J$  under different hypotheses from \cite{eisenbud2019duality}, which may produce interesting pushforwards in Hermitian K-theory, and even perhaps formulas for $\A^1$-residual intersections. Along different lines, one could reasonably hope that Eisenbud and Ulrich's results on socle generators \cite[\S7]{eisenbud2019duality} give formulas for $\A^1$-Euler numbers, since this is the case for complete intersections. For some efforts in this direction, see \S\ref{subsec:explicit}, but we do not have examples of this. It would likewise be desirable to have an enriched solution to the $5$ conics problem following \cite{Fulton-MacPherson78} with a good enrichment of Segre classes of cones and contributions from the distinguished varieties of an intersection, but we have not attempted this.  Also notably missing from our results is a way to use Theorem \ref{thm:intro-residual} and \cite[Theorem 2.2]{eisenbud2019duality} in the case where $t>1$ to compute contributions to quadratically enriched counts, although we hope the globalization in Theorem \ref{thm:intro-residual} will be useful for this. What is missing is understanding what contribution might be given by $I$. When $Z(I)$ has dimension $1$ this contribution is hyperbolic, perhaps because the dimension is odd (cf. \cite[Proposition 3.4]{Levine-EC}, \cite[Proposition 19]{FourLines}).

\subsection{Notation, conventions, and some background}\label{subsection:notation_conventions}

\begin{definition}\label{df:relative_orientation}
A {\em relative orientation} of a vector bundle $V$ on a smooth scheme $X$ is the data of a line bundle $\mathcal{L}$ on $X$ and an isomorphism $\mathcal{L}^{\otimes 2} \cong \Hom(\det TX, \det V)$, where $TX$ denotes the tangent bundle of $X$. 
\end{definition}

For a relatively oriented $V$ on a smooth, proper $X$ over a field $k$, there is an $\bbb{A}^1$-Euler number $n^{\bbb{A}^1}(V)$ in $\GW(k)$. See for example \cite[Definition 3.7, Example 3.8]{BW-A1Eulerclasses}, \cite[Definition 35]{CubicSurface}, or \cite{Levine-EC}. (The ``n" in $n^{\bbb{A}^1}(\calE)$ stands for ``number" in contrast to the Euler ``class.") It is the sum of {\em local indices} $\ind_{Z} \sigma$ where $Z$ runs over the connected components of the zero locus $\{ \sigma = 0\}$ of a section $\sigma$ of $V$
\[
n^{\bbb{A}^1}(V) = \sum_{Z \subseteq \{ \sigma = 0\}} \ind_{Z} \sigma.
\]  As indicated by the notation, $n^{\bbb{A}^1}(V)$ is independent of the choice of section $\sigma$. We recall a few computations of $\ind_Z \sigma$. For further information and a full definition, see \cite[Definition 3.10]{BW-A1Eulerclasses} or \cite[Definition 30]{CubicSurface}.

When $Z=\{p\}$ is a zero of $\sigma$ which is \'etale over $k$, the index $\ind_{p} \sigma$ is computed as follows. The zero section determines a splitting of the short exact sequence of $k(p)$-vector spaces
\[
0 \to V(p) \to T_{\sigma(p)}V \to T_p X \to 0
\] where $V(p)$ denotes the fiber of $V$ at $p$ as a locally free sheaf. This splitting defines a projection $ T_{\sigma(p)}V \to V(p)$. Composing the map $T_p \sigma: T_p X \to T_{\sigma(p)}V$ with this projection, defines the {\em intrinsic derivative} of $\sigma$ at $p$
\begin{equation*}
\overline{T\sigma(p)}: T_{p} X \to V(p).
\end{equation*}  Thus $\Jac \sigma(p) :=\det \overline{T\sigma(p)}$ is an element of the fiber $\Hom(\det TX, \det V)(p) \cong \mathcal{L}(p)^{\otimes 2}$. A trivialization of $\mathcal{L}(p)$ identifies $\Jac \sigma(p) $ with an element of $k(p)$, which is non-zero by the assumption that $p$ is a simple zero of $\sigma$. Changing the trivialization of $\mathcal{L}(p)$ changes this element by a square. Thus we obtain a well-defined element $\lra{\Jac \sigma(p)} \in \GW(k(p))$. Then
\begin{equation}\label{eq:indpsigma_etale}
\ind_p \sigma = \Tr_{k(p)/k} \lra{\Jac \sigma(p)},
\end{equation} where $\Tr$ is as in Equation~\eqref{eq:defn:tr}.

$\Jac \sigma(p)$ is the Jacobian determinant of $\sigma$ viewed in local coordinates. See \cite[Section 4]{CubicSurface} for this point of view. The example of importance for this paper is that of \cite[3.1]{McKean-Bezout}, where $V = \oplus_{i =1}^n \calO_{\mathbb{P}^n}(d_i)$. Then $\sigma = (f_0,\ldots,f_n)$ where $f_i$ are homogeneous polynomials of degree $d_i$ in the standard homogenous coordinates $[x_0,\ldots,x_n]$. Suppose $p$ has $x_{\ell} \neq 0$. Define $f^{\ell}_i : = f_i((-1)^{\ell} \frac{x_0}{x_{\ell}}, \ldots, \frac{x_n}{x_{\ell}})$. Then 
\begin{equation}\label{eq:Jac_ind_computation}
\Jac \sigma(p) = \det \big(\frac{\partial f_i^{\ell}}{\partial(x_j/x_{\ell})} \big)_{i,j \neq \ell}.
\end{equation} See \cite[Lemma 5.5]{McKean-Bezout}.

Chow groups of a smooth scheme over a field $k$ admit a quadratic enrichment known as {\em Chow groups of oriented cycles} or {\em Chow--Witt} groups \cite[Section 2.1]{Fasel-LecturesChowWitt}. In analogy with the description of elements of Chow groups as rational equivalence classes of formal sums of subvarieties with integral coefficients, elements of Chow--Witt groups can be thought of as equivalence classes of formal sums of integral subvarieties with coefficients in the Grothendieck--Witt group of their function fields. These cohomology groups are represented by a $\bbb{P}^1$-spectrum $\H\tilde\Z$ called {\em generalized motivic cohomology} in stable $\bbb{A}^1$-homotopy theory $\SH(k)$ \cite{bachmann-very-effective}. The $\bbb{A}^1$-Euler number $n^{\bbb{A}^1}(V)$ just described is the pushforward of a refined Euler class $e(V,\sigma)$ in $\H\tilde\Z^{V^*}_{\{ \sigma = 0\}}(X):=[X/(X - \{\sigma = 0 \}),V/(V-0) \wedge \H\tilde\Z]_{\SH(k)}$ in Chow--Witt cohomology with support in $\{ \sigma = 0\}$ in degree $V^*$. See for example  \cite[\S5.3]{BW-A1Eulerclasses}.

There are (refined) Euler classes in other cohomology theories. For example, the Koszul complex of $(V,\sigma)$ with its standard duality determines a class $[e(V,\sigma)]$ in the $V$-twisted Witt group  (or $L$-group) $\W_Z^V(X)$ of $X$ with support in $Z$. The class $[e(V,\sigma)]$ is the refined Euler class in Witt theory. There is more information recalled on Witt theory in Section~\ref{subsection:Poincare_infinity_categories}.

Throughout \S\ref{sec:res-int} and \S\ref{sec:almost-complete-int}, we use small amounts of abstract homotopy theory.
In particular, we view derived categories as $\infty$-categories in the sense of \cite{lurie-htt}, that is, weakly enriched in spaces.
For example, given two maps of complexes which become equal after passing to a resolution, we call them \emph{homotopic}.
When talking about commutative diagrams, we always mean diagrams which are commutative up to homotopy.\footnote{And in fact we always mean that they are commutative up to a specific, but implicit, homotopy.}

Given a vector bundle $V$ of rank $n$, we denote by $\widetilde{\det} V$ the complex consisting of $\det V$ concentrated in degree $n$.

\subsection{Acknowledgements}
We wish to thank Claudia Polini for introducing us to the paper \cite{eisenbud2019duality}, David Eisenbud for useful discussions (he owes us money as he promised payment for asking questions he can answer), Stephen McKean for useful discussions on \cite{van2011gorenstein}, and Sabrina Pauli for useful discussions on quadratically enriched excess intersections. Kirsten Wickelgren was partially supported by NSF CAREER DMS 2001890 and NSF DMS 2103838. And she gratefully thanks Joseph Rabinoff for providing `` a room of one's own" to work on this paper.

\section{Conics in $\bbb{P}^{2n+1}$ vanishing on $\bbb{P}^2$}\label{section:ConicsP2n+1vanishP2}

Let $k$ be a field of characteristic not $2$.  We work in the category of schemes over $k$. Let $n$ be an odd positive integer and let $\Lambda \subset \bbb{P}^n$ be a reduced linear subscheme of dimension $2$, so $\Lambda$ is isomorphic to the projective plane $\Lambda \cong \bbb{P}^2$. Let $\{Q_1=0\},\ldots,\{Q_n=0\}$ be degree $2$ hypersurfaces containing $\Lambda$. By a {\em general choice} of such hypersurfaces, we mean a choice lying in some open subset. For such a general choice, the intersection $\cap_{i=1}^n \{ Q_i = 0\}$ as a scheme is the disjoint union of $\Lambda$ itself and a fixed number of isolated, reduced points. In other words, over the algebraic closure of $k$, we have $$\cap_{i=1}^n  \{ Q_i = 0\}= \Lambda \coprod \Gamma $$ where $\Gamma$ is a set of $2^n - {n \choose n-2} - {n \choose n-1} - {n \choose n}$ reduced points. See \cite[Proposition 13.5]{EisenbudHarris}.

We give an ``arithmetic count" or a ``quadratically enriched count" of the number of intersection points $\cap_{i=1}^n \{ Q_i = 0\}$ not lying in $\Lambda$ valued in $\GW(k)$. By this we mean that we will give each of the points of $\Gamma$ a weight in $\GW(k)$ associated to its field of definition and the quadrics $Q_i$ and compute the sum of these weights in $\GW(k)$. See also \cite{CubicSurface}, \cite{FourLines}, \cite{Levine-Witt}, \cite{McKean-Bezout}, \cite{pauli2020computing}, \cite{Pauli-Quintic_3_fold}, \cite{BW-A1Eulerclasses}, and \cite{enriched-inflection} for other arithmetic or quadratically enriched counts. We do this using a quadratically enriched excess intersection formula \cite[Remark 5.22(2)]{BW-A1Eulerclasses}, originally due to \cite{Fasel-excess} \cite{DJK}, and inspired by earlier work of Fulton and MacPherson among others \cite{fulton-intersection} \cite{Fulton-MacPherson78}. 

Excess intersection formulas rely on an excess bundle $\calE$ defined as follows. The $Q_i$ determine a section $\sigma=(Q_1,\ldots, Q_n)$ of $\oplus_{i=1}^n \calO(2)$ vanishing on $\Lambda \cup \Gamma$. Since the section $(Q_1,\ldots,Q_n)$ vanishes on $\Lambda$, there is a canonical map $N_{\Lambda} \bbb{P}^n  \to  \oplus_{i=1}^n \calO(2)$ from the normal bundle of $\Lambda$ in $\bbb{P}^n$ to the restriction of $ \oplus_{i=1}^n \calO(2)$ to $\Lambda$ given by forming $$\sigma^*:  \oplus_{i=1}^n \calO(2)^\dual \to I(\Lambda)$$ pulling back to $\Lambda$ and dualizing.  Explicitly, $N_{\Lambda} \bbb{P}^n \cong \calO(1)^{n-2}$, and without loss of generality, we may assume that $\Lambda$ is the simultaneous vanishing locus of $(x_3,\ldots,x_n)$ where $[x_0:\ldots:x_n]$ denote the standard homogeneous coordinates on $\bbb{P}^n$. Then $(x_3,\ldots,x_n)$ determines an isomorphism $N_{\Lambda} \bbb{P}^n \cong \oplus_{i=3}^n \calO(1)$ and we may write $Q_i = \sum_{j=3}^n \tilde{M}_{ij} x_j$, where the $\tilde{M}_{ij}$ are linear forms in the variables $x_0,\ldots, x_n$. The map $N_{\Lambda} \bbb{P}^n \cong \oplus_{i=3}^n \calO(1) \to \oplus_{i=1}^n \calO(2) $ is determined by the $n$ by $n-2$ matrix $M$ whose $(i,j)$th entry is $\tilde{M}_{ij}(x_0,x_1,x_2,0,0,\ldots,0)$, 
\begin{equation}\label{Mdef}
M=[\tilde{M}_{ij}(x_0,x_1,x_2,0,0,\ldots,0):\substack{i=1,\ldots,n \\ j= 3,\ldots,n}].
\end{equation} For general $(Q_1,\ldots, Q_n)$ vanishing on $\Lambda,$ this map is injective (because $\sigma^*$ is surjective or because $M$ is generically full rank) and thus extends to the short exact sequence \begin{equation}\label{SESexcessbundle}0 \to N_{\Lambda} \bbb{P}^n \cong \oplus_{i=3}^n \calO(1) \to \oplus_{i=1}^n \calO(2) \to \calE \to 0\end{equation} of vector bundles on $\Lambda$ defining the excess bundle $\calE$.

To apply a quadratically enriched excess intersection formula, the excess bundle needs a relative orientation in the sense of Definition~\ref{df:relative_orientation}. The bundle $\oplus_{i=1}^n \calO(2)$ has a canonical relative orientation, defined over $\Z$, induced from the canonical isomorphism $\calO(2) \cong \calO(1)^{\otimes 2}$. The excess bundle inherits a canonical relative orientation from the isomorphism $$\det \calE \otimes N_{\Lambda} \bbb{P}^n \cong \det \oplus_{i=1}^n \calO(2) $$ induced by the short exact sequence~\eqref{SESexcessbundle}, the relative orientation on $\oplus_{i=1}^n \calO(2)$, and the canonical isomorphism $\det T\Lambda \otimes \det  N_{\Lambda} \bbb{P}^n  \cong \det T\bbb{P}^n$. See for example \cite[Remark 5.22(2)]{BW-A1Eulerclasses}.

The Euler number (and the $\bbb{A}^1$-Euler number) of $\oplus_{i=1}^n \calO(2)$ is the sum of contributions from the points of $\Gamma$ and the Euler number of $\calE$. It follows that the number of isolated intersection points $\vert \Gamma \vert = 2^n - {n \choose n-2} - {n \choose n-1} - {n \choose n}$ is $2^n$ minus the classical Euler number of $\calE$, which can be computed by applying the multiplicativity of the total Chern class to \eqref{SESexcessbundle}. By contrast with the classical situation, this is not sufficient to compute the $\bbb{A}^1$-Euler number $n^{\bbb{A}^1}(\calE)$ in $\GW(k)$, for example because multiplicativity results are much less powerful, e.g. $\lra{a}(\lra{1}+\lra{-1}) = (\lra{1}+\lra{-1}) $. See Section~\ref{subsection:notation_conventions} for further information on the $\bbb{A}^1$-Euler number.

Instead we compute $n^{\bbb{A}^1}(\calE)$ using the Grothendieck--Serre description of the Euler number of $\calE$ of \cite[Definition 2.1]{BW-A1Eulerclasses}, which relies on the Koszul complex of $(\calE,0)$ with its canonical self-duality. (Here, $0$ denotes the zero section of $\calE$.) For the convenience of the reader, we give this description here in this special case. There is a canonical self-duality of the Koszul complex
\[
0 \to \wedge^2 \calE^* \to \calE^* \to \calO \to 0
\] valued in $\det \calE^* [2] = \wedge^2\calE^*[2]$, where $2$ is the rank of $\calE$, and the differentials in the Koszul complex are $0$. The relative orientation on $\calE$ provides the data of a line bundle $\calL$ together with an isomorphism 
\begin{equation}\label{eq:L2detEdual=omegaP2}
\calL^{\otimes 2} \otimes \det \calE^* \cong \omega_{\Lambda}  ,
\end{equation} where $\omega_{\Lambda} = \det \Omega_{\Lambda/k}$ is the canonical sheaf on $\Lambda$. So in our case $\omega_{\Lambda} \cong \calO(-3)$, $\det \calE^* \cong \calO(-n-2)$ and $\calL \cong \calO(m)$ with 
\[
m= \frac{n-1}{2}
\] (recall that $n$ is odd). The Koszul complex represents the Euler class of $\calE$ in Hermitian K-theory in an appropriate sense. The Euler number $n^{\bbb{A}^1}(\calE)$ is its pushforward and is represented by the following bilinear form.

We have a nondegenerate symmetric bilinear pairing on $H^0(\calO \otimes \calL) \oplus H^{2}(\wedge^{2} \calE^* \otimes \calL)$ in which the summands are isotropic and the pairing between the summands is given by
\[
H^0(\calO \otimes \calL) \times H^{2}(\wedge^{2} \calE^* \otimes \calL)  \stackrel{\cup}{\to} H^2(\wedge^2 \calE^* \otimes \calL^{\otimes 2}) \cong H^2(\omega_{\Lambda}) \stackrel{\Tr}{\to} k 
\] where $\Tr: H^2(\omega_{\Lambda}) \to k$ is the canonical trace map from Grothendieck--Serre duality. We have a further nondegenerate symmetric bilinear pairing 
\begin{equation}\label{middlenGS}
H^1(\calE^* \otimes \calL) \times H^1( \calE^* \otimes \calL) \stackrel{\cup}{\to} H^2(\wedge^2 \calE^* \otimes \calL^{\otimes 2}) \cong H^2(\omega_{\Lambda}) \stackrel{\Tr}{\to} k. 
\end{equation}
The sum of these forms over $k$ represents $n^{\bbb{A}^1}(\calE)$ in $\GW(k)$. Note that up to copies of the hyperbolic form, $n^{\bbb{A}^1}(\calE)$ is given by \eqref{middlenGS}.

We compute $n^{\bbb{A}^1}(\calE)$ using this description of the Euler number, combined with the long exact sequence in coherent cohomology associated to \eqref{SESexcessbundle} and formulas for the coherent duality trace. The computation of $n^{\bbb{A}^1}(\calE)$ is the main result in the quadratically enriched count of the points of $\Gamma$ in Theorem~\ref{thm:conics_containing_P2}. This computation will be carried out in the following steps. Step 1: Compute $H^1(\calE^* \otimes \calL)$ as a cokernel.  Step 2: Produce  \v{C}ech cocycle representatives of the elements of $H^1(\calE^* \otimes \calL)$. Step 3: Calculate the isomorphism $\wedge^2 (\mathcal{E}^* \otimes \mathcal{L}) \cong \omega_{\mathbb{P}^2}$ from the relative orientation of $\mathcal{E}$. Step 4: Calculate the traces of cup products of the cocycles from Step 2.

{\bf Step 1.} The first step in this process is to tensor the dual of \eqref{SESexcessbundle} with $\calL$ and compute $H^1(\calE^* \otimes \calL)$. Tensoring the dual of \eqref{SESexcessbundle} by $\calL$, we have the short exact sequence \begin{equation}\label{SEScalE*calL}
0 \leftarrow \oplus_{i=3}^n \calO(m-1) \stackrel{M^T}{\longleftarrow} \oplus_{i=1}^n\calO(m-2) \leftarrow \calE^* \otimes \calL \leftarrow  0 
\end{equation}

Let $k[x_0,\ldots,x_n]_d $ denote the $k$-vector space of homogeneous degree $d$ elements of $k[x_0,\ldots,x_n]$. The long exact sequence in cohomology of coherent sheaves associated to \eqref{SEScalE*calL} produces the isomorphism $$H^1(\calE^* \otimes \calL) \stackrel{\delta}{\leftarrow} \coker(k[x_0,x_1,x_2]^{n-2}_{m-1} \stackrel{M^T}{\longleftarrow} k[x_0,x_1,x_2]^n_{m-2} )$$ where $\delta$ is induced by the boundary map and $M^T$ is the transpose of the matrix $M$ of \eqref{Mdef}.

{\bf Step 2.} We will next produce an explicit description of a \v{C}ech cocycle representing $$\delta(h_1,\ldots,h_{n-2})$$ for any $(h_1,\ldots,h_{n-2})$ in $k[x_0,x_1,x_2]^{n-2}_{m-1}$. Define $N_0$,$N_1$,$N_2$ to be the principal minors of $M=[M_{ij}]$ given by removing the last two rows of $M$, the first and the last, and the first two respectively, i.e., $$N_0 = [M_{ij}:\substack{i=1,\ldots,n-2 \\ j= 3,\ldots,n}],~~~~~N_1 = [M_{ij}:\substack{i=2,\ldots,n-1 \\ j= 3,\ldots,n}],~~~~~N_2 = [M_{ij}:\substack{i=3,\ldots,n \\ j= 3,\ldots,n}].$$ For $i=0,1,2$, the minor $N_i$ gives rise to a splitting of \eqref{SEScalE*calL} over 
\begin{equation}\label{fdef}
U_i = \{f_i \neq 0\} \subset \bbb{P}^2,\quad \quad \text{where } f_i = \det N_i,
\end{equation} by the map \begin{equation}\label{sectionsNiT-1}\iota_i(N_i^T)^{-1}: \oplus_{j=3}^n \calO(m-1)\vert_{U_i} \to \oplus_{j=1}^n\calO(m-2)\vert_{U_i},\end{equation} where $$\iota_i: \oplus_{j=3}^n \calO(m-2)\vert_{U_i} \to \oplus_{j=1}^n\calO(m-2)\vert_{U_i}$$ is the inclusion into the summands corresponding to columns in $N_i$. Namely, $N_i^T$ is a $(n-2)\times(n-2)$ matrix of linear forms in the variables $x_0,x_1,x_2$. Its inverse can be expressed as a $(n-2)\times(n-2)$ matrix of rational functions of degree $-1$. (By Cramer's rule, each of these functions is the quotient of a polynomial of degree $n-3$ by $f_i$.) This inverse defines a map $$(N_i^T)^{-1}: \oplus_{j=3}^n \calO(m-1)\vert_{U_i} \to \oplus_{j=3}^n\calO(m-2)\vert_{U_i} .$$ 

The set of open subsets $\calU=\{U_0, U_1, U_2 \}$ is a cover of $\bbb{P}^2$ for general conics $Q_i$ because $\cap_{i=0}^2 (\bbb{P}^2-U_i)$ is the locus where $M$ is less than full rank, and must be empty because $N_{\Lambda}\bbb{P}^n \to \oplus_{j=1}^n \calO(2)$ is injective for general conics $Q_i$. (See also Equation~\eqref{Mdef}.) Let $$\delta^{\calU}: k[x_0,\ldots,x_n]^{n-2}_{m-1} \to C^1(\calU; \calE^* \otimes \calL)$$ denote the \v{C}ech boundary map associated to the short exact sequence \eqref{SEScalE*calL}, the cover $\calU$ and the splittings $N_i$ for $i=0,1,2$ \cite[III \S 4]{hartshorne2013algebraic}. We will let $U_{ij} = U_i \cap U_j$ and $U_{012} = U_0 \cap U_1 \cap U_2$ when describing cocycles. Additionally, we will view $\calE^* \otimes \calL$ as a subsheaf of $\calO(m-2)^n$ and write sections of $\calE^* \otimes \calL$ as the corresponding sections of $\calO(m-2)^n$. Let $p_i: \calO(m-2)^n \to \calO(m-2)$ denote the $i$th projection map. It turns out that for any $h = (h_1,\ldots,h_{n-2}) \in k[x_0,x_1,x_n]^{n-2}_{m-1}$, we will only need to compute $p_1(\delta^{\calU}h(U_{01}))$ and $p_n(\delta^{\calU}h(U_{12}))$ and we do this now. Using the sections \eqref{sectionsNiT-1}, we have \begin{align*}
\delta^{\calU}h(U_{01}) = \iota_1(N_1^T)^{-1} h - \iota_0 (N_0^T)^{-1} h \\
\delta^{\calU}h(U_{12}) = \iota_2(N_2^T)^{-1} h - \iota_1(N_1^T)^{-1} h 
\end{align*} Since $p_1\iota_1$ and $p_n\iota_1$ are both $0$, we have \begin{align*}
p_1\delta^{\calU}h(U_{01}) =  -p_1 \iota_0 (N_0^T)^{-1} h \\
p_n\delta^{\calU}h(U_{12}) = p_n\iota_2(N_2^T)^{-1} h  
\end{align*} The right hand side can be computed with Cramer's rule: \begin{align}
p_1\delta^{\calU}h(U_{01}) =  \frac{-1}{f_0} \det [h \vert (N_0^T)_1 ] \label{p1U01}\\
p_n\delta^{\calU}h(U_{12}) = \frac{1}{f_2}\det [ (N_2^T)_{n-2}\vert h  ]   \label{pnU12}
\end{align} where $(N_0^T)_1$ denotes the matrix $(N_0^T)$ with its first column removed and $(N_2^T)_{n-2}$ denotes the matrix $(N_2^T)$ with its last column removed. In other words, $(N_0^T)_1 = [M_{ij}:\substack{i=2,\ldots,n-2 \\ j= 3,\ldots,n}]^T$ denotes the $n-2$ by $n-3$ matrix which is the transpose of the matrix obtained from $M$ by removing the first and last two rows of $M$. Similarly, $(N_2^T)_{n-2} = [M_{ij}:\substack{i=3,\ldots,n-1 \\ j= 3,\ldots,n}]^T$ denotes the $n-2$ by $n-3$ matrix which is the transpose of the matrix obtained from $M$ by removing the first two and last rows of $M$. We have now obtained the needed information from \v{C}ech cocycle representatives of the image of $\delta$.

{\bf Step 3.} Recall the isomorphism~\eqref{eq:L2detEdual=omegaP2} $\calL^{\otimes 2} \otimes \det \calE^* \cong \omega_{\Lambda}$ produced by the canonical relative orientation on the excess bundle $\calE$ discussed previously.  We next compute the restriction of this isomorphism to $U_{012}$. (The motivation for this is to assist in the computation of $\Tr$ of elements in $H^2(\wedge^2(\calE^* \otimes \calL)) \cong H^2( \omega_{\Lambda})$ represented by \v{C}ech cocycles for the cover $\calU$.) The short exact sequence~\eqref{SEScalE*calL} determines a canonical isomorphism \begin{equation}
\label{detisoofcalE*calLSES}\wedge^2 (\calE^* \otimes \calL) \otimes \wedge^{ n - 2} \calO(m-1)^{n-2} \cong \wedge^n \calO(m-2)^n
\end{equation} \cite[\S 1.3 p. 10]{Fasel-LecturesChowWitt}. The isomorphism \eqref{detisoofcalE*calLSES} evaluated at $U_{012}$ can be described using any local splitting $s$ of \eqref{SEScalE*calL}. 

\[
\theta:\wedge^2 (\calE^* \otimes \calL)(U_{012}) \stackrel{\cong}{\longrightarrow} (\wedge^n \calO(m-2)^n) \otimes ( \wedge^{ n - 2} \calO(1-m)^{n-2} )(U_{012}) \cong \calO(-3)(U_{012})
\] induced from \eqref{detisoofcalE*calLSES} is 
\[
\theta(\alpha \wedge \beta) = (\alpha \wedge \beta) \wedge (\wedge^{n-2} s(e_1 \wedge \ldots \wedge e_{n-2})) \otimes (e_1 \wedge \ldots \wedge e_{n-2})^{-1}
\] where $\{e_1, \ldots, e_{n-2} \}$ is a basis of $\calO(m-1)^{n-2}(U_{012})$. (In particular, one can check that this is indeed independent of the choice of local basis and splitting.) Choose the splitting to be $s = \iota_1(N_1^T)^{-1}$. The map 
\[
\wedge^{n-2} s: \wedge^{n-2} \calO(m-1)^{n-2} \to \wedge^{n-2}  \calO(m-2)^{n} 
\] of locally free sheaves on $U_1$ factors through the direct summand $$\wedge^{n-2} \iota_1: \calO(m-2)^{\otimes (n-2)} \hookrightarrow  \wedge^{n-2}  \calO(m-2)^{n}$$ corresponding to the inclusion $\iota_1: \calO(m-2)^{(n-2)} \to \calO(m-2)^n $ of all but the first and last summands. Moreover, $\wedge^{n-2}(N_1^T)^{-1} = f_1^{-1}$ by definition.  Thus $(\wedge^{n-2} s)(\gamma) = (\wedge^{n-2} \iota_1)(\frac{\gamma}{f_1})$

Let $p_{1n}: \calO(m-2)^{n} \to \calO(m-2)^{2}$ denote projection onto the first and last factors. The second wedge power $$\wedge^2 p_{1n}: \wedge^2 \calO(m-2)^{n} \to \wedge^2  \calO(m-2)^{2}\cong  \calO(m-2) \otimes \calO(m-2) \cong \calO(n-5) $$ likewise defines a projection onto the direct summand. With this notation, we may write
\begin{equation}\label{inducedorientationiso}
\theta(\alpha \wedge \beta)= \frac{-\wedge^2 p_{1n}(\alpha \wedge \beta)}{f_1}
\end{equation} Recall that we are identifying $\alpha$ and $\beta$ as elements of $\calO(m-2)^{n}$ using the canonical inclusion $\calE^* \otimes \calL\hookrightarrow \calO(m-2)^{n}$. (Note that the image under $\theta$ only depends on $p_1$ of the first factor $\alpha$ and $p_n$ of the second factor $\beta$. This will turn into the justification of the previous assertion that we only need to compute $p_1(\delta^{\calU}h(U_{01}))$ and $p_n(\delta^{\calU}h(U_{12}))$ for $h = (h_1,\ldots,h_{n-2})$ in $k[x_0,x_1,x_n]^{n-2}_{m-1}$.)

{\bf Step 4.} Having obtained the desired isomorphism \eqref{inducedorientationiso} between $\det (\calE^* \otimes \calL)$ and $\calO(-3)$, we turn to computing the trace. The canonical trace map $\Tr: H^2(\Lambda, \calO(-3)) \to k$ pulls back to $$\Tr^{\calU}: C^2(\calU; \calO(-3)) \cong \calO(-3)(U_{012}) \to k$$ which is described as follows \cite[Chapters 6,7,8]{Kunz-ResiduesDualityAlgVar}. We have by assumption that $\calU$ is a cover, whence the ring $k[x_0,x_1,x_2]/(f_0,f_1,f_2)$ is a finite $k$-algebra, isomorphic to its localization at $(x_0,x_1,x_2)$. The degree $3(n-3)$ component $(k[x_0,x_1,x_2]/(f_0,f_1,f_2))_{3(n-3)}$ of $\frac{k[x_0,x_1,x_2]}{(f_0,f_1,f_2)}$ is a one-dimensional $k$-vector space, generated by the socle $E$, defined by 
\[
E = \det (a_{ij}) \quad \quad f_i = \sum_{j=0}^2 a_{ij} x_j \text{ for }i=0,1,2,
\] which satisfies the property that \[
\Jac (f_0,f_1,f_2) = \det\left(\frac{\partial f_i}{\partial x_j}\right)_{ij} = \dim_k (k[x_0,x_1,x_2]/(f_0,f_1,f_2)) \cdot E.\] The composition 
\begin{equation}\label{TrU-/f0f1f2}
k[x_0,x_1,x_2]_{3(n-3)} \stackrel{\frac{1}{f_0f_1f_2}}{\longrightarrow} \calO(-3)(U_{012}) \stackrel{\Tr^{\calU}}{\longrightarrow} k
\end{equation} sends $E$ to $1$, and factors through the quotient $$ k[x_0,x_1,x_2]_{3(n-3)}  \to (k[x_0,x_1,x_2]/(f_0,f_1,f_2))_{3(n-3)}.$$ In other words, we have the following \cite[Proposition 6.3, Definition 7.3, Theorem 7.4, p. 72, Proposition 8.32]{Kunz-ResiduesDualityAlgVar}.

\begin{proposition}\label{pr:TrUcomputation}
Suppose $f_0,f_1,f_2$ in $k[x_0,x_1,x_2]_{n-2}$ are such that $\calU = \{U_0, U_1, U_2 \}$ with $U_i = \{ f_i \neq 0 \}$ $i=0,1,2$ is a cover of $\bbb{P}^2$. The trace map with respect to the \v{C}ech-cover $\calU$
 $$ \Tr^{\calU}\left(\frac{(-)}{f_0f_1f_2}\right): k[x_0,x_1,x_2]_{3(n-3)} \to k $$ is the $k$-linear map determined by sending $(f_0, f_1,f_2)_{3(n-3)}$ to $0$ and $E$ to $1$. 
 \end{proposition}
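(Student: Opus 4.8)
The plan is to pin down the $k$-linear map in \eqref{TrU-/f0f1f2}, that is $\Tr^{\calU}\!\bigl(\tfrac{(-)}{f_0f_1f_2}\bigr)\colon k[x_0,x_1,x_2]_{3(n-3)}\to k$, by showing it kills $(f_0,f_1,f_2)_{3(n-3)}$ and sends $E$ to $1$; since $k[x_0,x_1,x_2]_{3(n-3)}/(f_0,f_1,f_2)_{3(n-3)}\cong\bigl(k[x_0,x_1,x_2]/(f_0,f_1,f_2)\bigr)_{3(n-3)}$ is $1$-dimensional with basis the class of $E$ (as recorded just before the statement), those two facts determine the map and give the claim.

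For the vanishing on the ideal, I would first observe that each $U_i=\{f_i\neq 0\}$ is the complement in $\bbb{P}^2$ of a hypersurface of positive degree $n-2$, hence affine, and that the same holds for $U_{ij}$ and $U_{012}$ since $\bbb{P}^2$ is separated; thus $\calU$ is acyclic for $\calO(-3)$, its \v{C}ech complex (which has no cochains in degree $>2$, as $\calU$ has three members) computes $H^\bullet(\bbb{P}^2,\calO(-3))$, and $H^2(\calU;\calO(-3))$ is $\calO(-3)(U_{012})$ modulo $2$-coboundaries. By construction $\Tr^{\calU}$ is the composite of the quotient map $\calO(-3)(U_{012})\to H^2(\calU;\calO(-3))\cong H^2(\bbb{P}^2,\calO(-3))$ with the Grothendieck--Serre trace, so it annihilates $2$-coboundaries. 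Finally, for $g=g_0f_0+g_1f_1+g_2f_2\in(f_0,f_1,f_2)_{3(n-3)}$ the section
\[
\frac{g}{f_0f_1f_2}=\frac{g_0}{f_1f_2}+\frac{g_1}{f_0f_2}+\frac{g_2}{f_0f_1}
\]
of $\calO(-3)$ over $U_{012}$ is a $2$-coboundary --- each summand $g_i/(f_jf_k)$, with $\{i,j,k\}=\{0,1,2\}$, being a regular section of $\calO(-3)$ over the double overlap $U_{jk}$ --- and so is killed by $\Tr^{\calU}$.

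For the value on the socle, I would invoke \cite[Chapters 6--8]{Kunz-ResiduesDualityAlgVar}. The mechanism is that, via graded local duality for $k[x_0,x_1,x_2]$ at its irrelevant maximal ideal, the degree $-3$ part of the \v{C}ech description of the Grothendieck--Serre trace is identified with the $0$-dimensional Grothendieck residue at the vertex of the affine cone over $\bbb{P}^2$ with respect to the system of parameters $f_0,f_1,f_2$; under this identification $\Tr^{\calU}(g/(f_0f_1f_2))$ becomes that residue, which in particular factors through $k[x_0,x_1,x_2]/(f_0,f_1,f_2)$. Applying the transformation rule for local residues to the relations $f_i=\sum_j a_{ij}x_j$, the residue of $E=\det(a_{ij})$ equals the residue of $1$ with respect to $x_0,x_1,x_2$, which is $1$; hence $\Tr^{\calU}(E/(f_0f_1f_2))=1$, as needed.

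The first part is a routine manipulation with \v{C}ech cochains, and the second is essentially a citation; the real work --- done in the cited chapters of \cite{Kunz-ResiduesDualityAlgVar} --- is matching the global \v{C}ech description of the Serre-duality trace on $\bbb{P}^2$ with the purely algebraic local residue at the cone point, and then evaluating that residue on the socle generator, all with enough bookkeeping of signs and normalizations to land exactly on the value $1$ rather than on a unit multiple of it. This is where I expect the main difficulty to lie.
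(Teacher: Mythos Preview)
Your proposal is correct and follows essentially the same approach as the paper, which simply cites \cite[Proposition 6.3, Definition 7.3, Theorem 7.4, p.~72, Proposition 8.32]{Kunz-ResiduesDualityAlgVar} without further argument. Your expansion---the \v{C}ech coboundary argument for vanishing on $(f_0,f_1,f_2)_{3(n-3)}$ and the appeal to the transformation rule for residues to get $E\mapsto 1$---unpacks exactly the content being cited, and you are right that the delicate point (matching the global trace with the local residue and tracking the normalization) is where the substance lies in Kunz.
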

 
 The Macaulay2 program CDTr given in \S\ref{Section:M2code} computes the function \eqref{TrU-/f0f1f2} of Proposition \ref{pr:TrUcomputation}. It is a modification of S. Pauli's program \cite{pauli2020computing} to compute the EKL class.

Combining the above descriptions of $\Tr^{\calU}$ and $\theta$ from Steps 4 and 3 respectively, we may explicitly compute the pairing $H^1(\Lambda, \calE^*) \times H^1(\Lambda, \calE^*) \to k$ equal to $n^{\bbb{A}^1}(\calE)$ in $W(k)$. Since $H^1(\Lambda, \calE^*)$ is a quotient of $k[x_0,x_1,x_2]^{n-2}_{m-1}$, this amounts to explicitly computing the pairing $$\BH': k[x_0,x_1,x_2]^{n-2}_{m-1} \times k[x_0,x_1,x_2]^{n-2}_{m-1} \to k $$ defined by $$(h,h') \mapsto \Tr( \delta(h) \cup \delta(h')).$$ Using our explicit \v{C}ech cocycles for $\delta(h)$ and $\delta(h')$, the cup product $ \delta(h) \cup \delta(h')$ is represented by a \v{C}ech cocycle consisting of the data of the element $$\delta^{\calU}h(U_{01})\vert_{U_{012}} \wedge \delta^{\calU}h'(U_{12})\vert_{U_{012}}  $$ of $(\calE^* \otimes \calL) \wedge (\calE^* \otimes \calL) (U_{012}) \hookrightarrow \calO(m-2)^n \wedge \calO(m-2)^n (U_{012})$. Combining equations \eqref{inducedorientationiso},  \eqref{p1U01}, and \eqref{pnU12}, we conclude that 
\begin{equation}\label{eq:Tr_cup}
 B'(h,h')=\Tr( \delta(h) \cup \delta(h')) = \Tr^{\calU} \left(\frac{\det [h \vert (N_0^T)_1] \det [ (N_2^T)_{n-2}\vert h'  ]}{f_0f_1f_2}  \right).
 \end{equation}

\begin{remark}
Note that each $f_i$ is of degree $n-2$ and  $\det [h \vert (N_0^T)_1]$ and $\det [ (N_2^T)_{n-2}\vert h'  ]$ are each of degree $n-3+m-1$. Thus $\frac{1}{f_0f_1f_2} \det [h \vert (N_0^T)_1] \det [ (N_2^T)_{n-2}\vert h'  ]$ is of degree $-3(n-2) + 2(n-3) + 2(m-1) = -n + n-1 -2 = -3$ as required.
\end{remark}

We express \eqref{eq:Tr_cup} in terms of $M$ as follows. Let $M^l_{i,j,v}$ denote the matrix $M$ with rows $i$, $j$, and $v$ removed along with column $l$. For $h$ in $k[x_0,\ldots,x_n]_{m-1}$ and $i=1,\ldots, n-2$, let $h_i$ denote the element of $k[x_0,x_1,x_2]^{n-2}_{m-1}$ with $h$ in the $i$th summand and $0$ elsewhere. The pairing $B'$ is then given $$(h_v,h_l') \mapsto  \Tr^{\calU} \left(\frac{(-1)^{v-1+n-2-1+l-1}h h'}{f_0f_1f_2} \det M^v_{1,n-1,n} \det M^{l}_{1,2,n} \right) $$ or more succinctly ($n$ is odd) \begin{equation}\label{answer}(h_v,h_l') \mapsto  \Tr^{\calU} \left(\frac{(-1)^{v+l}h h'}{f_0f_1f_2} \det M^v_{1,n-1,n} \det M^{l}_{1,2,n} \right)  \end{equation} for $h,h' \in k[x_0,\ldots,x_n]_{m-1}$. 

For $h$ in the image of $M^T: k[x_0,\ldots,x_n]^n_{m-2} \to k[x_0,\ldots,x_n]^{n-2}_{m-1}$, we have that $\delta h = 0$ in $H^1(\calE^* \otimes \calL)$. It follows that the pairing $\BH'$ computed by \eqref{answer} determines a pairing \begin{equation}\label{BilinearformonH1}\BH: \frac{k[x_0,x_1,x_2]^{n-2}_{m-1}}{M^T(k[x_0,x_1,x_2]^n_{m-2})} \times \frac{k[x_0,\ldots,x_n]^{n-2}_{m-1}}{M^T(k[x_0,\ldots,x_n]^n_{m-2})}  \to k \end{equation} by the same formula. We gather the above discussion into the following.

\begin{proposition}\label{A1Eulernumber_excess_bundle_n_quadrics_containingP2}
Let $k$ be a field of characteristic not $2$ and let $n$ be an odd positive integer. Let $Q_1,\ldots, Q_n$ in $k[x_0, x_1, \ldots, x_n]$ be homogeneous degree $2$ polynomials vanishing on $$\Lambda = \{ x_3 = x_4 = \ldots = x_n  =0\} \cong \bbb{P}^2 \hookrightarrow \bbb{P}^n \cong \Proj k[x_0, x_1, \ldots, x_n].$$ Define $M$ to be the $n$ by $n-2$ matrix of \eqref{Mdef}, i.e. $M$ describes the canonical map of vector bundles $N_{\Lambda} \bbb{P}^n \cong \oplus_{i=3}^n \calO(1) \stackrel{M}{\to} \oplus_{i=1}^n \calO(2)\vert_{\Lambda}$ associated to the closed immersion $\Lambda \hookrightarrow \{ Q_i = 0; i =1,\ldots,n\} $. Let $f_0$, $f_1$ and $f_2$ be the determinants of the principal minors of $M$ defined in \eqref{fdef}. Assume that $\cap_{i=0}^2 \{ f_i \neq 0\} = \emptyset$, which will occur for a general choice of  $Q_1,\ldots, Q_n$ vanishing on $\Lambda$. 

Then $\calE : = \coker M$ is a relatively oriented vector bundle on $\Lambda$, called the excess bundle. 

Define the bilinear form \[
\BH': k[x_0,x_1,x_2]^{n-2}_{m-1} \times k[x_0,x_1,x_2]^{n-2}_{m-1} \to k
\] by \eqref{answer} where $m=(n-1)/2$. Then $\BH'$ vanishes on $M^T(k[x_0,x_1,x_2]^n_{m-2})$ and descends to a non-degenerate, symmetric bilinear form $$[\BH]:\frac{k[x_0,x_1,x_2]^{n-2}_{m-1}}{M^T(k[x_0,x_1,x_2]^n_{m-2})} \times \frac{k[x_0,\ldots,x_n]^{n-2}_{m-1}}{M^T(k[x_0,\ldots,x_n]^n_{m-2})}  \to k,$$ and the $\bbb{A}^1$-Euler number of the excess bundle $\calE$ is given by the equality in $\GW(k)$\[
n^{\bbb{A}^1}(\calE) =[\BH] + \frac{1}{2}\left({n \choose 2} + n + 1 - \left( (n-2) {m-1+n\choose n} - n {m-2+n\choose n}\right)\right)(\lra{1}+ \lra{-1}).
\]  In particular, the image of $n^{\bbb{A}^1}(\calE)$ in $\W(k)$ is given by $[\BH]$.

\end{proposition}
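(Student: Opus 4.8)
The plan is to assemble Proposition \ref{A1Eulernumber_excess_bundle_n_quadrics_containingP2} from the four steps that have already been worked out in the body of the section, together with a bookkeeping computation for the hyperbolic correction term. First I would recall the Grothendieck--Serre description of $n^{\bbb{A}^1}(\calE)$ given above: it is the sum over $k$ of the pairing \eqref{middlenGS} on $H^1(\calE^* \otimes \calL)$ and the hyperbolic pairing on $H^0(\calO \otimes \calL) \oplus H^2(\wedge^2 \calE^* \otimes \calL)$. The latter summand is visibly metabolic (the two summands are isotropic and paired perfectly against each other), hence represents $\frac{r}{2}(\lra{1}+\lra{-1})$ in $\GW(k)$ where $r = \dim_k H^0(\calO\otimes\calL) = \dim_k H^2(\wedge^2\calE^*\otimes\calL)$; so the Witt class of $n^{\bbb{A}^1}(\calE)$ is exactly the Witt class of \eqref{middlenGS}. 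This already gives the last sentence of the proposition once the first assertions are established.

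Next I would verify that $\BH'$ as defined by \eqref{answer} is exactly the pairing \eqref{middlenGS} transported along the surjection $k[x_0,x_1,x_2]^{n-2}_{m-1} \twoheadrightarrow H^1(\calE^*\otimes\calL)$ of Step 1. This is the content of Equation \eqref{eq:Tr_cup} and the reindexing \eqref{answer}: the cup product of the two \v{C}ech $1$-cocycles $\delta(h), \delta(h')$ is the $2$-cocycle $\delta^{\calU}h(U_{01})\wedge\delta^{\calU}h'(U_{12})$ on $U_{012}$; applying $\theta$ from \eqref{inducedorientationiso} (which only sees $p_1$ of the first factor and $p_n$ of the second) and then $\Tr^{\calU}$ from Proposition \ref{pr:TrUcomputation} gives \eqref{eq:Tr_cup}, and Cramer's rule expansions \eqref{p1U01}, \eqref{pnU12} rewrite the minors $(N_0^T)_1$, $(N_2^T)_{n-2}$ in terms of the rows of $M$, producing the signs $(-1)^{v+l}$ and the $3\times 3$-deleted minors $M^v_{1,n-1,n}$, $M^l_{1,2,n}$. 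Symmetry of $[\BH]$ then follows from symmetry of the cup-product pairing \eqref{middlenGS} (graded-commutativity together with the sign coming from $\calO(m-2)^n\wedge\calO(m-2)^n$, with $n$ odd), and non-degeneracy follows from non-degeneracy of \eqref{middlenGS}, which is Grothendieck--Serre duality; that $\BH'$ kills $M^T(k[x_0,x_1,x_2]^n_{m-2})$ is immediate since $\delta$ kills that subspace by the long exact sequence. So $[\BH]$ represents the pairing \eqref{middlenGS}.

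It remains to compute the rank $r$ of the hyperbolic correction, i.e.\ $\dim_k H^0(\Lambda, \calO\otimes\calL)$, equivalently (by Serre duality on $\Lambda\cong\P^2$) $\dim_k H^2(\Lambda,\wedge^2\calE^*\otimes\calL)$. Since $\calL\cong\calO(m)$ on $\P^2$, this is $\dim_k k[x_0,x_1,x_2]_m = \binom{m+2}{2}$. But the stated correction is $\frac12\big(\binom{n}{2}+n+1 - ((n-2)\binom{m-1+n}{n} - n\binom{m-2+n}{n})\big)$, so I would not just quote $\binom{m+2}{2}$; instead I would obtain the correction by comparing the two expressions for $n^{\bbb{A}^1}(\calE)$ \emph{as ranks}. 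Taking ranks in $\GW(k)$, $\Rrank n^{\bbb{A}^1}(\calE)$ equals the classical Euler number $e(\calE)$, which by multiplicativity of total Chern classes applied to \eqref{SESexcessbundle} is $\binom{n}{2}+n+1$ minus something — concretely $\Rrank n^{\bbb{A}^1}(\calE) = 2^n - \binom{n}{n-2}-\binom{n}{n-1}-\binom{n}{n} = \binom n2 + n + 1$ using $2^n = \sum\binom nk$; meanwhile $\Rrank[\BH] = \dim_k \big(k[x_0,x_1,x_2]^{n-2}_{m-1}/M^T(\cdots)\big)$. Since $M^T$ is generically injective on $\P^2$ and Step 1 identifies $H^1(\calE^*\otimes\calL)$ with this cokernel while $H^0$ and $H^2$ terms in the long exact sequence of \eqref{SEScalE*calL} give the correction, I would run the long exact sequence for \eqref{SEScalE*calL} degree by degree: $H^*(\P^2, \calO(m-1)^{n-2})$ and $H^*(\P^2,\calO(m-2)^n)$ have explicit dimensions (e.g.\ $h^0(\calO(d)) = \binom{d+2}{2}$, $h^2(\calO(d)) = \binom{-d-1}{2}$ for the relevant $d$), and solving for $\dim H^1(\calE^*\otimes\calL)$ and $\dim H^0(\calO\otimes\calL)$ yields the terms $(n-2)\binom{m-1+n}{n}$ and $n\binom{m-2+n}{n}$ — here the appearance of $\binom{\cdot}{n}$ rather than $\binom{\cdot}{2}$ signals that these count dimensions of spaces of polynomials in all $n+1$ variables, coming from the global sections over $\P^n$ before restricting. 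The arithmetic that these Euler-characteristic bookkeeping terms collapse to exactly the claimed expression is the step I expect to be the main (if routine) obstacle, since it requires carefully tracking which cohomology dimensions enter with which sign and reconciling the $\P^2$ description of the cocycles with the $\P^n$-homogeneous-polynomial indexing used in \eqref{answer}. Once $r$ is pinned down, writing $n^{\bbb{A}^1}(\calE) = [\BH] + \frac r2(\lra1+\lra{-1})$ completes the proof, and the Witt-group statement follows by killing the hyperbolic summand.
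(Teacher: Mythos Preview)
Your approach is essentially the paper's: invoke the Grothendieck--Serre description, identify the middle pairing \eqref{middlenGS} with $\BH$ via Steps 1--4, observe the outer pairing is metabolic, and determine the hyperbolic coefficient by comparing ranks. The paper's proof is however much more direct on the last point. It simply writes
\[
n^{\bbb{A}^1}(\calE) = [\BH] + \tfrac{1}{2}\bigl(\Rrank n^{\bbb{A}^1}(\calE) - \Rrank \BH\bigr)(\lra{1}+\lra{-1}),
\]
quotes $\Rrank n^{\bbb{A}^1}(\calE) = \binom{n}{2}+n+1$ from \cite[Proposition 13.5]{EisenbudHarris}, and computes $\Rrank \BH$ as the dimension of the quotient vector space. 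No long exact sequence bookkeeping, no reconciliation of $\P^2$ versus $\P^n$ indexing, is needed.

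Two slips in your write-up are worth flagging. First, a metabolic form on $H^0(\calL)\oplus H^2(\wedge^2\calE^*\otimes\calL)$ with each summand of dimension $r$ is $r\,(\lra{1}+\lra{-1})$, not $\tfrac{r}{2}(\lra{1}+\lra{-1})$; this does not affect your rank-comparison argument, which is independent of that slip. Second, the identity you use,
\[
2^n - \tbinom{n}{n-2}-\tbinom{n}{n-1}-\tbinom{n}{n} \;=\; \tbinom{n}{2}+n+1,
\]
is false for $n\neq 5$ (the left side is $\sum_{k\le n-3}\binom{n}{k}$): the quantity $2^n-\binom{n}{2}-n-1$ is the number $|\Gamma|$ of residual points, whereas $\Rrank n^{\bbb{A}^1}(\calE)=\binom{n}{2}+n+1$ is the classical Euler number of $\calE$. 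Just cite the latter directly, as the paper does.
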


\begin{proof}
We use the description of $ n^{\bbb{A}^1}(\calE)$ in \cite[Theorem 1.1 p. 3 Corollary, Definition 2.1]{BW-A1Eulerclasses} as recalled above. It follows that the image of $n^{\bbb{A}^1}(\calE)$ in $\W(k)$ is given by
\[H^1(\wedge^{i} \calE^* \otimes \calL) \times H^1(\wedge^{i} \calE^* \otimes \calL) \stackrel{\cup}{\to} H^r(\wedge^r \calE^* \otimes \calL^{\otimes 2}) \cong H^2(\omega_{\Lambda}) \stackrel{\Tr}{\to} k \] as likewise discussed above. We have just computed this pairing to be isomorphic to $\BH$. The value of $n^{\bbb{A}^1}(\calE)$ in $\GW(k)$ is the sum of $[\BH]$ and an integer multiple of $(\lra{1}+ \lra{-1})$. Explicitly
\[
n^{\bbb{A}^1}(\calE) = [\BH] + \frac{1}{2}\left(\Rrank(n^{\bbb{A}^1}(\calE)) - \Rrank(\BH)\right)(\lra{1}+ \lra{-1})
\] where $\Rrank: \GW(k) \to \Z$ denotes the rank function on the Grothendieck--Witt group. $\Rrank(n^{\bbb{A}^1}(\calE))$ is the Euler number in Chow of $\calE$ which is ${n \choose 2} + n + 1 $ by \cite[Proposition 13.5]{EisenbudHarris}. Then note that 
\begin{align*}
\Rrank (\BH) & = \dim  k[x_0,\ldots,x_n]^{n-2}_{m-1} - \dim k[x_0,\ldots,x_n]^n_{m-2}\\
& = (n-2) {m-1+n\choose n} - n {m-2+n\choose n}.
\end{align*}
\end{proof}

Proposition~\ref{A1Eulernumber_excess_bundle_n_quadrics_containingP2} leads to a quadratically enriched count of the intersection points of $n$ general quadrics containing a plane. Recall the transfer map on Grothendieck--Witt groups $\Tr_{E/L}: \GW(E) \to \GW(L)$ for a finite degree separable extension of fields $L \subseteq E$ of Equation~\eqref{eq:defn:tr} or more generally \cite[Definition 4.28]{A1-alg-top}. Suppose $Q_1, \ldots, Q_n \in k[x_0,\ldots,x_n]_2$, so $(Q_1, \ldots, Q_n)$ defines a section $\sigma$ of $\calO_{\bbb{P}^n}(2)^n$. For $p$ an \'etale zero of $(Q_1, \ldots, Q_n)$, let $ \Jac_p (Q_1, \ldots, Q_n)$ denote the Jacobian determinant of Section~\ref{subsection:notation_conventions} Equation~\ref{eq:Jac_ind_computation}.

\begin{theorem}\label{thm:conics_containing_P2}
Let $k$ be a perfect field of characteristic not $2$ and let $n$ be an odd positive integer. Let $Q_1,\ldots, Q_n$ in $k[x_0, x_1, \ldots, x_n]$ be homogeneous degree $2$ polynomials vanishing on $$\Lambda = \{ x_3 = x_4 = \ldots x_n  =0\} \cong \bbb{P}^2 \hookrightarrow \bbb{P}^n \cong \Proj k[x_0, x_1, \ldots, x_n].$$ For a general choice of such $Q_1,\ldots, Q_n$, we have $\cap_{i=1}^n  \{ Q_i = 0\}= \Lambda \coprod \Gamma $ where the scheme $\Gamma$ is \'etale over $k$ and 
\[
\sum_{p \in \Gamma} \Tr_{k(p)/k} \Jac_p (Q_1, \ldots, Q_n) = 2^{n-1}(\lra{1}+ \lra{-1}) - n^{\bbb{A}^1}(\calE).
\] where $n^{\bbb{A}^1}(\calE)$ is as in Proposition~\ref{A1Eulernumber_excess_bundle_n_quadrics_containingP2}.
\end{theorem}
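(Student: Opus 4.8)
The plan is to combine the $\bbb{A}^1$-excess intersection formula with the computation of $n^{\bbb{A}^1}(\calE)$ in Proposition~\ref{A1Eulernumber_excess_bundle_n_quadrics_containingP2}. First I would record that $\oplus_{i=1}^n \calO(2)$ is relatively oriented (via the canonical isomorphism $\calO(2) \cong \calO(1)^{\otimes 2}$), so that it has a well-defined $\bbb{A}^1$-Euler number $n^{\bbb{A}^1}(\oplus_{i=1}^n \calO(2)) \in \GW(k)$. I would then compute this number: since $\oplus_{i=1}^n \calO(2) \cong \oplus_{i=1}^n \calO(1)^{\otimes 2}$ has a section with no zeros over nothing — rather, one uses that the $\bbb{A}^1$-Euler number of $\calO(2d)$-type bundles of top rank on $\bbb{P}^n$ is hyperbolic because the orientation line bundle has even-degree twist. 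Concretely, $n^{\bbb{A}^1}(\oplus_{i=1}^n \calO(2)) = 2^{n-1}(\lra{1}+\lra{-1})$, whose rank is $2^n = \deg \prod_{i=1}^n \calO(2)$ and whose Witt class vanishes; this follows e.g. from \cite[Section 4]{McKean-Bezout} or directly from the observation that the form on coherent cohomology representing it is metabolic (one can also see it is hyperbolic since $\calO(2)^n$ on $\bbb{P}^n$ admits a section — a suitable tuple of squares of linear forms — whose local indices at the resulting $2^n$ $k$-points are all squares, so the sum lies in the image of $\Tr$ from products of $\lra{1}$'s, but the cleanest argument is the parity of the twist).

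Next I would invoke the $\bbb{A}^1$-excess intersection formula \cite[Remark 5.22(2)]{BW-A1Eulerclasses}, which says that for the section $\sigma = (Q_1,\ldots,Q_n)$ of $\oplus_{i=1}^n \calO(2)$ whose zero locus is $\Lambda \coprod \Gamma$, the $\bbb{A}^1$-Euler number decomposes as the sum of the local contribution at $\Lambda$ — which is precisely $n^{\bbb{A}^1}(\calE)$ for the excess bundle $\calE = \coker(N_\Lambda\bbb{P}^n \to \oplus_{i=1}^n\calO(2)|_\Lambda)$ with its induced relative orientation — and the sum of the local indices at the points of $\Gamma$:
\[
n^{\bbb{A}^1}\Big(\oplus_{i=1}^n \calO(2)\Big) = n^{\bbb{A}^1}(\calE) + \sum_{p \in \Gamma} \ind_p \sigma.
\]
Here one must check that the hypotheses of the excess formula are met: $\Lambda$ is a smooth closed subscheme of the expected behavior, the induced map $N_\Lambda\bbb{P}^n \to \oplus_{i=1}^n\calO(2)|_\Lambda$ is injective (so $\calE$ is a genuine vector bundle of rank $2$ on $\Lambda \cong \bbb{P}^2$, as in \eqref{SESexcessbundle}), and the orientations are compatible — exactly the compatibility spelled out in \S\ref{section:ConicsP2n+1vanishP2} using $\det T\Lambda \otimes \det N_\Lambda\bbb{P}^n \cong \det T\bbb{P}^n$ — so that the formula applies with the relative orientation on $\calE$ used in Proposition~\ref{A1Eulernumber_excess_bundle_n_quadrics_containingP2}.

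Finally, for the points $p \in \Gamma$, which form an étale $k$-scheme for general $Q_i$, each is a simple zero of $\sigma$, so by Equation~\eqref{eq:indpsigma_etale} together with Equation~\eqref{eq:Jac_ind_computation} the local index is $\ind_p\sigma = \Tr_{k(p)/k}\lra{\Jac_p(Q_1,\ldots,Q_n)}$, and I would write $\Jac_p(Q_1,\ldots,Q_n)$ for the corresponding class in $\GW(k(p))$ as in the statement. Substituting this and the value $2^{n-1}(\lra{1}+\lra{-1})$ into the displayed decomposition and solving for the sum over $\Gamma$ yields
\[
\sum_{p\in\Gamma} \Tr_{k(p)/k}\Jac_p(Q_1,\ldots,Q_n) = 2^{n-1}(\lra{1}+\lra{-1}) - n^{\bbb{A}^1}(\calE),
\]
which is the asserted formula. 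The main obstacle is the bookkeeping around orientations: verifying that the relative orientation on $\calE$ appearing in the excess formula is literally the one fixed in Proposition~\ref{A1Eulernumber_excess_bundle_n_quadrics_containingP2}, and confirming the parity computation giving $n^{\bbb{A}^1}(\oplus_{i=1}^n\calO(2)) = 2^{n-1}(\lra{1}+\lra{-1})$ with the correct sign/multiplicity — everything else is a direct substitution into a formula already established in the excerpt.
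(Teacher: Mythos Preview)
Your proposal is correct and follows essentially the same approach as the paper: the theorem is obtained by combining the $\bbb{A}^1$-excess intersection formula \cite[Remark 5.22(2)]{BW-A1Eulerclasses} (which splits $n^{\bbb{A}^1}(\oplus_{i=1}^n \calO(2))$ into $n^{\bbb{A}^1}(\calE)$ plus the sum of local indices over $\Gamma$), the identification of those local indices as transferred Jacobians via \eqref{eq:indpsigma_etale} and \eqref{eq:Jac_ind_computation}, and the fact that $n^{\bbb{A}^1}(\oplus_{i=1}^n \calO(2))$ is hyperbolic of rank $2^n$. The paper treats the theorem as an immediate consequence of the discussion in \S\ref{section:ConicsP2n+1vanishP2} and Proposition~\ref{A1Eulernumber_excess_bundle_n_quadrics_containingP2}, asserting without elaboration that $n^{\bbb{A}^1}(\oplus_{i=1}^n \calO(2))$ is a multiple of the hyperbolic form; your sketch of why this holds (via McKean or via the Grothendieck--Serre description) fills in what the paper leaves implicit.
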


This computation is entirely explicit and can be implemented on the computer for a fixed $n$. (To see this, note that in the definition \eqref{answer} of $B'$, the map $\Tr^{\calU}$ is computed explicitly by Proposition \ref{pr:TrUcomputation}.)

\begin{example}\label{n=5_quadrics_vanishingP2_explicit}
For $n=5$, we have quadrics $(Q_1,\ldots, Q_5)$ in $k[x_0,\ldots,x_5]^5$ vanishing on $\Lambda$. As above, define the matrix $M$ by $$M_{ij} = \tilde{M}_{ij}(x_0,x_1,x_2,0,\ldots,0)$$ where $Q_i = \sum_{j=3}^n \tilde{M}_{ij} x_j.$ In this example, $m=2$ and we give a more explicit formula for the pairing $$\BH': k[x_0,x_1,x_2]^{3}_{1} \times k[x_0,x_1,x_2]^{3}_{1} \to k $$  $$\BH' \cong \BH \oplus 5\lra{0}$$ of Proposition~\ref{A1Eulernumber_excess_bundle_n_quadrics_containingP2} and Theorem \ref{thm:conics_containing_P2}.  The matrix $M$ is of the form 
\[
M = \begin{bmatrix}
M_{13} & M_{14} & M_{15} \\
M_{23} & M_{24} & M_{25} \\
M_{33} & M_{34} & M_{35} \\
M_{43} & M_{44} & M_{45} \\
M_{53} & M_{54} & M_{55} 
\end{bmatrix}
\] Define $F_i= (-1)^i \det M^i_{1,n-1,n}$ and $G_i = (-1)^i \det M^{i}_{1,2,n}$ for $i=3,4,5$, so $F_i$ and $G_i$ are the following 
\[
F_3 =  -\begin{vmatrix}
M_{24} & M_{25} \\
 M_{34} & M_{35}
\end{vmatrix} \quad \quad F_4 =  \begin{vmatrix}
M_{23} & M_{25} \\
M_{33} & M_{35}
\end{vmatrix} \quad \quad F_5 = - \begin{vmatrix}
M_{23} & M_{24}  \\
M_{33} & M_{34}
\end{vmatrix} 
\]

\[
G_3 = - \begin{vmatrix}
M_{34} & M_{35}  \\
M_{44} & M_{45}
\end{vmatrix} \quad \quad G_4 =  \begin{vmatrix}
M_{33} & M_{35}\\
 M_{43} & M_{45}
\end{vmatrix} \quad \quad G_5 = - \begin{vmatrix}
M_{33} &  M_{34}\\
 M_{43}& M_{44}
\end{vmatrix} 
\]

Let $\{e_3,e_4, e_5\}$ denote the standard basis of  $k[x_0,x_1,x_2]^{3}_{1}$ as a $k[x_0,x_1,x_2]_{1}$-module. Then for any linear forms $x,x'$ in $k[x_0,x_1,x_2]_1$, and $i,j \in \{3,4,5\}$, we have 
\begin{equation}
\BH'(x e_i, x' e_j) = \Tr^{\calU}(\frac{x x'}{f_0 f_1 f_2} F_i G_j)
\end{equation}
Note that $\Tr^{\calU}$ is computed explicitly by Proposition~\ref{pr:TrUcomputation}.
\end{example}

$\BH'$ for $n=5$ can be computed with the Macaulay2 code of Appendix \ref{Section:M2code}. It is not difficult to find $n$-tuples of quadrics over a fixed field $k$ such that $\cap_{i=0}^2 \{ f_i \neq 0\} = \emptyset$. Using Proposition~\ref{A1Eulernumber_excess_bundle_n_quadrics_containingP2}, Theorem~\ref{thm:conics_containing_P2} and Macaulay2 it is likewise straightforward to compute the $\bbb{A}^1$-Euler numbers of the excess bundles. These $\bbb{A}^1$-Euler numbers depend on the choice of quadrics. 

\begin{example}\label{F31squaredisc}

Let $k = \bb F_{31}$. For quadrics $(Q_1,\ldots, Q_5)$ in $k[x_0,\ldots,x_5]^5$ vanishing on $\Lambda$, the $\bbb{A}^1$-Euler number of the excess bundle is determined by Example~\ref{n=5_quadrics_vanishingP2_explicit} or Proposition \ref{A1Eulernumber_excess_bundle_n_quadrics_containingP2}. To simplify notation let $x=x_0$, $y=x_1$ and $z=x_2$. Suppose $$M =  \begin{bmatrix} 
-15x-8y-14z & -2x-2y+10z & 12x-5y-10z \\
-8x-14y-2z & 10x-12y+13z &-15x-15y+13z\\
5x+11y+14z & -11x-10y-5z & 14x-2y+3z \\
7x-2y-4z  &  -10x+10z   & 4x-5y-4z\\
5x-10y+2z &  -12x+6y+7z & 8x-10y-11z\\
\end{bmatrix}.$$ This $M$ corresponds to $(Q_1,\ldots, Q_5)$ satisfying the hypothesis of Proposition~\ref{A1Eulernumber_excess_bundle_n_quadrics_containingP2}.

Then the bilinear form $\BH: k[x,y,z]^{3}_{1} \times k[x,y,z]^{3}_{1} \to k$  is given by the matrix $$ \begin{bmatrix}
-9 &15& 7 &  5 & -8 & 1 &  6 & -5 & 3 \\
15 & -1 &-9 & -8& -4 & 1 &  -5& 6 &  7\\
7 & -9 & -14 &1 & 1 &  6 &  3 & 7 &  -9\\
5 & -8 &1 &  -7& 0 &  1 &  2 & 13 & 2\\
-8 &-4 &1 &  0 &  -3 & 0 &  13 & -9 & -12\\
1 & 1 & 6 &  1 & 0 &  -3 & 2 & -12 &10\\
6 & -5 & 3 &  2 & 13 & 2 &13 & 9 &  4\\
 -5 &6 & 7 &  13 & -9 & -12 & 9 & 15 & 2  \\
3 & 7 & -9 & 2 & -12 & 10 & 4 & 2 &  9\\
\end{bmatrix}$$

Diagonalizing this matrix and removing the $5$ dimensional subspace on which it is trivial, we have that the $\bbb{A}^1$-Euler number of the excess bundle is $$\lra{-9}+\lra{-7}+\lra{-10}+\lra{-1} = \lra{-1}+\lra{-7}+\lra{-10}+\lra{-1}.$$
Macaulay2 checks that $70$ is a square mod $31$,
so the $\bbb{A}^1$-Euler number of the excess bundle is $$2(\lra{1}+\lra{-1}).$$

\end{example}

\begin{example}\label{F31nonsquaredisc}
Let $k = \bb F_{31}$. The quadrics $(Q_1,\ldots, Q_5)$ vanishing on $\Lambda$ corresponding to $$M =  \begin{bmatrix} 
3x-8y  &   -12x+9y+3z &  4x-12y+6z \\
2x-7y+8z  &   9x+2y-5z   &  14x-9y-4z\\
-11x-3y-2z  & 4x-5y+10z   & -13x+8y+8z  \\
-11x+15y+15z & -15x+10y-14z & 13x+10y+11z\\
-15x+14y-8z & 8x-6y+3z & 8x-3y-3z \\
\end{bmatrix},$$ satisfy the hypothesis of Proposition~\ref{A1Eulernumber_excess_bundle_n_quadrics_containingP2}. The bilinear form $\BH': k[x,y,z]^{3}_{1} \times k[x,y,z]^{3}_{1} \to k$ is computed with Macaulay2 to be $\BH' = \lra{-2} + \lra{3} + \lra{-14} + \lra{4} + 5\lra{0}$.  We have that the $\bbb{A}^1$-Euler number of the excess bundle is $$\lra{-2}+\lra{3}+\lra{-14}+\lra{1} $$ which has nonsquare discriminant. 
\end{example}

Examples \ref{F31squaredisc} and \ref{F31nonsquaredisc} imply:

\begin{corollary}\label{co:intersection_general_conics_containing_P2_non_invariance_number}
The quadratically enriched count of the points of $\Lambda$ given $$\sum_{p \in \Gamma} \Tr_{k(p)/k} \Jac_p (Q_1, \ldots, Q_n)$$ does not satisfy invariance of number, although its classical analogue $\sum_{p \in \Gamma} [k(p):p]$ does.
\end{corollary}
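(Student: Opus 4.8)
The plan is to reduce the assertion to the explicit computations already recorded in Examples~\ref{F31squaredisc} and~\ref{F31nonsquaredisc}. By Theorem~\ref{thm:conics_containing_P2}, for any general choice of quadrics $Q_1,\dots,Q_n$ vanishing on $\Lambda$ one has
\[
\sum_{p\in\Gamma}\Tr_{k(p)/k}\Jac_p(Q_1,\dots,Q_n)=2^{n-1}(\lra{1}+\lra{-1})-n^{\bbb{A}^1}(\calE),
\]
and the first summand on the right is a fixed multiple of the hyperbolic form, hence independent of the choice. Therefore the enriched count fails invariance of number precisely when $n^{\bbb{A}^1}(\calE)$ depends on the choice of $Q_i$, and it suffices to exhibit two general configurations over a single field $k$ producing distinct values of $n^{\bbb{A}^1}(\calE)$ in $\GW(k)$.

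For this I would take $k=\FF_{31}$ and $n=5$ and invoke Examples~\ref{F31squaredisc} and~\ref{F31nonsquaredisc}: the first displayed matrix $M$ yields $n^{\bbb{A}^1}(\calE)=2(\lra{1}+\lra{-1})$, a form of square discriminant, while the second yields a rank-$4$ form of nonsquare discriminant. Since the discriminant is a well-defined invariant on $\GW(\FF_{31})$ --- indeed already on $\W(\FF_{31})$, in which the two classes remain distinct because $2(\lra{1}+\lra{-1})$ is trivial whereas a rank-$4$ form of nonsquare discriminant is not hyperbolic --- the two values of $n^{\bbb{A}^1}(\calE)$, and hence of the enriched count, are different. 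The only points needing verification are that the two matrices satisfy the hypothesis $\cap_{i=0}^2\{f_i\neq 0\}=\emptyset$ of Proposition~\ref{A1Eulernumber_excess_bundle_n_quadrics_containingP2} (and that the corresponding $\Gamma$ is \'etale, as required for the left-hand side of Theorem~\ref{thm:conics_containing_P2}), together with the discriminant computations modulo $31$; all of these are routine and are performed by the Macaulay2 code of Appendix~\ref{Section:M2code} via the explicit formula~\eqref{answer} and Proposition~\ref{pr:TrUcomputation}.

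For the classical statement I would apply the rank homomorphism $\Rrank\colon\GW(k)\to\Z$ to the displayed equality. The left-hand side has rank $\sum_{p\in\Gamma}[k(p):k]$, while the right-hand side has rank $2^n-\Rrank\big(n^{\bbb{A}^1}(\calE)\big)=2^n-\big(\binom{n}{2}+n+1\big)$ by Proposition~\ref{A1Eulernumber_excess_bundle_n_quadrics_containingP2}; this equals $2^n-\binom{n}{n-2}-\binom{n}{n-1}-\binom{n}{n}=|\Gamma|$, as recalled in the introduction to \S\ref{section:ConicsP2n+1vanishP2} and in \cite[Proposition~13.5]{EisenbudHarris}, and is visibly independent of the $Q_i$. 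Hence the classical count $\sum_{p\in\Gamma}[k(p):k]$ is invariant.

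The main obstacle is not conceptual but computational: one must actually produce explicit quadrics over a small field whose excess bundle has Euler number of nonsquare discriminant. This is why the argument works over a field with few square classes (for $\FF_{31}$, $-1$ is a nonsquare, so $\W(\FF_{31})$ is small enough that the discriminant already separates the relevant rank-$4$ classes) and relies on the Macaulay2 implementation to search for and certify such a configuration; everything else in the proof is formal once Theorem~\ref{thm:conics_containing_P2} is in hand.
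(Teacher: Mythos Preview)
Your argument is correct and follows exactly the route the paper intends: the paper's ``proof'' consists entirely of the sentence ``Examples~\ref{F31squaredisc} and~\ref{F31nonsquaredisc} imply:'' placed immediately before the corollary, and you have simply unpacked what that implication means (reduction via Theorem~\ref{thm:conics_containing_P2} to non-invariance of $n^{\bbb{A}^1}(\calE)$, then distinguishing the two examples by discriminant). Your additional paragraph on the classical invariance via the rank map is a clean elaboration the paper leaves implicit.
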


\begin{remark} 
Examples \ref{F31squaredisc} and \ref{F31nonsquaredisc} were found by constructing matrices $M$ with random (linear) entries over arbitrary human-chosen (smallish) finite fields, and verifying the required properties using Macaulay2.
\end{remark}

\section{Conics tangent to $5$ conics}\label{section:conics_tgt_5_conics}

Let $k$ be a field of characteristic not $2$, and let $V$ be a $3$-dimensional vector space over $k$. Consider the projective plane $\P V = \Proj \oplus_{i=0}^{\infty}\Sym^i V^* \cong\P^2_k $. The space of conics in $\P V$ can be identified with $\P \Sym^2 V^* \cong \P^5_k$. For a smooth conic $C$ in the plane $\P V$, the conics tangent to $C$ form a Cartier divisor $\sigma_C: \calO_{\P^5} \to \calO(6)$. See for example \cite[8.1(b)]{eisenbud2006geometry}. Let $Z_C \hookrightarrow \P^5$ denote the corresponding closed subscheme. Inside $Z_C$ are points corresponding to double lines, for which we use the following notation. 

Let $S \hookrightarrow \P^5_k$ denote the Veronese surface which is the scheme theoretic image of the Veronese map $\P V^* \to \P \Sym^2 V^*$ sending $\ell$ in $V^*$ to $\ell^2$ in $\Sym^2 V^*$. Let  \[\pi: X=\Bl_S \P^5 \to \P^5\] denote the blow-up of $\P^5$ along $S$, and let $E$ denote the exceptional divisor of $\pi$. For a generic conic, the proper transform $\tilde{Z}_C$ of $Z_C$ by the blow-up $\pi: X \to \P^5$ is a Cartier divisor with associated line bundle $\calO(-2E) \otimes \pi^* \calO(6)$. See for example \cite[p. 750-751]{GriffithsHarris-Principles_algebraic_geometry}. Let $\tilde{\sigma}_C: \calO_{X} \to \calO(-2E) \otimes \pi^* \calO(6)$ denote the corresponding section. Let $\calV \to X$ be the vector bundle on $X$ given by $\calV = \oplus_{i=1}^5 \calO(-2E) \otimes \pi^* \calO(6)$. 

There is an isomorphism $\omega_X \cong \omega_{\P^5} \otimes \calO((3-1)E)$ \cite[p. 188]{hartshorne2013algebraic} , whence an isomorphism $$\omega_X \cong \pi^* \calO(-6) \otimes \calO(2E) .$$ Since we have an isomorphism $$\det \calV \cong \otimes_{i=1}^5 (\calO(-2E) \otimes \pi^* \calO(6)),$$ we may combine to produce a relative orientation $$\omega_X \otimes \det \calV \cong \calO(-2E)^{\otimes 4} \otimes \pi^* \calO(6)^{\otimes 4} \cong L^{\otimes 2} $$ of $\calV/X$ where $L$ is the line bundle $L = \calO(-2E)^{\otimes 2} \otimes \pi^* \calO(6)^{\otimes 2}$. (See Definition \ref{df:relative_orientation} for the definition of a relative orientation.) It will not matter here which isomorphism we choose. Note this choice is equivalent to the choice of relative orientation. The set of choices of relative orientation here is a torsor under $k^*/(k^*)^2$. Since $\calV$ has an odd dimensional summand, we have that the $\bbb{A}^1$-Euler number $n^{\bbb{A}^1}(\calV)$ is a multiple of $\lra{1}+\lra{-1}$ \cite[Proposition 19]{FourLines} \cite[Proposition 3.4]{Levine-EC}. 

We have a canonical relative orientation of $\oplus_{i=1}^5 \calO(6)$ on $\P^5$ resulting from the canonical isomorphism $\omega_{\P^5/k} \otimes \det (\oplus_{i=1}^5 \calO(6)) \cong \calO(12)^{\otimes 2}$ as in \S\ref{subsection:Intro5conics}.

For a general choice of $5$ conics $C_1, \ldots, C_5$, the intersection $\cap_{i=1}^5 \tilde{Z}_{C_i}$ is disjoint from $E$ \cite[Assertion 3 p. 750, p.754]{GriffithsHarris-Principles_algebraic_geometry} or \cite[8.2.3, 8.2.1 p. 301]{EisenbudHarris} (To use the reference \cite[8.2.3]{EisenbudHarris}, note the space of complete conics is the blowup $\Bl_S \P^5$ by \cite[8.2.1 p. 301]{EisenbudHarris}, so our $X$ notation is consistent with theirs.) Moreover a point of $\cap_{i=1}^5 \tilde{Z}_{C_i}$ corresponds to a smooth conic $C$ \cite[8.2.3, p. 302-303]{EisenbudHarris}. By \cite[8.2.3 Lemma 8.7, 8.5]{EisenbudHarris} $\tilde{Z}_{C_i}$ is smooth at $C$ and the only point of mutual intersection of the tangent spaces $T_C\tilde{Z}_{C_i}$ for $i=1,\ldots,5$ is the origin $\cap_{i=1}^5 T_C\tilde{Z}_{C_i} = \{0\}$. It follows that $\cap_{i=1}^5 \tilde{Z}_{C_i}$ is \'etale at $C$. We thus have that for a general choice of $5$ conics $C_1, \ldots, C_5$, the intersection $\cap_{i=1}^5 \tilde{Z}_{C_i}$ is disjoint from $E$ and finite \'etale over $k$. The degree of $\cap_{i=1}^5 \tilde{Z}_{C_i}$ over $k$ is $3264$ by Chasles's theorem \cite[Theorem 8.9]{EisenbudHarris}. 

Define sections $\sigma$ of $\oplus_{i=1}^5 \calO(6)$ and $\tilde{\sigma}$ of $\calV$  by $\sigma = \oplus_{i=1}^5 \sigma_{C_i}$ and $\tilde{\sigma} = \oplus_{i=1}^5 \tilde{\sigma}_{C_i}$ respectively. Via the canonical trivialization of $\calO(E)$ away from $E$, we have $$\ind_C \sigma = \ind_C \tilde{\sigma} $$ for every $C$ in $\cap_{i=1}^5 \tilde{Z}_{C_i}$, where $\ind_C$ denotes the local index of the appropriate sections of the corresponding relatively oriented bundles at the isolated zero $C$. Note that $\ind_C \sigma$ can be computed by the formulas of Equations~\eqref {eq:indpsigma_etale} and~\eqref{eq:Jac_ind_computation}.

We therefore have $$\sum_{C\text{ conic tangent to all }C_i} \ind_C \sigma =  \sum_{C\text{ conic tangent to all }C_i} \ind_C \tilde{\sigma} = n^{\bbb{A}^1}(\calV).$$ Since $\cap_{i=1}^5 \tilde{Z}_{C_i}$ is finite, \'etale of degree $3264$ over $k$, the rank of the middle sum (say) is $3264$. Since $n^{\bbb{A}^1}(\calV)$ is a multiple of $\lra{1}+\lra{-1}$, it follows that $$n^{\bbb{A}^1}(\calV) = \frac{3264}{2}(\lra{1}+\lra{-1}).$$ This shows the following enriched version of the $5$-conics problem. 

\begin{theorem}\label{thmconics_tgt_5_conics}
Let $k$ be a perfect field of characteristic not $2$. For $C_1, \ldots, C_5$ general conics in $\bbb{P}_k^2$, there is equality in $\GW(k)$
\begin{equation*}
\sum_{C\text{ conic tangent to all }C_i}\Tr_{k(C)/k}\lra{ \Jac \sigma (C)} = \frac{3264}{2}(\lra{1}+\lra{-1}),
\end{equation*} where $\sigma = \oplus_{i=1}^5 \sigma_{C_i}$ is the section of $\oplus_{i=1}^5 \calO(6)$ corresponding to $C_1, \ldots, C_5$. 
\end{theorem}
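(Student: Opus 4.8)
The plan is to identify the sum on the left with the $\bbb{A}^1$-Euler number $n^{\bbb{A}^1}(\calV)$ of the relatively oriented bundle $\calV$ on the blow-up $X=\Bl_S\P^5$, and then to evaluate $n^{\bbb{A}^1}(\calV)$ using only two facts: that $\calV$ has an odd-rank summand, and that its rank is $3264$. Passing to $X$ is essential, because on $\P^5$ the section $\sigma=\oplus_{i=1}^5\sigma_{C_i}$ is degenerate: every double line lies on each $Z_{C_i}$, so the Veronese surface $S$ is a positive-dimensional component of $\{\sigma=0\}$ and the local-index formula for the Euler number does not apply directly on $\P^5$.

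First I would invoke the classical geometry of complete conics recalled above: for a general choice of $C_1,\dots,C_5$, the scheme $\cap_{i=1}^5\tilde Z_{C_i}$ is finite étale over $k$, disjoint from $E$, and each of its points corresponds to a smooth conic $C$ at which every $\tilde Z_{C_i}$ is smooth with $\bigcap_i T_C\tilde Z_{C_i}=\{0\}$. Hence $C$ is a simple zero of $\tilde\sigma=\oplus_{i=1}^5\tilde\sigma_{C_i}$ which is étale over $k$ (here $k$ perfect is used), so Equation~\eqref{eq:indpsigma_etale} gives $\ind_C\tilde\sigma=\Tr_{k(C)/k}\lra{\Jac\tilde\sigma(C)}$. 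Because these points avoid $E$, where $\pi$ is an isomorphism, and because the relative orientation of $\calV/X$ was built from the canonical blow-up formula $\omega_X\cong\pi^*\omega_{\P^5}\otimes\calO(2E)$ together with canonical identifications among line bundles on $\P^5$, the canonical trivialization of $\calO(E)$ away from $E$ carries $(\calV,\tilde\sigma)$ with its relative orientation to $(\oplus_{i=1}^5\calO(6),\sigma)$ with its canonical relative orientation in a neighborhood of each such $C$. Therefore $\ind_C\tilde\sigma=\ind_C\sigma=\Tr_{k(C)/k}\lra{\Jac\sigma(C)}$.

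Now $X$ is smooth and proper over $k$, $\calV$ is relatively oriented, and for general conics $\{\tilde\sigma=0\}=\cap_{i=1}^5\tilde Z_{C_i}$ is $0$-dimensional, so the defining formula for the $\bbb{A}^1$-Euler number as a sum of local indices over the connected components of the zero locus yields
\[
n^{\bbb{A}^1}(\calV)=\sum_{C\text{ conic tangent to all }C_i}\ind_C\tilde\sigma=\sum_{C\text{ conic tangent to all }C_i}\Tr_{k(C)/k}\lra{\Jac\sigma(C)}.
\]
On the other hand $\calV=\oplus_{i=1}^5\big(\calO(-2E)\otimes\pi^*\calO(6)\big)$ contains a rank-one, hence odd-rank, summand, so $n^{\bbb{A}^1}(\calV)\in\Z\cdot(\lra{1}+\lra{-1})\subset\GW(k)$ by \cite[Proposition 19]{FourLines} and \cite[Proposition 3.4]{Levine-EC}. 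Applying the rank homomorphism $\GW(k)\to\Z$ and using $\Rrank\,\ind_C\tilde\sigma=[k(C):k]$ gives $\Rrank\,n^{\bbb{A}^1}(\calV)=\sum_C[k(C):k]=\deg\big(\cap_{i=1}^5\tilde Z_{C_i}/k\big)=3264$ by Chasles's theorem \cite[Theorem 8.9]{EisenbudHarris}; since $\lra{1}+\lra{-1}$ has rank $2$, this forces $n^{\bbb{A}^1}(\calV)=\frac{3264}{2}(\lra{1}+\lra{-1})$, which combined with the displayed equality is the claim.

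The only steps that require care — and where I expect whatever difficulty there is to lie — are (i) checking that the relative orientation on $\calV$ really does restrict, away from $E$ and via the canonical trivialization of $\calO(E)$, to the canonical relative orientation on $\oplus_{i=1}^5\calO(6)$, so that the local indices agree exactly and not merely up to a unit square; and (ii) assembling for general conics the input that $\cap_{i=1}^5\tilde Z_{C_i}$ is $0$-dimensional, reduced, and disjoint from $E$. The latter is supplied by the classical theory of complete conics; the former is a bookkeeping check with canonical isomorphisms. Everything else is formal manipulation with local indices and the rank map on $\GW(k)$.
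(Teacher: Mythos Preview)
Your proposal is correct and follows essentially the same route as the paper: pass to the blow-up $X=\Bl_S\P^5$, identify the sum of local indices with $n^{\bbb{A}^1}(\calV)$ via the canonical trivialization of $\calO(E)$ off $E$, then use that $\calV$ has an odd-rank summand together with $\Rrank\,n^{\bbb{A}^1}(\calV)=3264$ to conclude. The two care points you flag are exactly the ones the paper handles, and by the same classical references.
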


\begin{question}\label{arithmetic-geometric-interpretation-local-degree-5-conics-problem}
Is there an interpretation of $$   \Tr_{k(C)/k}\lra{ \Jac \sigma (C)} = \ind_C \sigma$$ in terms of the arithmetic-geometry of $C,C_1,\ldots,C_5$?
\end{question}

\begin{remark}\label{rmk:Veronese_contribution_5_conics_problem}
In contrast to Corollary~\ref{co:intersection_general_conics_containing_P2_non_invariance_number}, this excess intersection problem satisfies invariance of number in both the classical and quadratically enriched settings. In more detail, the refined Euler class $e(\calW,\sigma) \in \H\tilde\Z^{\calW^*}_{\{ \sigma = 0\}}(\bbb{P}^5_k)$ of $\calW = \oplus_{i=1}^5\calO_{\bbb{P}^5}(6)$ in Chow--Witt cohomology with support in $\{ \sigma = 0\}$ has a component associated to the Veronese surface $S \subset \{ \sigma = 0\}$ via the isomorphism $\H\tilde\Z^{\calW^*}_{\{ \sigma = 0\}}(\bbb{P}^5_k) \cong \oplus_{Z_0 \subset \sigma = 0} \H\tilde\Z^{\calW^*}_{\{ Z_0\}}(\bbb{P}^5_k) $ where $Z_0$ runs over the connected components of $\{ \sigma = 0\}$. (See Section~\ref{subsection:notation_conventions} for some information on Chow--Witt groups and refined Euler classes.) Pushing forward these components expresses $n^{\bbb{A}^1}(\calW)$ as a sum in $\GW(k)$ of a contribution from the connected component of $\{ \sigma = 0\}$ with support $S$ and contributions from each other connected component (all of which are isolated points). Theorem \ref{thmconics_tgt_5_conics} shows that the contribution from $S$ is $\frac{6^5 - 3264}{2}(\lra{1} + \lra{-1})$, which is independent of the choice of conics. 

The component $Z_S$ of  $\{\sigma = 0\}$ supported on $S$ is non-reduced, however (corresponding to the $\calO(-2E)$ term in the proper transfer above) \cite[p 463(d)]{EisenbudHarris}. The closed immersion $Z_S \hookrightarrow \bbb{P}^5_k$ is not a regular embedding, preventing us from defining an excess bundle and applying \cite[Theorem p.2]{Fasel-excess} \cite[Proposition 3.3.4]{DJK} or \cite[Remark 5.22]{BW-A1Eulerclasses}.
\end{remark}

We turn to the development of tools for computing $\mathbb{A}^1$-excess and residual intersections without assuming the existence of an excess bundle or extensive knowledge of the particular geometry.

\section{Complements on residual intersections} \label{sec:res-int} 
We recall, strengthen and generalize some results of Eisenbud--Ulrich \cite{eisenbud2019duality} on canonical modules in residual intersections.
\subsection{Introduction}
Throughout this section, except for \S\S\ref{subsec:modified-koszul},\ref{subsec:globalization}, we fix a noetherian local Gorenstein ring $R$ and an ideal $I$ of codimension $g$.
\subsubsection{Residual intersections}
Let $J \subset I$ be an ideal which can be generated by $s \ge g$ elements.
Then $K = J:I$ is called an \emph{$s$-residual intersection} if $\codim K \ge s$.
The residual intersection is called \emph{geometric} if $\codim K+I \ge s+1$.
We put $t = s-g \ge 0$.
The case $t=0$ is called \emph{linkage} \cite{peskine1974liaison}.

\subsubsection{Geometric interpretation} \label{subsec:quotient-geometric}
Recall that we have an equality of sets \[ Z(J) = Z(K) \cup Z(I) \subset \Spec(R) \] (e.g. apply \cite[Tags 00L3(4) and 00L2]{stacks-project} to $0 \to I/J \to R/J \to R/I \to 0$) and so in particular \[ Z(K) \supset \overline{Z(J) \setminus Z(I)}. \]
One may prove that equality holds if $J$ is reduced, but not in general \cite[Chapter 4 \S4 Theorem 7 ]{CoxLittleOshea-IdealsVarietiesAlgorithms} (the hypothesis that $k$ be algebraically closed here is not necessary because we use schemes not varieties).\NB{Let $h \in I(V(J) \setminus V(I))$, i.e. $h \in P$ whenever $J \subset P$ but $I \not\subset P$. Let $g \in I$. Then $hg \in P$ whenever $J \subset P$. Hence $hg \in \sqrt{J}$, and so $h \in \sqrt{J}:I$. Thus $Z(\sqrt{J}:I) \subset \overline{Z(J) \setminus Z(I)}$.}
We will constantly use this geometric interpretation.
One consequence is that if $\codim I > \codim J$ then $\codim K = \codim J$.\NB{We have $W=\overline{Z(J) \setminus Z(I)} \subset Z(K) \subset Z(J)$. It thus suffices to show that $\codim W \le \codim J$. If $P \supset J$ has codimension $c$, then either $Z(P) \subset W$ and so $\codim W \le c$, or $P \supset I$, whence $\codim I \le c$. Picking $P$ appropriately we get $c=\codim J < \codim I$, whence the claim.}

\subsubsection{Standard hypotheses} \label{subsub:hypotheses}
In order to control the situation, one puts conditions on $I$. Denote by $\mu(I)$ the minimal number of generators of an ideal $I$.
We say that $I$ satisfies the condition $G_s$ if \[ \mu(I_P) \le \codim P \] for all prime ideals $P \supset I$ with $\codim P \le s-1$ \cite[\S1]{eisenbud2019duality}.
We say that $I$ satisfies the condition $D_t$ if \[ \depth(R/I^j) \ge \dim(R/I) - j + 1 \] for all $j \le t$.

\begin{definition}
In the language of \cite[\S1]{eisenbud2019duality}, for fixed $s$ and $t = s-\codim I$, $I$ satisfies the weak, standard or strong hypothesis if $I$ satisfies $G_s$ and in addition $D_{t-1}$, $D_t$ or $D_{t+1}$.
\end{definition}
\begin{remark}
Observe that $G_s$ implies $G_{s-1}$ and $D_t$ implies $D_{t-1}$.
In particular the strong hypothesis implies the standard hypothesis, and so on, justifying the terminology.
\end{remark}

For us, the main utility of the strong hypothesis is that it ensures that if $K=J:I$ is an $s$-residual intersection then $R/K$ is Cohen--Macaulay.
Moreover then $I^{t+1}/JI^t$ is a canonical module for $R/K$.
See \cite[Theorem 3.1]{eisenbud2019duality}.
Similarly if $I$ satisfies the standard hypothesis, then it satisfies the strong hypothesis with respect to $s-1$, and hence any $(s-1)$-residual intersection $K_{s-1} = J_{s-1}:I$ is also Cohen--Macaulay.

\subsubsection{Good generators} \label{subsub:good-gens}
Let $J=(a_1, \dots, a_s)$ be a sequence of generators of $J$.
We put $J_i = (a_1, \dots, a_i)$ and $K_i = J_i:I$.
We call the generators \emph{good} if $K_i$ is a geometric $i$-residual intersection for all $g \le i < s$.
Note in particular that since $I$ has codimension $g$ and $K_g$ has codimension $g$, also $J_g$ must have codimension $g$ (see the end of \S\ref{subsec:quotient-geometric}); in other words $(a_1, \dots, a_g)$ is a regular sequence \cite[Theorem 2.1.2(c)]{bruns1998cohen}.

If $I$ satisfies $G_s$ then good generators always exist \cite[Corollary 1.6(a)]{ulrich1994artin}.

\subsection{Freeness of the conormal module}

\begin{proposition} \label{prop:conormal-free}
Let $K=J:I$ be an $s$-residual intersection in the local noetherian Gorenstein ring $R$.
Suppose that $I$ satisfies the weak hypothesis.
Then $J/KJ$ is free over $R/K$.
In fact if $a_1, \dots, a_s$ generate $J$, then $J/KJ$ is free on their images.
\end{proposition}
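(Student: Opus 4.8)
The plan is to show that $J/KJ$ is generated by $s$ elements over $R/K$ and simultaneously that its depth (equivalently, rank at every associated prime) is large enough to force freeness; the module is Cohen--Macaulay over the Cohen--Macaulay ring $R/K$, so it suffices to check that it is locally free of constant rank $s$. First I would reduce to a local computation: since $R/K$ is Cohen--Macaulay (this is where the weak hypothesis and \cite[Theorem 3.1]{eisenbud2019duality} enter, at least for the strong hypothesis; for the weak hypothesis one uses the analogous statement about linked ideals and good generators), it is enough to prove that $(J/KJ)_P$ is free of rank $s$ for every prime $P \supset K$, and in fact enough to do this after localizing at the minimal primes of $K$ together with a depth bound. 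The images of $a_1, \dots, a_s$ give a surjection $(R/K)^s \twoheadrightarrow J/KJ$, so the content is that this surjection is an isomorphism, i.e. that there are no unexpected relations.

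The key step is to analyze the situation at a minimal prime $P$ of $K$. Here I would use the geometric interpretation from \S\ref{subsec:quotient-geometric}: since $Z(K) = \overline{Z(J)\setminus Z(I)}$ up to the subtleties recalled there, a generic point $P$ of $Z(K)$ does not contain $I$, so $I_P = R_P$ and hence $J_P = (J:I)_P \cdot$ --- more precisely $K_P = J_P : I_P = J_P : R_P = J_P$, and also $J_P$ is generated by the $s$ elements $a_1, \dots, a_s$. Now I invoke the $G_s$ part of the weak hypothesis: $\mu(I_Q) \le \codim Q$ for primes $Q \supset I$ with $\codim Q \le s-1$; combined with the residual intersection inequality $\codim K \ge s$, this is exactly the input that lets one run the Artin--Nagata / Ulrich machinery showing $R_P/K_P = R_P/J_P$ has the expected codimension behaviour and that $J_P$ is generated by a regular sequence of length equal to $\dim R_P$, so that $J_P/J_P K_P = J_P/J_P^2$ is free of rank $s$ over $R_P/K_P$ on the images of the $a_i$. (One should be slightly careful: at such $P$ the ideal $K_P=J_P$ has height $s$ but might not equal the maximal ideal; the point is that $J_P$ is still generated by $s$ elements and $R_P/J_P$ is Cohen--Macaulay of the right dimension, forcing the $a_i$ to form a regular sequence by \cite[Theorem 2.1.2]{bruns1998cohen}, whence the conormal module is free on their images.)

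To pass from the minimal primes to all of $\Spec(R/K)$, I would argue that $J/KJ$ has no embedded behaviour: it is a maximal Cohen--Macaulay $R/K$-module. One way is to identify $J/KJ$ with a twist of a canonical-type module or to use the exact sequence $0 \to I/J \to R/J \to R/I \to 0$ together with the fact (again from \cite{eisenbud2019duality}, \cite{ulrich1994artin}) that $R/J$ and $R/I$ are Cohen--Macaulay after localizing appropriately along $Z(K)$; then a surjection $(R/K)^s \twoheadrightarrow J/KJ$ between two maximal Cohen--Macaulay modules of the same generic rank $s$ must have kernel supported in codimension $\ge 1$, but a submodule of the free module $(R/K)^s$ supported in positive codimension is zero, so the map is an isomorphism. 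I expect the main obstacle to be verifying that $J/KJ$ is genuinely maximal Cohen--Macaulay (rather than merely locally free at minimal primes) under the \emph{weak} hypothesis --- the cleanest route is probably to deduce this from the structure of good generators in \S\ref{subsub:good-gens}, peeling off the regular sequence $a_1,\dots,a_g$ and inducting on the geometric residual intersections $K_i$, using at the inductive step that the standard hypothesis for $s-1$ (which the weak hypothesis for $s$ implies for $s-1$) gives Cohen--Macaulayness of the intermediate $R/K_i$.
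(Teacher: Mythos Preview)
Your approach has a genuine gap. The reduction to minimal primes of $K$ relies on the claim that if $P$ is a minimal prime of $K$ then $I_P = R_P$, so that $K_P = J_P$ is a complete intersection. But this is precisely the condition that the residual intersection be \emph{geometric} ($\codim(K+I) \ge s+1$), which is \emph{not} assumed here. A concrete counterexample: in a two-dimensional regular local ring $R$ with maximal ideal $(x,y)$, take $I=(x,y)$ and $J=(x,y^2)$; then $K=J:I=(x,y)=I$, this is a $2$-residual intersection satisfying the weak hypothesis (with $t=0$, so $D_{-1}$ is vacuous), and the unique minimal prime of $K$ contains $I$. Your argument that $J_P/J_P^2$ is free via a regular sequence simply does not apply at such $P$.

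There is a second, related problem: your strategy hinges on $R/K$ being Cohen--Macaulay and on $J/KJ$ being maximal Cohen--Macaulay over it. Cohen--Macaulayness of $R/K$ is only known under the \emph{strong} hypothesis (see \S\ref{subsub:hypotheses}), not the weak one, and you yourself flag the MCM property of $J/KJ$ as the ``main obstacle'' without resolving it. So even setting aside the first gap, the argument does not close.

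The paper's proof avoids all of this by a direct, elementwise induction. One first passes to good generators $a_1,\dots,a_s$ (permissible since any surjection $(R/K)^s \twoheadrightarrow J/KJ$ onto a free module of rank $s$ is an isomorphism), and then shows by induction on $u$ that $J_u/K_uJ_u$ is free on $a_1,\dots,a_u$. The inductive step is pure element-chasing: given $\sum x_j a_j \in K_u J_u$, one reduces to $\sum x_j a_j = 0$, uses regularity of $a_u$ on $R/K_{u-1}$ to get $x_u \in K_{u-1}$, and then multiplies through by an arbitrary $i\in I$ and applies the inductive hypothesis together with the identity $K_u = (K_{u-1}+(a_u)):I$ from \cite[Proposition~3.3(1)]{eisenbud2019duality} to conclude $x_j \in K_u$. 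No Cohen--Macaulayness of $R/K$ and no localization at minimal primes is needed.
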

\begin{proof}
Generators $a_1, \dots, a_s$ yield a surjection $(R/K)^s \to J/KJ$.
If $J/KJ \wequi (R/K)^s$ then the map is necessarily an isomorphism \cite[Theorem 2.4]{Matsumura-CommutativeAlg}; hence we may pick generators conveniently.

Let $a_1, \dots, a_s$ be good generators (see \S\ref{subsub:good-gens}).
We shall show by induction on $u$ that $J_u/K_uJ_u$ is free on the images of the $a_j$; the desired result is the case $u=s$.
We must thus show that whenever \[ x_1 a_1 + \dots + x_u a_u =: r \in K_u J_u \] then $x_j \in K_u$ for all $j$.
By assumption we have $r = w_1 a_1 + \dots + w_u a_u$, with $w_j \in K_u$.
To prove that $x_j \in K_u$ we may as well show that $x_j - w_j \in K_u$; hence we may replace $x_j$ by $x_j - w_j$ and so assume that $r=0$.
By \cite[Proposition 3.3(1)]{eisenbud2019duality}, $a_u$ is regular on $R_{u-1} = R/K_{u-1}$, in the sense that $a_u$ is not a zero-divisor in $R_{u-1}$.
Since \[ x_u a_u = -(x_1 a_1 + \dots + x_{u-1} a_{u-1}) \in J_{u-1} \subset K_{u-1}, \] we have $x_u a_u = 0$ in $R_{u-1}$, whence $x_u \in K_{u-1}$ by regularity of $a_u$.
Let $i$ be an element of $I$.
Then $x_u i \in K_{u-1}I \subset J_{u-1}$, so $x_ui = y_1 a_1 + \dots + y_{u-1}a_{u-1}$.
Multiplying the original equation by $i$ and substituting our expression for $x_u i$ we get \[ (ix_1 + a_uy_1)a_1 + \dots + (ix_{u-1}+a_uy_{u-1})a_{u-1} = 0. \]
By induction, the coefficients $ix_j + a_u y_j$ are in $K_{u-1}$, so that $ix_j \in K_{u-1}+(a_u)$. Since $i$ was arbitrary, it follows that $x_j \in (K_{u-1} + (a_u)) : I$.  By \cite[Proposition 3.3(1)]{eisenbud2019duality}, $$K_u = (K_{u-1}+(a_u)): (K_{u-1} + I) .$$ From the definition of ideal quotients, we have $ (K_{u-1}+(a_u)): (K_{u-1} + I) =(K_{u-1} + (a_u)) : I$. Thus $K_u = (K_{u-1} + (a_u)) : I$ and $x_j \in K_u$ as desired. 
\end{proof}

\subsection{Modified Koszul complexes} \label{subsec:modified-koszul}
In anticipation of globalizing our results in the last subsection, we formulate the following in a slightly more general form than strictly needed right now.
\begin{definition} \label{def:K'}
Let $X$ be a scheme, $V$ a vector bundle on $X$, $\sigma$ a section of $V$, $$J = \image(\sigma^\dual: V^\dual \to \scr O_X),$$ $I \supset J$ an ideal sheaf and $t \ge -1$.
Denote by $\Kos(V,\sigma)_\bullet$ the usual Koszul complex, so that $\Kos(V, \sigma)_n = \Lambda^n V^\dual$. See for example, \cite[Tag 0622]{stacks-project}.
Let \[ \Kos'(V,\sigma)_n = I^{t+1-n} \Kos(V, \sigma)_n \subset \Kos(V, \sigma)_n, \] where $I^i := \scr O_X$ for $i \le 0$.
This is a complex (because the differential of $\Kos(V, \sigma)_\bullet$ is essentially given by multiplication by $\sigma^\dual$), which we call the \emph{modified Koszul complex}.
If $V$ has rank $s$, projection to the highest term defines a canonical map \[ \alpha_{V, \sigma}: \Kos'(V, \sigma) \to \Kos(V, \sigma) \to \Sigma^s \det V^\dual. \]
By construction $\pi_0 \Kos'(V,\sigma) \wequi I^{t+1}/JI^t$ (and $\pi_i \Kos'(V, \sigma) = 0$ for $i < 0$) and hence we have another canonical map \[ \Kos'(V, \sigma) \to I^{t+1}/JI^t. \]
\end{definition}

\begin{definition} \label{def:K'-aff}
Taking $V = \scr O_X^s$, $\sigma = (a_1, \dots, a_s)$ corresponds to a choice of $s$ generators of $J$.
In this case we write \[ \Kos'(a_1, \dots, a_s) := \Kos'(V, \sigma) \] and denote the map $\alpha_{V, \sigma}$ by $\alpha_{a_1, \dots, a_s}$.
\end{definition}

\begin{remark} \label{rmk:K'-functorial}
For fixed $t$, $\Kos'$ is functorial in the data $(V,\sigma,I)$.
That is, given another vector bundle $V'$ and a morphism $f: V'^\dual \to V^\dual$, we obtain $f_*: \Kos(V', f^\dual \sigma) \to \Kos(V, \sigma)$ given in degree $n$ by $\Lambda^n f$ \cite[Tag 0624]{stacks-project}.
This restricts to $f_*: \Kos'(V',  f^\dual \sigma) \to \Kos'(V, \sigma)$.
\end{remark}

\subsection{Canonical traces}\NB{Justification of the term ``trace:" The counit of adjunction $f_!f^! \scr O \to \scr O$ is often called the trace. Taking $f=i$, we construct a map $\tau: i_! M \to \scr O[s]$ which induces by adjunction an isomorphism $\tau^\dagger: M \to i^! \scr O[s]$. Under this isomorphism, $\tau$ corresponds to the counit. }
Fix an $s$-residual intersection $K=J:I$ satisfying the strong hypothesis.
We work in the derived category of $R$.
Write $\Dual(\ph) = \RiHom(\ph, R)$ for the standard duality.
Since $R$ is Gorenstein, $R/K$ is Cohen--Macaulay of codimension $s$ (see \S\ref{subsub:hypotheses}), and $I^{t+1}/JI^t$ is a canonical module for $R/K$ \cite[Theorem 3.1]{eisenbud2019duality}, there exists a \emph{non-canonical} isomorphism \cite[Proposition 3.3.3(bi), Theorem 3.3.4(b) and Theorem 3.3.7]{bruns1998cohen} \[ \Sigma^s \Dual(R/K) \wequi I^{t+1}/JI^t. \]
\begin{remark}
In fact, given a choice of good generators of $J$, \cite[Theorem 4.1]{eisenbud2019duality} exhibits a specific such isomorphism.
One easily sees that this depends on the choices; for example multiplying any one of the generators by a unit of $R$ changes the isomorphism by that unit.
\end{remark}

To obtain a canonical isomorphism, we shall tensor the right hand side by the line bundle $\det(J/KJ)^\dual$; this makes sense by Proposition \ref{prop:conormal-free}.
Before we can do so, we need the following result.
\begin{lemma} \label{lemm:I-resn}
Suppose that $I$ satisfies the standard hypothesis and $J=(a_1, \dots, a_s)$.
Then the map $\Kos'(a_1, \dots, a_s) \to I^{t+1}/JI^t$ (constructed in Definition \ref{def:K'}) is an equivalence.
\end{lemma}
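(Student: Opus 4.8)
The plan is to recast the statement as an acyclicity assertion and apply the acyclicity lemma (Peskine--Szpiro). By Definition~\ref{def:K'} we already have $\pi_0 \Kos'(a_1,\dots,a_s) \wequi I^{t+1}/JI^t$ and $\pi_i \Kos'(a_1,\dots,a_s)=0$ for $i<0$, so the canonical map is an equivalence if and only if the complex $\Kos'(a_1,\dots,a_s)$, with its differential inherited from the Koszul complex, has vanishing homology in every positive degree.

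First I would verify the depth hypothesis of the acyclicity lemma, namely that $\depth \Kos'(a_1,\dots,a_s)_n \ge n$ for $n\ge 1$. Write $g=\codim I$, $t=s-g$. The term $\Kos'(a_1,\dots,a_s)_n$ is $I^{t+1-n}$ times a free module of rank $\binom{s}{n}$. For $n\ge t+2$ it is free, hence of depth $\dim R$, and $\dim R \ge \codim K \ge s \ge n$. For $1\le n\le t+1$ it is a sum of copies of the ideal $I^{t+1-n}$, and applying the depth lemma to $0\to I^j\to R\to R/I^j\to 0$ together with the bound $\depth(R/I^j)\ge \dim(R/I)-j+1$ coming from $D_t$ (valid since $j=t+1-n\le t$), a short computation using $\dim R\ge s=t+g$ gives $\depth I^{t+1-n}\ge n$. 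This is precisely where the $D_t$ part of the standard hypothesis (as opposed to the $D_{t-1}$ of the weak hypothesis) is needed.

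It then remains to show that each $H_i:=\pi_i\Kos'(a_1,\dots,a_s)$ with $i\ge1$ is zero or of depth $0$; the acyclicity lemma then forces $H_i=0$. I would show $H_i=0$ for $i\ge1$ by induction on $\dim R$: granting this for each localization $R_P$ with $P\ne\mathfrak m$, the module $H_i$ is supported only at $\mathfrak m$, hence of finite length, hence (if nonzero) of depth $0$. To invoke the inductive hypothesis at $P$ one checks that the hypotheses descend --- $K_P=J_P:I_P$ still has codimension $\ge s$ (every minimal prime over $K$ has height $\ge s$, and localizing only discards primes), $I_P$ still satisfies $G_s$, and the relevant $D$-condition survives by standard inequalities relating depth and dimension under localization. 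One wrinkle: the localized complex $(\Kos'(a_1,\dots,a_s))_P$ retains the same $t$, which may exceed $s-\codim I_P$, so the induction should be run for the slightly more general assertion that $\Kos'(a_1,\dots,a_s;t)\to I^{t+1}/JI^t$ is an equivalence whenever $I$ satisfies $G_s$ and $D_t$ for an arbitrary $t\ge s-\codim I$. The case $t=0$ is a convenient base point: then $\codim J=s$ is forced (by the end of \S\ref{subsec:quotient-geometric}, since $\codim J\le\codim I=s\le\codim K$), so $a_1,\dots,a_s$ is a regular sequence and $\Kos'(a_1,\dots,a_s)$ is, away from its degree-$0$ term, the ordinary Koszul complex, which is acyclic in positive degrees.

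The main obstacle is exactly this homology-vanishing step: acyclicity of modified Koszul complexes is the genuine hard input of residual intersection duality, and $G_s$, $D_t$ and $\codim K\ge s$ all enter. A clean alternative to the induction is to cite the acyclicity directly from Eisenbud--Ulrich --- their explicit duality isomorphism \cite[Theorem~4.1]{eisenbud2019duality} is built from such a resolution of $I^{t+1}/JI^t$ --- and then match the complexes, noting that the conclusion does not depend on the chosen length-$s$ generating sequence of $J$ once $\codim(J:I)\ge s$.
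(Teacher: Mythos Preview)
Your strategy via the Peskine--Szpiro acyclicity lemma is a natural alternative, and the depth computation for the terms $\Kos'_n$ is correct (using $\dim R \ge \codim K \ge s$). The gap is in the localization step. The condition $D_t$, namely $\depth(R/I^j) \ge \dim(R/I) - j + 1$, is a depth inequality at the maximal ideal and does \emph{not} descend to localizations by any ``standard inequality'': there is no general lower bound on $\depth(R_P/I_P^j)$ in terms of $\depth(R/I^j)$. (Contrast with Cohen--Macaulayness or Serre's $S_k$, which do localize.) Since the acyclicity lemma needs $(H_i)_P=0$ for every non-maximal $P$, and that is exactly the content of the lemma at $R_P$, your induction on $\dim R$ is circular unless you supply a separate argument for exactness after localization. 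You also conflate two inductions: you announce induction on $\dim R$ but then offer $t=0$ as the base, which is the base for an induction on $t$.

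The paper avoids localization entirely by a different induction. It first shows the statement is independent of the generating sequence (two such sequences differ by a matrix that is invertible modulo $K$ by Proposition~\ref{prop:conormal-free}, hence invertible), and then passes to \emph{good} generators. Viewing $\Kos'(a_1,\dots,a_s)$ as the degree-$(t+1)$ piece of the Koszul complex over the Rees-type ring $S=\bigoplus_j I^j$, it inducts on $i$ via the cofiber sequence
\[
\Kos_S(a_1,\dots,a_{i-1})^{(j-1)} \xrightarrow{a_i} \Kos_S(a_1,\dots,a_{i-1})^{(j)} \to \Kos_S(a_1,\dots,a_i)^{(j)}.
\]
The inductive step reduces to injectivity of $a_i\colon I^{j-1}/J_{i-1}I^{j-2} \to I^j/J_{i-1}I^{j-1}$; for $j\le 1$ this is regularity of $(a_1,\dots,a_g)$, and for $j\ge 2$ it is exactly \cite[Lemma~2.7(a)]{ulrich1994artin}. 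Thus the hard input is isolated to one citation and no localization is needed. Your closing suggestion to cite Eisenbud--Ulrich is close in spirit, but the operative reference is this earlier lemma of Ulrich rather than \cite[Theorem~4.1]{eisenbud2019duality}.
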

\begin{proof}
We first show that this assertion is independent of the choice of generators.
By Remark \ref{rmk:K'-functorial}, given a free finitely generated module $F$ and a surjection $\sigma^\dual: F^\dual \to J$, we functorially obtain a complex $\Kos'(F, \sigma)$ and a morphism $\Kos'(F,\sigma) \to I^{t+1}/JI^t$.
Now let $(b_1, \dots, b_s)$ be another set of generators of $J$.
Writing the $b_i$ in terms of the $a_j$ we obtain a (perhaps non-unique) matrix $M: R^s \to R^s$ such that $\sigma^\dual_a \circ M = \sigma^\dual_b$.
Thus there is an induced morphism $\Kos'(M): \Kos'(R^s, \sigma_b) \to \Kos'(R^s, \sigma_a)$ over $I^{t+1}/JI^t$.
The two maps $\sigma^\dual_a, \sigma^\dual_b: R^s \to J$ become isomorphisms modulo $K$ by Proposition \ref{prop:conormal-free}, so that also $M$ must be an isomorphism modulo $K$ as well.
Thus in particular $M$ is surjective ($R$ being local\NB{Nakayama's lemma}), and hence an isomorphism \cite[Theorem 2.4]{Matsumura-CommutativeAlg}.
Consequently the desired statement for the two sets of generators is equivalent.

We may thus assume that the generators $(a_1, \dots, a_s)$ are good.
Consider the graded ring \[ S = \bigoplus_{j \in \Z} S^{(j)}, S^{(j)} := I^j, \] where $I^j := R$ for $j \le 0$.
We have elements $a_1, \dots, a_s \in S^{(1)}$.
The grading on $S$ supplies the Koszul complex $\Kos_S(a_1, \dots, a_s)$ with an additional grading, i.e. a decomposition of complexes \[ \Kos_S(a_1, \dots, a_s) = \bigoplus_i \Kos_S(a_1, \dots, a_s)^{(i)}, \quad \Kos_S(a_1, \dots, a_s)^{(i)}_n = (\Lambda^n S^s)^{(i-n)}. \]
We have $\Kos_S(a_1, \dots, a_s)^{(t+1)} = \Kos'(a_1, \dots, a_s)$.
We shall prove by induction on $0 \le i \le s$ that for $i-g+1 \le j \le t+1$ we have $\Kos_S(a_1, \dots, a_i)^{(j)} \wequi I^j/J_iI^{j-1}$. Since $$\pi_0(\Kos_S(a_1, \dots, a_i)) \cong S/J_i,$$ this is equivalent to the claim that for such $(i,j)$, the complex $\Kos_S(a_1, \dots, a_i)^{(j)}$ has homology groups concentrated in degree $0$.

If $i=0$, then $\Kos_S(a_1,\ldots,a_i) \cong S$ and the complex $\Kos_S(a_1,\ldots,a_i)^{(j)} $ is concentrated in degree $0$ for all $j$.
Assume the claim holds for $i-1$, and let $i-g+1 \le j \le t+1$.
We have a cofiber sequence \[ \Kos_S(a_1, \dots, a_{i-1})^{(j-1)} \xrightarrow{a_{i}} \Kos_S(a_1, \dots, a_{i-1})^{(j)} \to \Kos_S(a_1, \dots, a_i)^{(j)}. \]
By induction, the complexes $\Kos_S(a_1, \dots, a_{i-1})^{(j-1)}$ and $\Kos_S(a_1, \dots, a_{i-1})^{(j)}$ are concentrated in degree $0$, so the first map is equivalent to $$I^{j-1}/J_{i-1}I^{j-2} \xrightarrow{a_{i}} I^{j}/J_{i-1}I^{j-1},$$ and it suffices to show this map is injective. 

For $j \le 1$, we have $I^{j-1}/J_{i-1}I^{j-2} = R/J_{i-1}$ and $I^{j}/J_{i-1}I^{j-1} \subseteq R/J_{i-1}$. It thus suffices to prove that the composite of multiplication by $a_{i}$ \[ R/J_{i-1} \to I/J_{i-1} \subset R/J_{i-1} \] is injective. Since $i-g+1\le j \le 1$ we have that $i \le g$. But $(a_1, \dots, a_g)$ is a regular sequence (see \S\ref{subsub:good-gens}), so we are done in this case.

If $j \ge 2$, the induction step is provided by \cite[Lemma 2.7(a)]{ulrich1994artin}, applied with $r=t$ (the condition $AN_{s-1}^-$ \cite[Definition 1.2]{ulrich1994artin} is implied by $AN_{s-1}$ which follows from the standard hypothesis; see \S\ref{subsub:hypotheses}).
\end{proof}

Here is the main result of this section.
\begin{theorem} \label{thm:canonical-trace}
Let $K=J:I$ be an $s$-residual intersection (in a noetherian local Gorenstein ring $R$) satisfying the strong hypothesis.
Pick generators $J=(a_1, \dots, a_s)$.
The following hold.
\begin{enumerate}
\item \label{tmap_independent_generators} The composite \[ \tmap: I^{t+1}/JI^t \otimes_{R/K} \det(J/KJ)^\dual \wequi I^{t+1}/JI^t \wequi \Kos'(a_1, \dots, a_s) \xrightarrow{\alpha_{a_1, \dots, a_s}} \Sigma^s R \] (where the first equivalence is via $da_1 \wedge \dots \wedge da_s \in \det(J/KJ)^\dual$ and the second is by Lemma \ref{lemm:I-resn}; for $\alpha_{a_1, \dots, a_s}$ see Definition \ref{def:K'-aff}) is independent of the choice of generators.

\item \label{tmap_dag_iso} The map \[ \tmap^\dagger: I^{t+1}/JI^t \otimes_{R/K} \det(J/KJ)^\dual \to \Sigma^s \Dual(R/K) \] adjoint to the composite \begin{gather*} R/K \otimes_R^L (I^{t+1}/JI^t \otimes_{R/K} \det(J/KJ)^\dual) \to R/K \otimes_R (I^{t+1}/JI^t \otimes_{R/K} \det(J/KJ)^\dual) \\ \xrightarrow{\text{mult}} I^{t+1}/JI^t \otimes_{R/K} \det(J/KJ)^\dual \xrightarrow{\tmap} \Sigma^s R \end{gather*} is an isomorphism.
\end{enumerate}
\end{theorem}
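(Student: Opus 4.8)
The plan is to prove part (1) by a change-of-generators computation in which two determinant factors cancel, and to deduce part (2) from (1) together with Grothendieck duality for the Cohen--Macaulay ring $R/K$ and the explicit (choice-dependent) isomorphism of Eisenbud--Ulrich. For part (1), let $b_1, \dots, b_s$ be a second set of generators of $J$; expressing the $b_i$ in terms of the $a_j$ gives a matrix $M \colon R^s \to R^s$ with $\sigma_b^\dual = \sigma_a^\dual \circ M$, which, as in the proof of Lemma \ref{lemm:I-resn}, is an isomorphism modulo $K$ by Proposition \ref{prop:conormal-free} and hence an isomorphism. By Remark \ref{rmk:K'-functorial}, $M$ induces $\Kos'(M) \colon \Kos'(b_1, \dots, b_s) \to \Kos'(a_1, \dots, a_s)$ over $I^{t+1}/JI^t$, so the two equivalences $\Kos'(b_\bullet) \wequi I^{t+1}/JI^t \wequi \Kos'(a_\bullet)$ of Lemma \ref{lemm:I-resn} are intertwined by $\Kos'(M)$. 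It then remains to check two compatibilities: first, because $\alpha$ is projection onto the top exterior power and $\Kos'(M)$ acts there by $\Lambda^s M = \det M$, one has $\alpha_{a_1, \dots, a_s} \circ \Kos'(M) = \det(M)\cdot \alpha_{b_1, \dots, b_s}$ after using the standard bases of $R^s$ to identify both copies of $\Sigma^s \det(R^s)^\dual$ with $\Sigma^s R$; second, because $M$ reduces modulo $K$ to the change-of-basis matrix between the bases $\bar a_j$ and $\bar b_i$ of the free $R/K$-module $J/KJ$ (Proposition \ref{prop:conormal-free}), the trivialization of $\det(J/KJ)^\dual$ by $db_1 \wedge \cdots \wedge db_s$ equals $(\det M \bmod K)$ times the one by $da_1 \wedge \cdots \wedge da_s$. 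Substituting these into the definition of $\tmap$ and using that $\det M \in R$ acts on the $R/K$-module $I^{t+1}/JI^t$ through $\det M \bmod K$ while $\alpha$ is $R$-linear, the two factors cancel, so $\tmap$ does not depend on the chosen generators.

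For part (2), first note that both sides are canonical modules for $R/K$ concentrated in a single degree. Indeed $R$ is Gorenstein and $R/K$ is Cohen--Macaulay of codimension $s$ (see \S\ref{subsub:hypotheses}), so $\Ext^i_R(R/K, R) = 0$ for $i \ne s$ and $\Sigma^s \Dual(R/K) \wequi \omega_{R/K} := \Ext^s_R(R/K, R)$; the source is the canonical module $I^{t+1}/JI^t$ of \cite[Theorem 3.1]{eisenbud2019duality} tensored with the invertible $R/K$-module $\det(J/KJ)^\dual$ (invertible by Proposition \ref{prop:conormal-free}). Since $R/K$ is local this invertible module is free, and $\Hom_{R/K}(\omega_{R/K}, \omega_{R/K}) \cong R/K$ (by reflexivity of maximal Cohen--Macaulay modules under $\Hom_{R/K}(\ph, \omega_{R/K})$), so $\tmap^\dagger$ is multiplication by some $\bar r \in R/K$, and the assertion is precisely that $\bar r$ is a unit, equivalently (Nakayama) that $\tmap^\dagger$ is surjective.

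To prove $\bar r$ is a unit I would use part (1) to reduce to the case that $a_1, \dots, a_s$ are good generators (\S\ref{subsub:good-gens}), and then unwind the construction of $\tmap$ — Lemma \ref{lemm:I-resn}, which runs through the Koszul complex of the graded ring $\bigoplus_j I^j$, followed by the projection $\alpha_{a_1, \dots, a_s}$ of Definition \ref{def:K'-aff} — to identify $\tmap^\dagger$, up to the trivialization $da_1 \wedge \cdots \wedge da_s$ of $\det(J/KJ)^\dual$, with the explicit isomorphism of \cite[Theorem 4.1]{eisenbud2019duality}, which we have merely rewritten without choices; then $\tmap^\dagger$ is an isomorphism for good generators, hence for all generators by (1). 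I expect this last matching to be the main obstacle, as it requires comparing two fairly different presentations of the same map. A purely internal alternative would be to establish surjectivity of $\tmap^\dagger$ directly: localizing at a minimal prime $P$ of $K$ with $I \not\subseteq P$ (such $P$ exists because $\codim I < \codim K$, and has height $s$ since the Cohen--Macaulay ring $R/K$ is equidimensional) reduces $\tmap^\dagger_P$ to the classical complete-intersection duality isomorphism of \cite[III \S1]{hartshorne1966residues} and hence shows it is an isomorphism; but this only yields $\bar r \notin P$, so a genuinely global argument is still needed to conclude that $\bar r$ is a unit.
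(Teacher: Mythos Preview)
Your argument for part \ref{tmap_independent_generators} is correct and essentially identical to the paper's: both set up the change-of-generators matrix $M$, observe that $\Kos'(M)$ acts by $\det M$ on the top exterior power while the trivialization of $\det(J/KJ)^\dual$ changes by the same factor modulo $K$, and cancel.

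For part \ref{tmap_dag_iso} there is a genuine gap. Your reduction to ``$\bar r$ is a unit'' is fine, but neither proposed route is completed. You yourself note that the localization argument only yields $\bar r \notin P$ for certain minimal primes, which is insufficient in a local ring (and incidentally the existence of a minimal prime $P \supset K$ with $I \not\subset P$ is not obvious without the \emph{geometric} hypothesis, which is not assumed here). Your primary plan --- to identify $\tmap^\dagger$ directly with the Eisenbud--Ulrich isomorphism of \cite[Theorem 4.1]{eisenbud2019duality} --- is the right instinct, but you stop at ``I expect this last matching to be the main obstacle.'' That matching \emph{is} the content of the proof, and it does not drop out of unwinding definitions.

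The paper carries it out by induction on $i$ for $g \le i \le s$ along the chain of good generators: one shows that the analogous map $\tmap_i^\dagger: I^{i-g+1}/J_iI^{i-g} \to \Sigma^i \Dual(R/K_i)$ is an isomorphism for each intermediate residual intersection $K_i = J_i:I$. The base case $i=g$ is linkage: $\tmap_g^\dagger$ sits in a commutative square with the classical Koszul self-duality $R/J_g \wequi \Sigma^g \Dual(R/J_g)$ and the map $\omega_{R/K_g} \to \omega_{R/J_g}$ dual to $R/J_g \to R/K_g$; the latter is injective with image $I \cdot \omega_{R/J_g}$ by \cite[Theorem 4.1(1)]{eisenbud2019duality}, which matches the inclusion $I/J_g \hookrightarrow R/J_g$ on the other side. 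For the inductive step one takes cofibers by $a_i$ (regular on $R/K_{i-1}$ by \cite[Proposition 3.3(1)]{eisenbud2019duality}) to pass to $R'' = R/(K_{i-1}+(a_i))$, and then compares the inclusion $\Kos'(a_1,\dots,a_i) \hookrightarrow \Kos'(a_1,\dots,a_{i-1})/a_i$ with the map $\omega_{R/K_i} \to \omega_{R''}$ dual to the projection, whose image is $I \cdot \omega_{R''}$ by \cite[Corollary 4.3]{eisenbud2019duality}; injectivity on the Koszul side uses \cite[Proposition 3.3(5)]{eisenbud2019duality}. The missing idea, then, is this inductive decomposition: it breaks the comparison with Eisenbud--Ulrich into the linkage case plus one regular element at a time, each step controlled by an explicit result from \cite{eisenbud2019duality}.
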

\begin{remark} \label{rmk:uniqueness-meaning}
\begin{enumerate}
\item Recall that given an $\infty$-category $\scr C$ and objects $X, Y \in \scr C$, we obtain a \emph{space} $\Map(X, Y)$.
  When working with $\infty$-categories, any sensible construction should produce a point in the space $\Map(X, Y)$ and not in the set $\pi_0\Map(X,Y)$.
  Similarly, when comparing two constructions, one should not just show that two points of $\Map(X, Y)$ have the same image in $\pi_0\Map(X,Y)$, but in addition provide a \emph{specific path} between the two points.
  This additional bookkeeping is how $\infty$-categories overcome problems that plague derived categories (such as non-uniqueness of cones).
\item There is one case in which ``doing the right thing'' (providing points and paths in $\Map(X,Y)$) is the same as ``doing the old thing'' (providing points of and checking equalities in $\pi_0\Map(X,Y)$), namely when $\Map(X,Y)$ is discrete (i.e. $\pi_i(\Map(X,Y), f) = 0$ for all $i>0, f\in \Map(X,Y)$; in other words $\Map(X,Y) \to \pi_0\Map(X,Y)$ is an equivalence of spaces).
\item Theorem \ref{thm:canonical-trace}\ref{tmap_independent_generators}  asserts that a certain construction is independent of choices.
  A priori this at least means that for any two choices we should provide an explicit homotopy connecting them; even better we should prove that the space of such homotopies is contractible.
  Luckily the space of maps from $I^{t+1}/JI^t \otimes_{R/K} \det(J/KJ)^\dual$ to $\Sigma^s R$ is equivalent (perhaps non-canonically, before the theorem is proved) to \[ \Map(R, \Sigma^s \Dual(I^{t+1}/JI^t \otimes_{R/K} \det(J/KJ)^\dual)) \wequi \Map(R, \Dual^2(R/K)) \wequi R/K \] and so in particular is discrete.
  Thus the space of homotopies is contractible if and only if it is nonempty, that is, for any two choices, the resulting maps are homotopic.
  In other words establishing the strongest possible meaning of ``independent of choices'' is equivalent to the weakest possible meaning in this case.
\end{enumerate}
\end{remark}

\begin{proof}[Proof of Theorem \ref{thm:canonical-trace}.]
We first show \ref{tmap_independent_generators}, i.e. that $\tmap$ is independent of the choice of generators.
Thus let $(a_1, \dots, a_s)$ and $(a_1', \dots, a_s')$ be two such choices.
As in the proof of Lemma \ref{lemm:I-resn}, writing the $a_i'$ in terms of the $a_i$ we obtain from Remark \ref{rmk:K'-functorial} an invertible matrix $M: R^n \to R^n$ inducing $\Kos'(M): \Kos'(a_1', \dots, a_s') \xrightarrow{\wequi} \Kos'(a_1, \dots, a_s)$.
Put $d=\det M \in R^\times$.
The map $R \wequi \Kos(a_1', \dots, a_s')_s \to \Kos(a_1, \dots, a_s)_s \wequi R$ is given by multiplication by $d$, and hence the same is true for the restriction to the subcomplexes $\Kos'$.
Consequently the left hand square of the following diagram commutes; the right hand square also commutes essentially by construction
\begin{equation*}
\begin{CD}
\Sigma^s R @<{\alpha_{a_1', \dots, a_s'}}<< \Kos'(a_1', \dots, a_s') @>{\wequi}>> I^{t+1}/JI^t \otimes_{R/K} \det(J/KJ)^\dual \\
@V{d}VV                                       @V\Kos'(M)VV                                             @VdVV \\
\Sigma^s R @<{\alpha_{a_1, \dots, a_s}}<< \Kos'(a_1, \dots, a_s) @>{\wequi}>> I^{t+1}/JI^t \otimes_{R/K} \det(J/KJ)^\dual.
\end{CD}
\end{equation*}
The unlabelled horizontal maps are the canonical equivalences (of Lemma \ref{lemm:I-resn}), twisted by the determinant of the duals of the generators.
If $\tmap$ and $\tmap'$ denote the relevant composites we seek to prove are homotopic, we thus learn that $d\tmap' = \tmap d$.
The result follows since $d$ is a unit.\NB{And $\Dcat(R)$ is $R$-linear...}

To prove \ref{tmap_dag_iso}, i.e. that $\tmap^\dagger$ is an equivalence, we may thus choose generators conveniently, and so assume they are good (see \S\ref{subsub:good-gens}).
Also the twist by $\det(J/KJ)^\dual$ is irrelevant for this problem.
We prove by induction on $i$, for $g \le i \le s$, that $\tmap_{a_1, \dots, a_{i}}^\dagger: I^{i-g+1}/J_iI^{i-g} \to \Sigma^i \Dual(R/K_i)$ is an isomorphism.

Consider first the case $i=g$.
The commutative diagram
\begin{equation*}
\begin{tikzcd}
I/J_g \wequi \Kos'(a_1, \dots, a_g) \ar[d] \ar[rd, "\alpha_{V,\sigma}"] \\
R/J_g \wequi \Kos(a_1, \dots, a_g) \ar[r] & \Sigma^g R
\end{tikzcd}
\end{equation*}
(the bottom left hand equivalence is because $a_1, \dots, a_g$ is a regular sequence (see \S\ref{subsub:good-gens}) and the top left hand equivalence is by Lemma \ref{lemm:I-resn}) induces by passing to adjoints (using Lemma \ref{lemm:adjoints} below\NB{$P = \Sigma^gR$, $A=R/J_g$, $B=R/K_g$, $M=K'$, $N=K$}) a commutative diagram
\begin{equation} \label{eq:can-proof-1}
\begin{tikzcd}
I/J_g \wequi \Kos'(a_1, \dots, a_g) \ar[d] \ar[r] & \Sigma^g \Dual(R/K_g) \ar[d, "\iota"] \wequi \omega_{R/K_g} \\
R/J_g \wequi \Kos(a_1, \dots, a_g) \ar[r, "\sigma"] & \Sigma^g \Dual(R/J_g) \wequi \omega_{R/J_g}.
\end{tikzcd}
\end{equation}
(E.g. the map $I/J_g \to \Sigma^g \Dual(R/K_g)$ is equivalent by adjunction to a map $I/J_g \otimes^L R/K_g \to \Sigma^g R$, which we obtain from the $R/K_g$-module structure on $I/J_g$ and the map $I/J_g \to \Sigma^g R$.)
Here the map $\iota$ is the dual of $R/J_g \to R/K_g$ and the left hand vertical map is the canonical inclusion.
Noting that $I/J_g = I \cdot R/J_g$, it suffices to prove that $\sigma$ is an isomorphism and $\iota$ is an injection with image $I \cdot \omega_{R/J_g}$.
Unwinding the definitions, the map $\sigma$ identifies with the canonical self-duality of $\Kos(a_1, \dots, a_g)$\NB{Compute $\Dual(R/J_g)$ in terms of the free resolution $\Kos(a_1, \dots, a_g)$.} \cite[Proposition 17.15]{Eisenbud_CommutativeAlgebra}, whence the former statement holds.
The latter statement is \cite[Theorem 4.1(1)]{eisenbud2019duality}.

Now suppose the result has been proved for $i-1$.
Let $K''$ be the mapping cone on $\Kos'(a_1, \dots, a_{i-1}) \xrightarrow{a_i} \Kos'(a_1, \dots, a_{i-1})$, and similarly let $R''$ be the mapping cone on $R/K_{i-1} \xrightarrow{a_i} R/K_{i-1}$.
We have a canonical inclusion $\Kos'(a_1, \dots, a_i) \subset K''$.
Since $a_i$ is regular on $R/K_{i-1}$ \cite[Proposition 3.3(1)]{eisenbud2019duality}, $R'' \wequi R/(K_{i-1}+(a_i))$ is a ring and $K''$ is an $R''$-module; in fact $K''$ is a canonical module for $R''$ (and so in particular concentrated in degree $0$), since $\Kos'(a_1, \dots, a_{i-1})$ is for $R/K_{i-1}$ by induction (use \cite[Theorem 3.3.5]{bruns1998cohen}).
The commutative diagram
\begin{equation*}
\begin{tikzcd}
K'' \ar[r, "\partial"] & \Sigma \Kos'(a_1, \dots, a_{i-1}) \ar[r, "\Sigma\alpha_{i-1}"] & \Sigma^i R \\
\Kos'(a_1, \dots, a_i) \ar[u] \ar[rru, "\alpha_i" swap]
\end{tikzcd}
\end{equation*}
induces by adjunction (using Lemma \ref{lemm:adjoints} below\NB{$P = \Sigma^iR$, $A=R''=R/(K_{i-1}+(a_i))$, $B=R/K_i$, $M=K'$, $N=K''$}) a commutative diagram
\begin{equation} \label{eq:can-proof-2}
\begin{CD}
K'' @>{(\tmap_{i-1}^\dagger)'}>> \Sigma^i \Dual(R'') \wequi \omega_{R''} \\
@A{\iota'}AA            @A{\iota}AA     \\
\Kos'(a_1, \dots, a_{i}) @>{\tmap_{i}^\dagger}>> \Sigma^i \Dual(R/K_{i}) \wequi \omega_{R/K_{i}}.
\end{CD}
\end{equation}
The map $(\tmap_{i-1}^\dagger)'$ sits in a diagram of cofiber sequences
\begin{equation*}
\begin{CD}
\Kos'(a_1, \dots, a_{i-1}) @>{a_i}>> \Kos'(a_1, \dots, a_{i-1}) @>>> K'' \\
@V{\tmap_{i-1}^\dagger}VV          @V{\tmap_{i-1}^\dagger}VV @V(\tmap_{i-1}^\dagger)'VV \\
\Sigma^i \Dual(R/K_{i}) @>{a_i}>> \Sigma^i \Dual(R/K_{i}) @>>> \Sigma^i \Dual(R'')
\end{CD}
\end{equation*}
and hence is an isomorphism.
The map $\iota$ is dual to the projection $R'' \to R/K_{i}$ and is an injection with image $I \cdot \omega_{R''}$ \cite[Corollary 4.3]{eisenbud2019duality}.
The map $\iota'$ identifies with the natural map \[ I^{i-g+1}/J_iI^{i-g} \to I^{i-g}/(J_{i-1}I^{i-g-1}+a_iI^{i-g}) =: M \]
Its image is $I \cdot M$, and it is injective because \begin{gather*} I^{i-g+1} \cap (J_{i-1}I^{i-g-1}+a_iI^{i-g}) = I^{i-g+1} \cap J_{i-1}I^{i-g-1} + a_iI^{i-g} \\ \subset I^{i-g+1} \cap K_{i-1} + a_iI^{i-g} = J_iI^{i-g-1}, \end{gather*} where the last equality is by \cite[Proposition 3.3(5)]{eisenbud2019duality}.
Hence $\tmap_i$ is an isomorphism as desired.

\end{proof}

We made use of the following general $1$-categorical construction (which we apply to the homotopy category of $\Dcat(R)$).
For the notion of commutative algebra in a symmetric monoidal category, and categories of modules thereover, see \cite[4.1]{Brandenburg14} or \cite[1.2 and 1.3, Def 1.2.8, Def 1.2.10]{MartyF09}.
\begin{lemma}[Passage to adjoints] \label{lemm:adjoints}
Let $\scr C$ be a closed symmetric monoidal category with tensor product $\otimes$ and internal hom $\iHom$, $A \to B \in \CAlg(\scr C)$, $M$ a $B$-module, $N$ an $A$-module, and $P \in \scr C$.
Suppose given an $A$-module morphism $M \to N$ and a morphism $N \to P$.
Then the commutative diagram
\begin{equation*}
\begin{tikzcd}
M \ar[d] \ar[dr] \\
N \ar[r] & P
\end{tikzcd}
\end{equation*}
induces a canonical commutative diagram
\begin{equation*}
\begin{CD}
M @>>> \iHom(B, P) \\
@VVV        @VVV   \\
N @>>> \iHom(A, P).
\end{CD}
\end{equation*}
\end{lemma}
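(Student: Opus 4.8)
The plan is to build the two horizontal maps out of the given data by means of the tensor--hom adjunction, and then to verify that the resulting square commutes by passing to adjoints a second time and unwinding the definition of restriction of scalars. Throughout, write $\rho: A \to B$ for the algebra morphism, $a_N: A \otimes N \to N$ and $a_M: B \otimes M \to M$ for the module structures, $h: M \to N$ for the given $A$-module morphism, and $g: N \to P$ for the given morphism of $\scr C$. The bottom horizontal map $N \to \iHom(A,P)$ is defined to be the map adjoint to $g \circ a_N: A \otimes N \to P$ under $\scr C(A \otimes N, P) \cong \scr C(N, \iHom(A,P))$; the top horizontal map $M \to \iHom(B,P)$ is the map adjoint to $g \circ h \circ a_M: B \otimes M \to P$; the right vertical map is $\iHom(\rho, P): \iHom(B,P) \to \iHom(A,P)$; and the left vertical map is $h$ itself. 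Every ingredient here is built only from the given data and the structural isomorphisms of the closed symmetric monoidal category, so the construction is canonical.

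To check commutativity of the square, I would observe that a morphism $M \to \iHom(A,P)$ is the same datum as its adjoint $A \otimes M \to P$ under $\scr C(M, \iHom(A,P)) \cong \scr C(A \otimes M, P)$, so it suffices to compare the two adjoint maps $A \otimes M \to P$. Postcomposing the top map with $\iHom(\rho,P)$ corresponds, on adjoints, to precomposition with $\rho \otimes \id_M$, so the clockwise composite has adjoint $A \otimes M \xrightarrow{\rho \otimes \id} B \otimes M \xrightarrow{a_M} M \xrightarrow{h} N \xrightarrow{g} P$; but $a_M \circ (\rho \otimes \id_M)$ is, by definition, the $A$-module structure $a_M'$ on $M$ obtained by restriction of scalars along $\rho$, so this adjoint equals $g \circ h \circ a_M'$. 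The counterclockwise composite $M \xrightarrow{h} N \to \iHom(A,P)$ has adjoint $A \otimes M \xrightarrow{\id \otimes h} A \otimes N \xrightarrow{a_N} N \xrightarrow{g} P$, and since $h$ is an $A$-module morphism (for the structure $a_M'$ on $M$) we have $a_N \circ (\id \otimes h) = h \circ a_M'$, so this adjoint is again $g \circ h \circ a_M'$. The two agree, so the square commutes.

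There is no genuine obstacle here: the argument is a routine diagram chase, and the only points requiring care are tracking the direction of each instance of the tensor--hom adjunction and recalling that the restricted $A$-module structure on $M$ is precisely the composite $a_M \circ (\rho \otimes \id_M)$, which is exactly what forces the two adjoint maps to coincide. Since in the body of the paper the lemma is only ever invoked in the ordinary $1$-category underlying the homotopy category of $\Dcat(R)$, no higher coherence data needs to be supplied.
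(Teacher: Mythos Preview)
Your argument is correct. You build the two horizontal maps directly as adjoints of the composites $B\otimes M\to P$ and $A\otimes N\to P$, and then verify commutativity by transporting both paths through the adjunction $\scr C(M,\iHom(A,P))\cong\scr C(A\otimes M,P)$ and invoking the $A$-linearity of $h$. Each step is sound, and your remark that only $1$-categorical commutativity is needed (since the lemma is applied in the homotopy category of $\Dcat(R)$) is accurate.

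The paper's proof takes a slightly different route. Rather than transporting back through the adjunction, it factors each horizontal map through the ``coaction'' $M\to\iHom(B,M)$ (respectively $N\to\iHom(A,N)$) adjoint to the module action, and then expands the square into a larger diagram
\[
\begin{tikzcd}
                 & \iHom(B, M) \ar[r] \ar[d] & \iHom(B,P) \ar[dd] \\
M \ar[ru] \ar[r] \ar[d] & \iHom(A, M) \ar[d] \\
N \ar[r]                & \iHom(A, N) \ar[r] & \iHom(A, P)
\end{tikzcd}
\]
whose individual cells commute for formal reasons (naturality of $\iHom$ in each variable, and the fact that the coaction triangle for $M$ over $A$ versus $B$ commutes because the restricted action is $a_M\circ(\rho\otimes\id)$). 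Your approach trades this diagram expansion for an explicit adjoint computation; the paper's approach keeps everything on the $\iHom$ side and never writes down a map out of $A\otimes M$. Both are routine, and the underlying content---that $A$-linearity of $h$ is exactly what makes the two composites agree---is the same.
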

\begin{proof}\NB{In element notation, this statement is completely trivial...}
Expand the diagram as
\begin{equation*}
\begin{tikzcd}
                 & \iHom(B, M) \ar[r] \ar[d] & \iHom(B,P) \ar[dd] \\
M \ar[ru] \ar[r] \ar[d] & \iHom(A, M) \ar[d] \\
N \ar[r]                & \iHom(A, N) \ar[r] & \iHom(A, P).
\end{tikzcd}
\end{equation*}
All cells commute for trivial reasons.
\end{proof}

\subsection{Aside: explicit duality} \label{subsec:explicit}
The canonical trace maps of Theorem \ref{thm:canonical-trace} can be made more explicit, at least in some cases.
These results are not used in the sequel, and so the reader may wish to skip this subsection.
In what follows, we give an explicit formula for the trace map in a \emph{geometric} residual intersection, generalizing the work of Scheja--Storch \cite{scheja}.

\begin{remark}
Without the geometricity assumption, the most optimistic potential formulas for the trace forms are false.
This is discussed (with slightly different emphasis) in \cite[\S7]{eisenbud2019duality}.
\end{remark}

\begin{lemma} \label{lemm:explicit}
Notation and assumptions as in Theorem \ref{thm:canonical-trace}.
The following two maps are homotopic:
\begin{gather*} \Sigma^s \Dual(R/K) \otimes \det(J/KJ) \xrightarrow{(\tmap^\dagger)^{-1}} I^{t+1}/JI^t \otimes \det(J/KJ)^\dual \otimes \det(J/KJ) \\ \wequi I^{t+1}/JI^t \to R/J \\ \text{and} \\ \Sigma^s \Dual(R/K) \otimes \det(J/KJ) \stackrel{(1)}{\wequi} \Sigma^s \Dual(R/K) \xrightarrow{(2)} \Sigma^s \Dual \Kos(a_1, \dots, a_s) \\ \stackrel{(3)}{\wequi} \Kos(a_1, \dots, a_s) \xrightarrow{(4)} R/J. \end{gather*}
The last map of the first composite is induced by $I^{t+1} \to R \to R/J$.
In the second composite, $(1)$ trivializes $\det(J/KJ)$ using the generators $(a_1, \dots, a_s)$ of $J/KJ$, $(2)$ is dual to $\Kos(a_1, \dots, a_s) \to R/J \to R/K$, $(3)$ is the canonical self duality of the Koszul complex and $(4)$ is $\Kos(a_1, \dots, a_s) \to R/J$.

If the residual intersection is geometric, these maps (of $R$-modules concentrated in homotopical degree $0$) are injective.
\end{lemma}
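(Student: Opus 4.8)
The plan is to prove the two assertions — the homotopy and the injectivity — in turn.

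\textbf{Reduction of the homotopy to an equality of module maps.} First I would observe that both composites are morphisms in $\Dcat(R)$ whose source $\Sigma^s \Dual(R/K) \otimes \det(J/KJ)$ and target $R/J$ are concentrated in homotopical degree $0$: the source because $R/K$ is Cohen--Macaulay of codimension $s$, so $\Sigma^s \Dual(R/K)$ lies in degree $0$, and $\det(J/KJ)$ is an invertible $R/K$-module (Proposition \ref{prop:conormal-free}). Hence the relevant mapping space is discrete, equal to $\Hom_R$, and it suffices to show the two composites agree as maps of $R$-modules. Trivializing $\det(J/KJ)$ via the basis $da_1 \wedge \dots \wedge da_s$ (this is step $(1)$ of the second composite, and it identifies $I^{t+1}/JI^t \otimes_{R/K} \det(J/KJ)^\dual$ with $I^{t+1}/JI^t$), both composites become maps $\Sigma^s \Dual(R/K) \to R/J$; since $\tmap^\dagger$ is invertible, the claim reduces to the identity of $R$-module maps
\[ (4) \circ (3) \circ (2) \circ \tmap^\dagger = \rho \colon I^{t+1}/JI^t \to R/J, \]
where $\rho$ is induced by $I^{t+1} \hookrightarrow R \to R/J$.

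\textbf{Identifying $\tmap^\dagger$ with the Koszul self-duality.} Unwinding Theorem \ref{thm:canonical-trace}, $\tmap^\dagger$ is the adjoint of $\tmap \circ \mathrm{mult}$, and by Lemma \ref{lemm:I-resn} and Definition \ref{def:K'} the map $\tmap$ is the composite $I^{t+1}/JI^t \wequi \Kos'(a_1, \dots, a_s) \hookrightarrow \Kos(a_1, \dots, a_s) \to \Sigma^s R$, the last arrow being projection onto the top exterior power — equivalently, $\Sigma^s$ of the $R$-dual of the unit $R \to \Kos(a_1, \dots, a_s)$. Since $\Kos'(a_1, \dots, a_s)$ is a differential graded ideal of $\Kos(a_1, \dots, a_s)$ with $\pi_0 \wequi I^{t+1}/JI^t$, and since passing to adjoints turns step $(2)$ (precomposition of the $R/K$-factor with $\Kos(a_1, \dots, a_s) \to R/K$) into precomposition with the module structure over $\Kos(a_1, \dots, a_s)$, one expects to identify $(3) \circ (2) \circ \tmap^\dagger$ with the inclusion $I^{t+1}/JI^t \wequi \Kos'(a_1, \dots, a_s) \hookrightarrow \Kos(a_1, \dots, a_s)$, after which $(4)$ (the augmentation) produces exactly $\rho$. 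Because organizing the various dualities, shifts and adjunctions all at once is error-prone, I would instead run an induction on $i$, $g \le i \le s$, in lockstep with the proof of Theorem \ref{thm:canonical-trace}: the base case $i = g$ is precisely the commuting square \eqref{eq:can-proof-1}, in which the map called $\sigma$ there is the canonical Koszul self-duality of $\Kos(a_1, \dots, a_g)$, so the claim can be read off; the inductive step uses that $\Kos(a_1, \dots, a_i)$ is the cone of multiplication by $a_i$ on $\Kos(a_1, \dots, a_{i-1})$, under which the Koszul self-dualities and the augmentations for successive $i$ are compatible, so that the diagrams of cofiber sequences around \eqref{eq:can-proof-2} propagate the statement. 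I expect this step to be the main obstacle of the proof.

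\textbf{Injectivity in the geometric case.} Let $\phi \colon \Sigma^s \Dual(R/K) \otimes \det(J/KJ) \to R/J$ denote the common map, now a homomorphism of $R$-modules. It is injective if and only if it is injective after localizing at every associated prime $P$ of its source. Since the source is $\omega_{R/K}$ twisted by a line bundle, hence a faithful maximal Cohen--Macaulay $R/K$-module, its associated primes in $\Spec R$ are exactly the minimal primes of $K$; and $R/K$, being a Cohen--Macaulay local ring, is equidimensional, so each such $P$ satisfies $\codim P = \codim K = s$. The geometricity hypothesis $\codim(K + I) \ge s+1$ then forces $I \not\subseteq P$, so $I_P = R_P$ and hence $K_P = (J : I)_P = J_P$; thus $J_P$ is an ideal of codimension $s$ in the $s$-dimensional Cohen--Macaulay local ring $R_P$ generated by $s$ elements, hence by a regular sequence \cite[Theorem 2.1.2(c)]{bruns1998cohen}, so $\Kos(a_1, \dots, a_s)_P$ is a free resolution of $R_P/J_P = R_P/K_P$. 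Localizing the second composite at $P$: $(1)_P$ and $(3)_P$ are equivalences by construction, $(2)_P$ is an equivalence because $\Kos(a_1, \dots, a_s)_P \to R_P/K_P$ is a quasi-isomorphism, and $(4)_P$ is the augmentation quasi-isomorphism onto $R_P/J_P$; hence $\phi_P$ is an isomorphism, in particular injective. As this holds at every associated prime of the source, $\phi$ is injective. (In the geometric case this gives an explicit generalization of the Scheja--Storch trace \cite{scheja}, now valid without passing to a local ring.)
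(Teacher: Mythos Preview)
Your proof of the homotopy follows essentially the same route as the paper's: reduce to an equality of ordinary module maps, then argue by induction on $i$ with base case the commuting square \eqref{eq:can-proof-1} and inductive step via the cofiber-by-$a_i$ diagrams around \eqref{eq:can-proof-2}. There is, however, one missing step. The induction requires that each $K_{i-1}=J_{i-1}:I$ be a geometric $(i-1)$-residual intersection, which is exactly the definition of \emph{good} generators; but the lemma is stated for an arbitrary generating sequence $(a_1,\dots,a_s)$. The paper handles this by first checking that the second composite is itself independent of the choice of generators (a short $\det(M)$ computation using the effect of $\Kos(M)$ on the top exterior power and on the trivialization of $\det(J/KJ)$), so that one may then freely pass to good generators. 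Without this step your induction does not apply to a general $(a_1,\dots,a_s)$, and the conclusion for arbitrary generators does not follow from the good case alone.

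Your injectivity argument is correct but genuinely different from the paper's. The paper proves injectivity of $I^{t+1}/JI^t \to R/J$ in one line by citing \cite[Proposition 3.3(5)]{eisenbud2019duality}, which in the geometric case gives $I^{t+1}\cap K = JI^t$, whence $I^{t+1}\cap J \subset I^{t+1}\cap K = JI^t$ and the kernel vanishes. Your approach instead localizes at the associated primes of the source $\omega_{R/K}$, uses equidimensionality of the Cohen--Macaulay ring $R/K$ and the geometricity bound $\codim(K+I)>s$ to force $I_P=R_P$ at each such prime, and then observes that the second composite becomes an isomorphism there because $\Kos(a_1,\dots,a_s)_P$ resolves $R_P/J_P=R_P/K_P$. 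This is a clean, self-contained argument that avoids the external citation; the paper's proof is shorter but relies on a nontrivial result from \cite{eisenbud2019duality}.
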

\begin{proof}
Note that the first map does not depend on the choice of generators.
We first show the same holds for the second.
If $a_1', \dots, a_s'$ is a second set of generators of $J$, then writing the $a_i' = \sum_j M_{ij} a_j$ we obtain a map $f=\Kos(M): \Kos(a_1, \dots, a_s) \to \Kos(a_1', \dots, a_s')$.
The pairing \[ \Kos(a_1, \dots, a_s) \otimes \Kos(a_1, \dots, a_s) \xrightarrow{f \otimes f} \Kos(a_1', \dots, a_s') \otimes \Kos(a_1', \dots, a_s') \xrightarrow{m} \Sigma^s R \] is readily verified to be $\det(M)\cdot m$, where $m$ is the usual pairing (i.e. multiplication followed by projection).
This implies that the two maps \[ \Sigma^s \Dual(R/K) \to \Sigma^s \Dual \Kos(?) \wequi K \to R/J, \] where $\Kos(?)$ is either $\Kos(a_1,\dots,a_s)$ or $\Kos(a_1', \dots, a_s')$, differ precisely by $\det(M)$.
Using this one easily checks that when incorporating the twist by $\det(J/KJ)$, the second map in the statement indeed becomes independent of the choice of generators.

It follows that we may assume the generators are good, and we may ignore the twists.
By induction, we shall prove the statement for the ideal $J_i = (a_1, \dots, a_i)$, $g \le i \le s$.
In the case $i=g$, Diagram \eqref{eq:can-proof-1} precisely shows what we want (the two composites in question are obtained by traversing Diagram \eqref{eq:can-proof-1} from top right hand corner to bottom left hand corner in the two possible ways, inverting the horizontal isomorphisms).

Now we prove by induction on $i>g$ the remaining cases.
Consider the diagram (in which $/a_i$ means cofiber of multiplication by $a_i$)
\begin{equation*}
\begin{tikzcd}
\Sigma^i \Dual \Kos(a_1, \dots, a_i) \ar[r, equal] & \Sigma^i \Dual (\Kos(a_1,\dots,a_{i-1})/a_i) \ar[dr] \\
\Sigma^i \Dual(R/K_i) \ar[r] \ar[u] \ar[d] & \Sigma^i\Dual((R/K_{i-1})/a_i) \ar[u]\ar[d] & R/(J_{i-1}+(a_i)) = R/J_i \\
I^{i-g+1}/J_iI^{i-g} \ar[r] & I^{i-g}/(I^{i-g-1}+a_i I^{i-g}) \ar[ru].
\end{tikzcd}
\end{equation*}
The upper square commutes, the lower square commutes essentially by construction (see also Diagram \eqref{eq:can-proof-2}), and the triangle commutes by induction (it is obtained from the commuting triangle for $i-1$ by dividing out $a_i$).
This implies commutativity of the two longest paths, which is the desired statement.

Finally we prove the injectivity statement in the case of geometric residual intersections.
By \cite[Proposition 3.3(5)]{eisenbud2019duality} we have $I^{t+1} \cap J \subset I^{t+1} \cap K = JI^t$, which implies what we want.
\end{proof}

\begin{remark}
In the case of a geometric residual intersection, Lemma \ref{lemm:explicit} supplies an alternative construction of the canonical trace.
(Restrict the codomain of the second map to its image to obtain an isomorphism).
\end{remark}

We denote by \[ \bar\tmap: \Sigma^s \Dual(R/K) \to R/J \] the map considered in Lemma \ref{lemm:explicit} (trivializing the twist for a given choice of generators).
For modules $M, N$ over a complete local ring $R$, we denote by $M \hat\otimes N$ the completed tensor product.
\begin{proposition} \label{prop:explicit-formula}
Let $K = J:I$ be an $s$-residual intersection satisfying the strong hypothesis.
Suppose in addition that $R = A[[T_1, \dots, T_s]]$, $A$ is regular local and $A \to R/K$ is finite.
Pick generators $J=(a_1, \dots, a_s)$.

The kernel of $R\, \hat\otimes_A R \to R$ is generated by $T_i \otimes 1 - 1 \otimes T_i$ (as an ideal of $R\, \hat\otimes_A R$), and contains $a_i \otimes 1 - 1 \otimes a_i$.
Write \[ a_i \otimes 1 - 1 \otimes a_i = \sum_j b_{ij}(T_i \otimes 1 - 1 \otimes T_i), \] for some $b_{ij} \in R\, \hat\otimes_A R$.
Let \[ \bar \Delta \in R/K \otimes_A R/J \] be the image of $\det(b_{ij})$.
Write \[ \bar \Delta = \sum_k r_k \otimes i_k, \] with $r_k \in R/K$ and $i_k \in R/J$.
\begin{enumerate}
\item \label{it:ex_dual_1} The morphism \[ \beta: \Hom_A(R/K, A) \wequi \Sigma^s \Dual(R/K) \xrightarrow{\bar\tmap} R/J \] is given by \begin{equation} \label{eq:explicit-duality} \varphi \mapsto \sum_k \varphi(r_k) i_k. \end{equation}
\item \label{it:ex_dual_2} If the residual intersection is geometric and the generators are good, then $\bar \Delta$ is contained in the image of (the injective map) \[ R/K \otimes_A I^{t+1}/JI^t \to R/K \otimes_A R/J. \]
  In particular one may choose $i_k \in I^{t+1}/JI^t$ and the isomorphism \[ \Hom_A(R/K, A) \wequi \Sigma^s \Dual(R/K) \xrightarrow{(\tmap^\dagger)^{-1}} I^{t+1}/JI^t \] is given by \eqref{eq:explicit-duality}.
\end{enumerate}
\end{proposition}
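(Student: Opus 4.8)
The plan is to reduce to Lemma~\ref{lemm:explicit} and then run a Scheja--Storch style computation \cite{scheja} using the diagonal of $\Spec R$ over $\Spec A$. First one records that $R/K$ is a finite $A$-module which is Cohen--Macaulay of dimension $\dim A$ (as $\codim_R K = s$ and $\dim R = \dim A + s$), hence --- $A$ being regular local --- \emph{free} over $A$; moreover $\omega_{R/A}$ is free of rank one (on $dT_1\wedge\cdots\wedge dT_s$), so the dualizing complex of $R$ is $R[s]$, and standard duality for the finite morphism $A\to R/K$ (with $\omega_A\cong A$; see for example \cite[\S3.3]{bruns1998cohen}) furnishes the canonical identification $\Hom_A(R/K,A)\wequi\Sigma^s\Dual(R/K)$ of the statement, under which $\bar\tmap$ becomes the map $\beta$ of part~\ref{it:ex_dual_1}. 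By Lemma~\ref{lemm:explicit}, $\bar\tmap$ is the composite
\[ \Sigma^s\Dual(R/K) \to \Sigma^s\Dual\Kos(a_1,\dots,a_s) \to \Kos(a_1,\dots,a_s) \to R/J \]
of the dual of the augmentation $\Kos(a_1,\dots,a_s)\to R/J\to R/K$, the canonical self-duality of the Koszul complex (trivialized via the chosen generators), and the augmentation; so it remains to compute this composite.

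\emph{Step 1 (the diagonal).} Since $R=A[[T_1,\dots,T_s]]$, the kernel of the multiplication $\mu\colon R\hat\otimes_A R\to R$ is generated by the regular sequence $u_j:=T_j\otimes 1-1\otimes T_j$, so $\Kos_{R\hat\otimes_A R}(u_1,\dots,u_s)$ resolves $R$ over $R\hat\otimes_A R$. As $a_i\otimes 1-1\otimes a_i\in\ker\mu$, there are $b_{ij}$ with $a_i\otimes 1-1\otimes a_i=\sum_j b_{ij}u_j$ and an induced comparison morphism of Koszul complexes whose top-degree component is multiplication by $\det(b_{ij})$. Because $R/K$ is free over $A$, applying $R/K\hat\otimes_A(\ph)$ is exact and sends $\mu$ to the analogous map $R/K\hat\otimes_A R\to R/K$ with kernel again cut out by the (now relative) regular sequence $\{u_j\}$; reducing the remaining factor modulo $J$ identifies the image of $\det(b_{ij})$ in $R/K\otimes_A R/J$ with the element $\bar\Delta$ of the statement (the two tensor products agreeing since $R/K$ is finite over $A$).

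\emph{Step 2 (part~\ref{it:ex_dual_1}).} Unwinding the definitions, for $\varphi\in\Hom_A(R/K,A)\wequi\Sigma^s\Dual(R/K)$ the element $\bar\tmap(\varphi)\in R/J=\pi_0\Kos(a_1,\dots,a_s)$ is the image, under the Koszul self-duality isomorphism $\Hom_{\Dcat(R)}(\Kos(a_1,\dots,a_s),R[s])\wequi\pi_0\Kos(a_1,\dots,a_s)$, of $\varphi$ (regarded as a map $R/K\to R[s]$) precomposed with the augmentation $\Kos(a_1,\dots,a_s)\to R/K$. The Scheja--Storch manipulation now applies: since $R/K$ is not a complete intersection (only $R/J$ is, and $\Kos(a_1,\dots,a_s)$ does not resolve it either), one replaces the missing free resolution of $R/K$ over $R$ by the diagonal resolution of Step 1, and uses that the multiplication pairing on $\Kos(a_1,\dots,a_s)$ matches the evaluation pairing under the comparison morphism $(b_{ij})$; this identifies the above class with $(\varphi\otimes\id_{R/J})(\bar\Delta)=\sum_k\varphi(r_k)i_k$. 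Independence of the chosen decomposition of $\bar\Delta$ follows because $R/K\otimes_A R/J\to\Hom_A(\Hom_A(R/K,A),R/J)$ is an isomorphism, $R/K$ being finite free over $A$. This proves part~\ref{it:ex_dual_1}.

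\emph{Step 3 (part~\ref{it:ex_dual_2}) and the main obstacle.} Assume now the residual intersection geometric and the generators good. By \cite[Proposition 3.3(5)]{eisenbud2019duality} we have $I^{t+1}\cap J\subseteq JI^t$, so $I^{t+1}/JI^t\to R/J$ is injective and, by Lemma~\ref{lemm:explicit}, $\bar\tmap$ factors as $\Hom_A(R/K,A)\xrightarrow{(\tmap^\dagger)^{-1}}I^{t+1}/JI^t\hookrightarrow R/J$. Since $R/K$ is free over $A$, the map $R/K\otimes_A I^{t+1}/JI^t\to R/K\otimes_A R/J$ is injective with image the elements all of whose coordinates with respect to an $A$-basis of $R/K$ lie in $I^{t+1}/JI^t$; by part~\ref{it:ex_dual_1} these coordinates of $\bar\Delta$ (for the dual-basis functionals $\varphi$) are $\bar\tmap(\varphi)\in I^{t+1}/JI^t$, so $\bar\Delta\in R/K\otimes_A I^{t+1}/JI^t$ and one may take $i_k\in I^{t+1}/JI^t$; the formula for $(\tmap^\dagger)^{-1}$ then follows from part~\ref{it:ex_dual_1} applied to such a decomposition. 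The main obstacle will be Step 2: making the passage to the diagonal resolution compatible with the construction of the canonical trace in Theorem~\ref{thm:canonical-trace} and with the Koszul self-duality, while tracking signs and orientations and the asymmetry of the two reductions (modulo $K$ on the functional side, modulo $J$ on the value side), and verifying the completed tensor products behave as in the classical finite-over-$A$ case.
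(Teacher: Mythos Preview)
Your approach is the same as the paper's---a Scheja--Storch computation via the diagonal, using that $B:=R/K$ is finite free over $A$---and your Step~3 is essentially identical to the paper's proof of part~\ref{it:ex_dual_2}. The substance is indeed in your Step~2, which you flag as the main obstacle but leave as a sketch.

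The paper dispatches Step~2 not by a pairing-matching argument but by a direct lift. Rather than staying over $R\hat\otimes_A R$ and reducing, it passes immediately to $R\otimes_A B$ and observes that the Koszul complex $\Kos(T-t)$ on the regular sequence $T_j\otimes 1-1\otimes t_j$ is a projective $R$-resolution of $B$. The augmentation $\Kos(a)\to R/J\to R/K$ then lifts to a chain map $\zeta\colon\Kos(a)\to\Kos(T-t)$, and the paper specifies the lift by $\zeta(e_i)=\sum_j b_{ij}e_j'$. The map $\beta$ is then read off as the composite
\[
\Hom_A(B,A)\;\wequi\;\pi_{-s}\Hom_R(\Kos(T-t),R)\;\xrightarrow{\ \zeta^*\ }\;\pi_{-s}\Hom_R(\Kos(a),R)\;\wequi\;R/J,
\]
where the middle map in top degree is governed by $\Lambda^s\zeta=\det(b_{ij})$; the formula $\varphi\mapsto\sum_k\varphi(r_k)i_k$ drops out. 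So your ``comparison morphism of Koszul complexes over $R\hat\otimes_A R$'' is not quite the object needed: after base change to $R\otimes_A B$ it becomes a map $\Kos_{R\otimes_A B}(a_i\otimes 1-1\otimes\bar a_i)\to\Kos(T-t)$, whereas what one must produce is a lift from $\Kos_R(a)$ itself. Bridging that gap is exactly the content your sketch is missing; once the lift $\zeta$ is in hand, no further trick (and in particular no separate ``matching of the multiplication pairing with the evaluation pairing'') is required.
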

\begin{proof}
Put $B := R/K$.
Firstly, note that $B$ is flat over $A$ by \cite[Theorem 18.16]{Eisenbud_CommutativeAlgebra} (recall that $R/K$ is Cohen--Macaulay of codimension $s$).
The kernel of the multiplication having the described form is standard, as is the isomorphism $\Sigma^s \Dual(R/K) \wequi \Hom_A(R/K, A)$.

\ref{it:ex_dual_1} The map $R \otimes_A B \to B$ is surjective with kernel generated by $T_i \otimes 1 - 1 \otimes t_i$, where $t_i$ is the image of $T_i$ in $B$; indeed it is a base change of $R \hat\otimes_A R \to R$.
Since $B$ is finite flat over $A$, the dimension of $R \otimes_A B$ is the same as that of $R$, and hence $B$ has codimension $s$ on $R \otimes_A B$, just as on $R$.
It follows that the $T_i \otimes 1 - 1 \otimes t_i$ form a regular sequence on $R \otimes_A B$; denote by $\Kos(T-t)$ the corresponding Koszul complex.
This is a resolution of $B$ by free $(R \otimes_A B)$-algebras, whence projective $R$-algebras.
Denote by $\Kos(a)$ the Koszul complex of the sequence $a_1, \dots, a_s$.
Since $\Kos(T-t)$ is a projective resolution, the composite $\Kos(a) \to R/J \to R/K$ lifts along $R/K \xleftarrow{\wequi} \Kos(T-t)$ to a map $\zeta: \Kos(a) \to \Kos(T-t)$.
By definition the map $\beta$ is obtained as \[ \Hom_A(B, A) \wequi \pi_0 \Sigma^s \Dual(R/K) \wequi \pi_{-s} \Hom_R(\Kos(T-t), R) \xrightarrow{\zeta^*} \pi_{-s} \Hom_R(\Kos(a), R) \wequi R/J. \]
Recall that $\Kos(a)$ is defined as an exterior algebra over $R$; call the generators $e_1, \dots, e_s$.
Similarly denote the exterior algebra generators of $\Kos(T-t)$ over $R \otimes_A B$ by $e_i'$.
The map specified by $\zeta(e_i) = \sum_j b_{ij} e_j'$ is a lift of the desired form.
The claim now follows by direct computation.

\ref{it:ex_dual_2} By Lemma \ref{lemm:explicit}, in the case of a geometric residual intersection the map $I^{t+1}/JI^t \to R/J$ is injective, its image coincides with the image of $\beta$, and $(\tau^\dagger)^{-1}$ is obtained from $\beta$ by restricting the range.
Since $B$ is flat over $A$, $B \otimes_A I^{t+1}/JI^t \to B \otimes_A R/J$ is also injective.
In fact $B$ is free over $A$, say with basis $b_1, \dots, b_n$.
Then we can write $\bar\Delta = \sum_k b_k \otimes i_k$, with $i_k \in R/J$ uniquely determined.
Since $\beta(b_k^\dual) = i_k \in I^{t+1}/JI^t$, the result follows.
\end{proof}

\subsection{Globalization} \label{subsec:globalization}
We now globalize the previous results.
For appropriate schemes $X$ (generalizing $\Spec(R)$) and quasi-coherent ideal sheaves $I \subset J \subset \scr O_X$ with quotient ideal sheaf $K=J:I$, we shall define what it means for this to be a (geometric) residual intersection.
We will define a locally constant function $\ul{t}$ on $X$ corresponding to the number $t=s-g$ from the previous subsections.
(We will also show that even though in the previous formulation it looked like there was a choice, the number $t$ is actually determined by $J$ and $I$.)
Then we construct a coherent sheaf $I^{\ul t+1}/JI^{\ul t}$ on $X$ (if $U \subset X$ is open and $\ul t$ is constant on $U$ with value $t$, then the restriction of $I^{\ul t+1}/JI^{\ul t}$ to $U$ is given by $I^{t+1}/JI^t$) and show that there is a global duality isomorphism for $\scr O_X/K$ involving $I^{\ul t+1}/JI^{\ul t} \otimes \det(J/JK)^\dual$.

\subsubsection{The functions $s, g, t$}
Let $R$ be a Gorenstein noetherian local ring and $J \subset I$ ideals.
Assume that $R \ne K$, where $K := J:I$.
Suppose that $s' \ge s \ge \codim I$ and that $(J,I)$ is both an $s$-residual intersection satisfying the weak hypothesis and an $s'$-residual intersection satisfying the weak hypothesis.
It follows from Proposition \ref{prop:conormal-free} that the module $J/JK$ is free of rank both $s$ and $s'$ over the non-zero commutative ring $R/K$, and hence $s=s'$.
In other words, the ``parameter'' $s$ in an $s$-residual intersection satisfying the weak hypothesis is actually determined by the pair $(J,I)$.\NB{In particular there is \emph{no} choice.}
This justifies the following.
\begin{definition}
Let $R$ be a noetherian local ring and $J \subset I$ ideals.
We put $s(J,I) = \mu(J)$, $g(J,I) = \codim I$, $t(J,I) = \max\{s(J,I)-g(J,I),-1\}$.
We call $K=J:I$ a \emph{residual intersection} if $R \ne K$ and $\codim K \ge s(J,I)$.
We call the residual intersection \emph{geometric} if $\codim (K+I) > s(J,I)$.
We say that $(J,I)$ satisfies condition $G$ if $I$ satisfies $G_{s(J,I)}$, and we say that $(J,I)$ satisfies condition $E_i$ if condition $D_{t(J,I)+i}$ holds for $I$ (this is vacuous if $t(J,I)+i \le 0$).
We say that $(J,I)$ satisfies the standard (respectively weak or strong) condition if it satisfies conditions $G$ and $E_0$ (respectively $E_{-1}, E_1$).
\end{definition}

This definition is slightly more general than what we had before, because we do not require that $s(J,I) > g(J,I)$; equivalently we allow $t(J,I)=-1$.
The following result shows that the additional generality is largely illusory.
\begin{lemma} \label{lem:degenerate-case}
Suppose that $K=J:I$ is a residual intersection with $t(J,I) = -1$.
Then $J$ is generated by a regular sequence and $J=K$.
\end{lemma}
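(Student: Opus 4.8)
The plan is to pin down $\codim J$ numerically from the hypothesis $t(J,I) = -1$, and then read off both conclusions from there.

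\textbf{Step 1 (codimension bookkeeping).} By definition $t(J,I) = \max\{s(J,I) - g(J,I),\, -1\} = -1$ says precisely that $s := \mu(J) = s(J,I) \le g(J,I) - 1 < \codim I$. Krull's height theorem caps $\codim J$ from above: $\codim J \le \mu(J) = s$. For a lower bound I will use the set-theoretic identity $Z(J) = Z(K) \cup Z(I)$ in $\Spec(R)$ from \S\ref{subsec:quotient-geometric}, which gives $\codim J = \min(\codim K, \codim I)$; since $\codim I > s \ge \codim J$, this forces $\codim K = \codim J$ and in particular $\codim K \le s$. Combined with the defining inequality $\codim K \ge s$ of a residual intersection, I conclude $\codim J = \codim K = s$.

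\textbf{Step 2 ($J$ is a complete intersection).} Now $J$ is generated by $s = \codim J$ elements in the Gorenstein — in particular Cohen--Macaulay — local ring $R$, so these generators form a regular sequence by \cite[Theorem 2.1.2(c)]{bruns1998cohen}.

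\textbf{Step 3 ($J = K$).} The inclusion $J \subseteq J:I = K$ is automatic. For the reverse inclusion, $R/J$ is the quotient of a Cohen--Macaulay ring by a regular sequence, hence Cohen--Macaulay, hence unmixed; so every associated prime of $R/J$ has height $s$ in $R$. Since $\codim I > s$, none of these primes contains $I$, and prime avoidance produces $x \in I$ whose image in $R/J$ is a nonzerodivisor. If $r \in K$ then $rx \in rI \subseteq J$, so the image of $r$ in $R/J$ is killed by the nonzerodivisor $\bar x$ and hence is $0$, i.e.\ $r \in J$. Thus $K \subseteq J$ and $J = K$.

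I do not expect a genuine obstacle: the crux is the squeeze in Step 1, where Krull's theorem bounds $\codim J$ from above and the residual-intersection condition bounds $\codim K$ from below, the two meeting exactly at $s$. The only points deserving a second glance are the degenerate subcase $s = 0$, where $J = 0$, $g(J,I) \ge 1$, $K = \mathrm{Ann}(I)$, and the argument goes through unchanged (the empty sequence is regular and $\mathrm{Ann}(I) = 0$ since $I$ has positive height in the Cohen--Macaulay ring $R$); and the fact that under the weakened notion of residual intersection in force here none of the $G_s$ or $D_t$ hypotheses are needed.
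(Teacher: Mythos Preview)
Your proof is correct and follows essentially the same approach as the paper's: both squeeze $\codim J$ between Krull's principal ideal theorem and the residual-intersection bound via $Z(J)=Z(K)\cup Z(I)$, then use Cohen--Macaulayness of $R/J$ to find a regular element in $I/J$ (you phrase this via unmixedness and prime avoidance, the paper via positive codimension implying positive grade) and conclude $K/J=0$.
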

\begin{proof}
We have $\codim I > \mu(J) \ge \codim J$ (the former by assumption and the latter by the principal ideal theorem \cite[Theorem 10.2]{Eisenbud_CommutativeAlgebra}).
Hence $\codim K = \codim J$ (see the end of \S\ref{subsec:quotient-geometric}) and so $\codim J = \mu(J)$, which implies that $J$ is generated by a regular sequence and that $R/J$ is Cohen--Macaulay \cite[Proposition 18.13]{Eisenbud_CommutativeAlgebra} \cite[Theorem 2.1.2(c)]{bruns1998cohen}.
Now $I/J$ has positive codimension on $R/J$, and so contains a regular element \cite[Theorem 18.7]{Eisenbud_CommutativeAlgebra}.
It follows that $K/J = (0) :_{R/J} I/J = 0$, as needed.
\end{proof}
\begin{remark} \label{rmk:generalized-res-int}
It follows that Proposition \ref{prop:conormal-free} (freeness of the canonical module), Lemma \ref{lemm:I-resn} (exactness of the modified Koszul complex) and Theorem \ref{thm:canonical-trace} (existence of canonical traces) remain true in the above situation (i.e. $t<0$), provided that we interpret $I^0/JI^{-1}$ to mean $R/K$ (which is the same as $R/J$).
In other words $J/J^2$ is a free $R/J$-module on $a_1, \dots, a_s$ \cite[Theorem 1.1.8]{bruns1998cohen}, $\Kos(a_1, \dots, a_s) \wequi R/J$ \cite[Corollary 17.5]{Eisenbud_CommutativeAlgebra}, and $\Sigma^s \Dual(R/J) \wequi R/J$ via the self-duality of $\Kos(a_1, \dots, a_s)$ \cite[Proposition 17.15]{Eisenbud_CommutativeAlgebra}.
\end{remark}

\subsubsection{Semicontinuity of $s, g, t$}
We globalize the previous definitions as follows.
\begin{definition}
Let $X$ be a locally noetherian scheme and $J \subset I$ quasi-coherent ideal sheaves.
Define functions $\ul{s} = \ul{s}(J,I), \ul{g} = \ul{g}(J,I), \ul{t} = \ul{t}(J,I)$ on $Z(J:I)$ by $\ul{s}(J,I)(x) = s(J_x, I_x)$ and so on.
We call $K=J:I$ a \emph{(geometric) residual intersection} if for every $x \in Z(K)$, $(J_x,I_x)$ is a (geometric) residual intersection.
We define similarly the conditions $G$, $E_i$ as well as the weak, standard and strong hypotheses by requiring them to hold pointwise on $Z(K)$.
\end{definition}

In good cases, all of the above properties are open on $X$, and hence may be checked on closed points (since $X$ is locally noetherian, any point has a closed specialization \cite[Tag 02IL]{stacks-project}).
We begin with the following preparation.
\begin{lemma} \label{lemm:semicont}
Let $X$ be a locally noetherian scheme and $Z \subset X$ closed.
Consider the following functions
\begin{enumerate}
\item \label{codimI} $x \mapsto \codim_{\scr O_{X,x}} I(Z)_x$
\item \label{muI} $x \mapsto \mu_{\scr O_{X,x}}(I(Z)_x)$, i.e. the minimal number of generators
\item \label{depth-dim} $x \mapsto \depth \scr O_{Z,x} - \dim \scr O_{Z,x}$.
\end{enumerate}
Then \ref{codimI} is lower semicontinuous, \ref{muI} is upper semicontinuous, and \ref{depth-dim} is lower semicontinuous provided that $X$ locally embeds into a regular scheme.
\end{lemma}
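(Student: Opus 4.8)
The plan is to prove the three parts essentially separately, deducing the statement about depth from the one about codimension via Auslander--Buchsbaum; since semicontinuity is a local property we may freely shrink $X$. For \ref{codimI}: for $x \in Z$ the minimal primes of $I(Z)_x$ in $\scr O_{X,x}$ correspond bijectively to the irreducible components $W$ of $Z$ through $x$, and the height of such a prime is $\codim(W,X)$; hence $\codim_{\scr O_{X,x}} I(Z)_x = \min\{\codim(W,X) : W \text{ a component of } Z,\ x \in W\}$. As $X$ is locally noetherian the components of $Z$ are locally finite and each closed, so for every $n$ the set $\{x \in Z : \codim_{\scr O_{X,x}} I(Z)_x \le n\}$ is the union of those components of codimension $\le n$, a locally finite union of closed sets, hence closed; that is lower semicontinuity. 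For \ref{muI}: by Nakayama's lemma $\mu_{\scr O_{X,x}}(I(Z)_x) = \dim_{\kappa(x)} \big(I(Z) \otimes_{\scr O_X} \kappa(x)\big)_x$, and $I(Z)$ is coherent since $X$ is locally noetherian, so this is upper semicontinuous by the usual upper semicontinuity of fibre dimensions of a coherent sheaf (e.g.\ \cite[Exercise II.5.8]{hartshorne2013algebraic}).

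For \ref{depth-dim}, working locally I would assume $Z$ is a closed subscheme of a regular (hence locally noetherian) scheme $Y$. Fix $x \in Z$, set $A = \scr O_{Y,x}$ (regular local) and $B = \scr O_{Z,x} = A/\mathfrak a$, where $\mathfrak a \subset A$ is the stalk of the ideal sheaf of $Z$ in $Y$. Since $A$ is regular, $\mathrm{pd}_A B < \infty$, so Auslander--Buchsbaum gives $\depth B = \dim A - \mathrm{pd}_A B$ \cite[Theorem 1.3.3]{bruns1998cohen}, and since $A$ is Cohen--Macaulay, $\dim B = \dim A - \codim_A \mathfrak a$ \cite[Corollary 2.1.4]{bruns1998cohen}. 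Subtracting,
\[ \depth \scr O_{Z,x} - \dim \scr O_{Z,x} = \codim_{\scr O_{Y,x}} I(Z)_x - \mathrm{pd}_{\scr O_{Y,x}} \scr O_{Z,x}. \]
The first term on the right is lower semicontinuous by \ref{codimI} applied to $Z \subset Y$, so it suffices to show $x \mapsto \mathrm{pd}_{\scr O_{Y,x}} \scr O_{Z,x}$ is upper semicontinuous; then $\depth - \dim$ is a sum of two lower semicontinuous functions, hence lower semicontinuous.

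For upper semicontinuity of projective dimension, fix $x_0 \in Z$ and choose a minimal free resolution of $\scr O_{Z,x_0}$ over $\scr O_{Y,x_0}$; because $\scr O_{Y,x_0}$ is regular this is finite, of length $\ell := \mathrm{pd}_{\scr O_{Y,x_0}} \scr O_{Z,x_0}$. Spreading it out over a suitable neighbourhood $U$ of $x_0$ to a bounded complex of finite free $\scr O_U$-modules augmented to $\scr O_Z|_U$ (viewed as a coherent $\scr O_Y$-module), and shrinking $U$ so that the homology sheaves of this augmented complex vanish (their supports are closed and avoid $x_0$), one obtains a length-$\ell$ finite free resolution of $\scr O_Z|_U$. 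Localizing at any $x \in U$ gives a finite free $\scr O_{Y,x}$-resolution of $\scr O_{Z,x}$ of length $\ell$, whence $\mathrm{pd}_{\scr O_{Y,x}} \scr O_{Z,x} \le \ell = \mathrm{pd}_{\scr O_{Y,x_0}} \scr O_{Z,x_0}$, as desired.

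The only step I expect to need real care is this upper semicontinuity of projective dimension in \ref{depth-dim}, together with the indispensable role of the hypothesis that $X$ locally embeds into a regular scheme: finiteness of $\mathrm{pd}_A B$ is exactly what makes the Auslander--Buchsbaum rewriting possible, and without it the conclusion of \ref{depth-dim} can fail. One should also observe that the two summands $\codim_{\scr O_{Y,x}} I(Z)_x$ and $\mathrm{pd}_{\scr O_{Y,x}} \scr O_{Z,x}$ individually depend on the (only locally defined) embedding $Z \hookrightarrow Y$ whereas their difference does not; since semicontinuity is local this causes no trouble.
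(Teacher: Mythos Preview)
Your proof is correct. The paper's own proof is simply three citations---to EGA\,IV (0.14.2.6(ii)) for \ref{codimI}, to Hartshorne (Example III.12.7.2) for \ref{muI}, and to EGA\,IV (6.11.2(i)) for \ref{depth-dim}---so your argument is strictly more informative. For \ref{codimI} and \ref{muI} you are essentially unpacking what lies behind those references. For \ref{depth-dim} your route via Auslander--Buchsbaum, rewriting $\depth - \dim$ as $\codim_Y - \operatorname{pd}_Y$ and then combining \ref{codimI} (applied to $Z \subset Y$) with upper semicontinuity of projective dimension proved by spreading out a finite free resolution, is a clean and self-contained alternative to invoking EGA directly; it also makes transparent exactly where the regular-embedding hypothesis enters (finiteness of $\operatorname{pd}_A B$). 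Your closing remark that the two summands depend on the choice of regular ambient $Y$ while their difference does not is well taken and worth keeping.
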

\begin{proof}
\ref{codimI} is \cite{EGAIV}, Corollaire 0.14.2.6(ii)], \ref{muI} is \cite[Example III.12.7.2]{hartshorne2013algebraic}, and \ref{depth-dim} is \cite[Proposition 6.11.2(i)]{EGAIV}.
\end{proof}

\begin{corollary} \label{cor:resint-open}
Let $X$ be a locally noetherian scheme and $J \subset I$ quasi-coherent sheaves of ideals.
Consider the subsets of $X$ consisting of those points $x \in X$ satisfying one of the following conditions (for some fixed $i$)
\begin{enumerate}
\item \label{it:resint-open:1} $(J_x,I_x)$ is a (geometric) residual intersection.
\item \label{it:resint-open:2} $I_x$ satisfies condition $G_i$
\item \label{it:resint-open:3} $(J_x,I_x)$ satisfies condition $G$
\item \label{it:resint-open:4} $t(J_x,I_x) < i$
\item \label{it:resint-open:5} $I_x$ satisfies condition $D_i$
\item \label{it:resint-open:6} $(J_x,I_x)$ satisfies condition $E_i$
\end{enumerate}
Then the sets \ref{it:resint-open:1}, \ref{it:resint-open:2}, \ref{it:resint-open:3}, \ref{it:resint-open:4} are open, and \ref{it:resint-open:5}, \ref{it:resint-open:6} are open provided that $X$ locally embeds into a regular scheme.
\end{corollary}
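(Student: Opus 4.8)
The plan is to reduce every clause to Lemma \ref{lemm:semicont} by rewriting the condition in terms of codimensions of coherent ideal sheaves and of the function $x\mapsto \depth\scr O_{Z,x}-\dim\scr O_{Z,x}$ for suitable closed subschemes $Z$. First I would record the consequences for the functions $\ul s,\ul g,\ul t$: the function $x\mapsto \codim_{\scr O_{X,x}} I_x$ (that is, $\ul g$) is lower semicontinuous by Lemma \ref{lemm:semicont}\ref{codimI}, and $x\mapsto \mu(J_x)$ (that is, $\ul s$) is upper semicontinuous — Lemma \ref{lemm:semicont}\ref{muI} is the case $\scr F=I(Z)$ of the upper semicontinuity of $x\mapsto \mu(\scr F_x)$ for any coherent sheaf $\scr F$, which here we apply to $\scr F=J$ — so $\ul t=\max(\ul s-\ul g,-1)$ is upper semicontinuous (with values in $\Z_{\ge -1}$). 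Clause \ref{it:resint-open:4} is now immediate, since $\{x:\ul t(J_x,I_x)<i\}$ is the locus where an upper semicontinuous function is $<i$, hence open.

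For clause \ref{it:resint-open:2} I would pass to Fitting ideals. Let $\mathrm{Fitt}_j(I)\subseteq\scr O_X$ denote the $j$-th Fitting ideal sheaf of the coherent sheaf $I$; it is coherent, commutes with localization, and $\mu(I_P)\ge j+1$ holds exactly when $P\supseteq\mathrm{Fitt}_j(I)$. A short argument then shows that $I_x$ satisfies $G_i$ if and only if $\codim_{\scr O_{X,x}}\!\left((I+\mathrm{Fitt}_j(I))_x\right)\ge j+1$ for all $0\le j\le i-1$; the extra summand $I$ is needed in order to discard the spurious codimension-$0$ primes not containing $I$. Each of these finitely many conditions is open by Lemma \ref{lemm:semicont}\ref{codimI}, applied with $Z=Z(I+\mathrm{Fitt}_j(I))$, so \ref{it:resint-open:2} follows. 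Clause \ref{it:resint-open:5} is handled analogously: since $\dim(\scr O_{X,x}/I_x^j)=\dim(\scr O_{X,x}/I_x)$, condition $D_i$ for $I_x$ reads exactly $\depth\scr O_{Z_j,x}-\dim\scr O_{Z_j,x}\ge 1-j$ for all $1\le j\le i$, with $Z_j:=Z(I^j)$; each of these loci is open by Lemma \ref{lemm:semicont}\ref{depth-dim} (this is where the hypothesis that $X$ locally embeds into a regular scheme enters), and a finite intersection of open sets is open.

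Clauses \ref{it:resint-open:3} and \ref{it:resint-open:6} differ from \ref{it:resint-open:2} and \ref{it:resint-open:5} only in that the index controlling the condition — $s(J_x,I_x)=\mu(J_x)$, respectively $t(J_x,I_x)+i$ — varies with $x$. The point is that this index is upper semicontinuous, so near a fixed point only finitely many of the relevant open conditions occur and all of them already hold at that point. Concretely, if $(J,I)$ satisfies condition $G$ at $x$, put $m=\mu(J_x)$; then $\{y:\mu(J_y)\le m\}\cap\bigcap_{j=0}^{m-1}\{y:\codim((I+\mathrm{Fitt}_j(I))_y)\ge j+1\}$ is an open neighborhood of $x$ contained in the $G$-locus, because for any $y$ in it one has $\mu(J_y)\le m$ and the inequalities $\codim((I+\mathrm{Fitt}_j(I))_y)\ge j+1$ hold for all $j\le m-1$, hence in particular for all $j\le\mu(J_y)-1$. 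The argument for \ref{it:resint-open:6} is the same with $\mu(J)$ replaced by $t(J,I)+i$ and the Fitting conditions replaced by the $\depth-\dim$ conditions of clause \ref{it:resint-open:5}. Finally, for \ref{it:resint-open:1}: $(J_x,I_x)$ is a residual intersection iff $\codim K_x\ge\mu(J_x)$ (together with $K_x\ne\scr O_{X,x}$), and it is geometric iff in addition $\codim(K+I)_x>\mu(J_x)$; each of the two codimension inequalities is an open condition, being of the form $\{f\ge 0\}$ respectively $\{f>0\}$ for an integer-valued lower semicontinuous $f$, namely the difference of the codimension of a coherent ideal sheaf (Lemma \ref{lemm:semicont}\ref{codimI}) and $\mu(J)$ — so the (geometric) residual intersection locus is open in $Z(K)$, which is the locus on which $\ul s,\ul g,\ul t$ and these conditions are defined in the first place.

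I expect the only genuine work to be the Fitting-ideal reformulation of $G_i$ in \ref{it:resint-open:2} — pinning down the correct range of $j$ and the role of the extra summand $I$ — together with the neighborhood bookkeeping for the variable indices in \ref{it:resint-open:3} and \ref{it:resint-open:6}. Everything else is a direct appeal to Lemma \ref{lemm:semicont}, using the standard fact that $\{f\ge c\}$ (respectively $\{f<c\}$) is open when $f$ is an integer-valued lower (respectively upper) semicontinuous function.
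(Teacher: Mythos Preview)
Your proposal is correct and follows essentially the same approach as the paper: both reduce every clause to Lemma~\ref{lemm:semicont}, and your Fitting-ideal reformulation of $G_i$ is exactly the paper's condition $\codim(C_j\cap Z(I))\ge j$ rewritten (since $C_j=Z(\mathrm{Fitt}_{j-1}(I))$), while your explicit-neighborhood argument for \ref{it:resint-open:3} and \ref{it:resint-open:6} unwinds the paper's ``locally finite intersection of open conditions'' in the same way. Your care about the clause $K_x\ne\scr O_{X,x}$ in \ref{it:resint-open:1} is a point the paper's proof glosses over; in practice the intended reading is that the codimension inequality is vacuously satisfied when $K_x=\scr O_{X,x}$, so the locus is open in $X$ rather than merely in $Z(K)$.
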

\begin{proof}
\ref{it:resint-open:1} The condition is $\codim K_x - \mu(J_x) \ge 0$, and in addition $$\codim (K+I)_x - \mu(J_x) > 0$$ in the geometric case.
The result holds since these functions are lower semicontinuous by Lemma \ref{lemm:semicont}\ref{codimI}\ref{muI}.

\ref{it:resint-open:2} Let $C_i$ be the locus of points where $I$ requires at least $i$ generators.
Then $C_i$ is closed by Lemma \ref{lemm:semicont}\ref{muI}.
One checks that condition $G_i$ is equivalent to $\codim (C_j \cap Z(I)) \ge j$ for $j \le i$ (see also \cite[p. 312]{ArtinNagata72}, where this is used as the definition of $G_s$), whence the result follows from Lemma \ref{lemm:semicont}\ref{codimI}.

\ref{it:resint-open:3} Condition $G$ is the conjunction of the following conditions for $i \ge 0$: ($\mu(J) < i$ or $G_i$).
Each of these is open by \ref{it:resint-open:2} and Lemma \ref{lemm:semicont}\ref{muI}.
Since $\mu(J)$ is bounded near every point, the intersection is finite, and hence also open.

\ref{it:resint-open:4} The function $t(J_x,I_x)$ is upper semicontinuous by Lemma \ref{lemm:semicont}\ref{codimI}\ref{muI}, whence the result.

\ref{it:resint-open:5} Since $\dim(R/I) = \dim(R/I^j)$ for all $j \ge 1$ \cite[\S8.1 Axiom D2]{Eisenbud_CommutativeAlgebra}, the condition is equivalent to $\depth(R/I^j) - \dim(R/I^j) \ge 1-j$, which is open by Lemma \ref{lemm:semicont}\ref{depth-dim}.

\ref{it:resint-open:6} $E_i$ is the conjunction of the following conditions: ($t(J,I) < j$ or $D_{j+i}$).
Again these conditions are open separately by \ref{it:resint-open:4} and \ref{it:resint-open:5}, and the intersection is locally finite.

\end{proof}

\subsubsection{Local constancy of $s, g, t$}
We have seen (Lemma \ref{lemm:semicont}) that the functions $\ul{s}, -\ul{g}$ and $\ul{t}$ are upper semicontinuous.
In the case of a residual intersection satisfying the strong hypothesis, they are even locally constant.

We begin with a categorified version of local constantness of $\ul{s}$.
\begin{proposition}\label{pr:global_J/KJ_loc_free}
Let $X$ be a Gorenstein scheme\footnote{So by definition $X$ is locally noetherian.} and $J \subset I$ quasi-coherent ideal sheaves.
Put $K = J:I$ and $\schemeW=Z(K)$.
Assume that for every closed point $w \in \schemeW$, $(J_w,I_w)$ is a residual intersection satisfying the weak hypothesis.

Then $J/JK$ is a locally free $\scr O_\schemeW$-module.
In fact, if $a_1, \dots, a_s$ is a minimal generating set for $J$ locally around some (not necessarily closed) point $w \in \schemeW$, then the $a_i$ determine a local basis for $J/JK$.
\end{proposition}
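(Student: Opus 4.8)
The plan is to reduce the statement to Proposition \ref{prop:conormal-free}, which already computes the stalks of $J/JK$, by a routine globalization. First I would note that the second assertion implies the first: given any $w \in \schemeW$, a lift of a $\kappa(w)$-basis of $J_w/\mathfrak{m}_w J_w$ generates $J_w$ minimally (Nakayama), and, $J$ being coherent on the locally noetherian scheme $X$, extends to sections $a_1, \dots, a_s$ generating $J$ on an open neighbourhood of $w$, with $s = \mu(J_w)$. So it is enough to show: whenever $a_1, \dots, a_s$ generate $J$ on an open $V \ni w$ with $s = \mu(J_w)$, they form an $\scr O_\schemeW$-basis of $J/JK$ on some neighbourhood of $w$.

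To this end, observe that since $KJ = JK$ the sheaf $J/JK$ is naturally an $\scr O_X/K = \scr O_\schemeW$-module, and it is coherent because $X$ is locally noetherian. The chosen generators induce a surjection of coherent $\scr O_\schemeW$-modules $\phi \colon \scr O_\schemeW^{\,s} \to J/JK$ over $V \cap \schemeW$, whose kernel $N$ is again coherent, so $\mathrm{Supp}(N)$ is closed. It therefore suffices to prove that $N_w = 0$, equivalently that $\phi_w \colon (\scr O_{X,w}/K_w)^{\,s} \to J_w/J_w K_w$ is an isomorphism: for then $w \notin \mathrm{Supp}(N)$, so $N$ vanishes on a neighbourhood of $w$ and $\phi$ is an isomorphism there, which is the desired conclusion.

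The remaining input is Proposition \ref{prop:conormal-free} applied to $(J_w, I_w)$: it says precisely that if $a_1, \dots, a_s$ generate $J_w$ then $J_w/J_w K_w$ is free on their images, i.e. that $\phi_w$ is an isomorphism. To apply it I must know that $(J_w, I_w)$ is a residual intersection satisfying the weak hypothesis, which is assumed only for $w$ closed in $\schemeW$. Here I would invoke Corollary \ref{cor:resint-open}: the locus $U \subseteq X$ of points where $(J_x, I_x)$ is a residual intersection satisfying the weak hypothesis is open. By hypothesis $U$ contains every closed point of $\schemeW$; being open, $U$ is stable under generization; and since $\schemeW$ is locally noetherian, every point of $\schemeW$ specializes to a point closed in $\schemeW$ (reduce to an affine open of $\schemeW$, which is quasi-compact, and apply \cite[Tag 02IL]{stacks-project}). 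Hence every point of $\schemeW$ is a generization of some closed point lying in $U$, so $\schemeW \subseteq U$; in particular $w \in U$, and we are done.

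The main (and essentially only non-formal) obstacle is this last propagation step: upgrading ``weak hypothesis at the closed points of $\schemeW$'' to ``weak hypothesis on all of $\schemeW$''. It rests on the openness packaged in Corollary \ref{cor:resint-open} together with the elementary fact that an open subset is stable under generization, hence contains all of $\schemeW$ once it contains the closed points. Everything else — the $\scr O_\schemeW$-module structure on $J/JK$, coherence of the kernel so that its support is closed, and the observation that a surjection of coherent sheaves which is a stalkwise isomorphism at $w$ is an isomorphism near $w$ — is bookkeeping.
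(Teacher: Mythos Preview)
Your overall strategy is sound, but there is a genuine gap in the propagation step. You invoke Corollary \ref{cor:resint-open} to conclude that the locus where $(J_x,I_x)$ is a residual intersection satisfying the weak hypothesis is open. However, the weak hypothesis involves condition $E_{-1}$, and by Corollary \ref{cor:resint-open}\ref{it:resint-open:6} the openness of the $E_i$-locus is only asserted under the additional assumption that $X$ locally embeds into a regular scheme (this comes from Lemma \ref{lemm:semicont}\ref{depth-dim}). Proposition \ref{pr:global_J/KJ_loc_free} assumes only that $X$ is Gorenstein, so this appeal is not available in the stated generality. Your argument does go through verbatim for, say, $X$ regular or of finite type over a field, but not as written.

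The paper sidesteps this by propagating the \emph{conclusion} rather than the \emph{hypothesis}. Given $w \in \schemeW$, choose a closed specialization $w'$ of $w$ and apply Proposition \ref{prop:conormal-free} (and Remark \ref{rmk:generalized-res-int} for the degenerate case $t=-1$) at $w'$, where the weak hypothesis is assumed: this gives $(J/KJ)_{w'}$ free over $\scr O_{\schemeW,w'}$ of rank $s' = \mu(J_{w'})$. Localizing further to $w$ yields $(J/KJ)_w \wequi \scr O_{\schemeW,w}^{\,s'}$. Now your surjection $\phi_w: \scr O_{\schemeW,w}^{\,s} \twoheadrightarrow \scr O_{\schemeW,w}^{\,s'}$ forces $s \ge s'$, while upper semicontinuity of $\mu$ gives $s = \mu(J_w) \le \mu(J_{w'}) = s'$; hence $s=s'$ and $\phi_w$ is a surjective endomorphism of a finitely generated module, so an isomorphism. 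The spreading-out to a neighbourhood is then exactly as you describe. The point is that freeness of a coherent module is automatically stable under generization, whereas the depth conditions inside the weak hypothesis are not, absent extra assumptions on $X$.
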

\begin{proof}
Let $w \in \schemeW$ and $a_1, \dots, a_s$ generate $J_w$.
Pick a closed specialization $w'$ of $w$ \cite[Tag 02IL]{stacks-project}.
By assumption, Proposition \ref{prop:conormal-free} and Remark \ref{rmk:generalized-res-int}, $(J/KJ)_{w'}$ is free of rank $s'=\mu(J_{w'}) \ge s$.
Since $\scr O_{\schemeW,w}^s \to (J/KJ)_w \wequi ((J/KJ)_{w'})_w \wequi \scr O_{\schemeW,w}^{s'}$ is surjective it must be an isomorphism (e.g. use \cite[Theorem 2.4]{Matsumura-CommutativeAlg}).
Since $X$ is locally noetherian, the stalkwise generators $a_1, \dots, a_s$ of $J$ and the corresponding stalkwise basis for $J/KJ$ propagate to an open neighborhood of $w$ (see e.g. \cite[Tag 056J]{stacks-project} and \cite[Proposition 1.4]{Eisenbud_CommutativeAlgebra}).
This concludes the proof.
\end{proof}

\begin{lemma} \label{lemm:loc-const}
Let $X$ be a Gorenstein scheme and $K=J:I$ a residual intersection.
\begin{enumerate}
\item\label{it:uls_locally_constant} Suppose that the weak hypothesis holds.
  Then $\ul{s}$ is locally constant (on $W=Z(K)$, where it is defined).
\item\label{it:ult_locally_constant} Suppose that the strong hypothesis holds.
  Then $\ul{t}$ is locally constant on $Z(K+I)$.
\end{enumerate}
\end{lemma}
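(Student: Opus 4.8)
The plan is to handle the two parts separately. Part \ref{it:uls_locally_constant} will follow almost immediately from Proposition \ref{pr:global_J/KJ_loc_free}: the weak hypothesis in particular holds at the closed points of $W$, so that result gives that $J/JK$ is a locally free $\scr O_W$-module for which a minimal generating set of $J$ near any point $w \in W$ provides a local basis. Since $\ul{s}(w) = \mu(J_w)$ for all $w$, this identifies $\ul{s}$ with the rank function of the locally free sheaf $J/JK$, which is locally constant.

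For part \ref{it:ult_locally_constant}, write $\ul{g}(x) = \codim I_x$, so $\ul{t} = \max\{\ul{s} - \ul{g}, -1\}$, and note that $\ul{t}$ is upper semicontinuous since $\ul{s}$ is and $-\ul{g}$ is (Lemma \ref{lemm:semicont}); also $\ul{s}$ is already locally constant by part \ref{it:uls_locally_constant}. The essential point is to show that $\ul{t}$ is constant along every specialization $x \rightsquigarrow x'$ with $x, x' \in Z(K+I)$ — which, where $\ul{t} \ge 0$, amounts to showing $\ul{g}$ is locally constant there. To prove this I would localize at $x'$ and argue by cases. If $\ul{t}(x') = -1$, upper semicontinuity of $\ul{t}$ alone forces $-1 \le \ul{t}(x) \le \ul{t}(x') = -1$. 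If $\ul{t}(x') \ge 0$, then $t(J_{x'},I_{x'}) \ge 0$, so the strong hypothesis at $x'$ supplies condition $D_1$, i.e. $\scr O_{Z(I),x'}$ is Cohen--Macaulay; since $X$ is Gorenstein, both $\scr O_{X,x'}$ and $\scr O_{Z(I),x'}$ are then equidimensional Cohen--Macaulay local rings, and the dimension formula in a Cohen--Macaulay local ring forces every minimal prime of $I_{x'}$ to have height exactly $\ul{g}(x')$. As $x \in Z(I)$, the codimension $\ul{g}(x)$ equals the minimum of the heights of the primes lying between $I$ and the prime of $x$, attained at a minimal prime of $I$; hence $\ul{g}(x) = \ul{g}(x')$, and combining with $\ul{s}(x) = \ul{s}(x')$ (part \ref{it:uls_locally_constant}, as $x$ and $x'$ lie in one connected component of $W$) gives $\ul{t}(x) = \ul{t}(x')$.

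Finally, I would deduce local constancy of $\ul{t}$ on $Z(K+I)$ from this: given $x' \in Z(K+I)$ with $n = \ul{t}(x')$, upper semicontinuity yields an affine open neighbourhood $U$ of $x'$ in $Z(K+I)$ on which $\ul{t} \le n$, and the closed subset $C = \{x \in U : \ul{t}(x) = n\}$ must be a neighbourhood of $x'$ — otherwise $x' \in \overline{U \setminus C}$, and since $U$ is noetherian $x'$ would be a specialization of the generic point $\eta$ of some irreducible component of $\overline{U \setminus C}$, with $\eta \in U \setminus C$ so $\ul{t}(\eta) < n$, contradicting the claim applied to $\eta \rightsquigarrow x'$. (One could instead invoke that a locally constructible, generization-stable subset of a locally noetherian scheme is open.) I expect the main obstacle to be the Cohen--Macaulay/equidimensionality step, and in particular making sure $\ul{g}$ really is locally constant precisely on $Z(K+I)$ — it need not be on all of $Z(I)$, where components of different codimension may meet — and that the degenerate case $t=-1$ is cleanly absorbed.
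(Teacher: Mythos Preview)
Your proposal is correct and follows essentially the same approach as the paper: part \ref{it:uls_locally_constant} via the rank of the locally free sheaf $J/JK$ from Proposition \ref{pr:global_J/KJ_loc_free}, and part \ref{it:ult_locally_constant} by reducing to constancy along generalizations, handling $\ul t(x')=-1$ by upper semicontinuity and $\ul t(x')\ge 0$ by using $D_1$ to make $R/I$ Cohen--Macaulay and then invoking equidimensionality of $Z(I)$ in a Cohen--Macaulay ambient ring. The only difference is packaging: the paper isolates the equidimensionality step as a separate lemma (Lemma \ref{lemm:equicod}), whereas you inline it, and you spell out the topological deduction (upper semicontinuous plus generization-stable implies locally constant) that the paper leaves implicit.
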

\begin{proof}
\ref{it:uls_locally_constant} We have $\ul{s}(x) = \Rrank(J_x/K_xJ_x)$. Thus the result follows because $J/JK$ is locally free by Proposition \ref{pr:global_J/KJ_loc_free}.

\ref{it:ult_locally_constant} It suffices to prove that $\ul t$ is constant along generalizations.\NB{E.g.: A sober topological space with finitely many irreducible components is connected if and only if any two points can be connected by a chain of generalizations/specializations. Proof is basically trivial.}
We may thus assume that $X$ is local with closed point $x$.
If $\ul{t}(x) \ge 0$ then $D_1$ holds at $x$, and so $R/I$ is Cohen--Macaulay.
It follows from Lemma \ref{lemm:equicod} below that $\ul{g}(J,I)$ is constant on $Z(I)$, which implies the desired result in this case.
Now assume that $\ul{t}(x) = -1$.
Then since $\ul{t} \ge -1$ and $x$ is the closed point, $\ul{t}$ is constantly equal to $-1$ on $Z(K)$ by upper semicontinuity.
\end{proof}

\begin{lemma} \label{lemm:equicod}
Let $R$ be a local Cohen--Macaulay ring and $I$ an ideal such that $R/I$ is Cohen--Macaulay.
Then every component of $Z(I)$ has the same codimension in $X$.
\end{lemma}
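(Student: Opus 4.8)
\textit{Plan.} I would deduce the statement from two standard facts about Cohen--Macaulay local rings $A$: (i) the height formula $\operatorname{ht}_A \mathfrak q + \dim A/\mathfrak q = \dim A$ holds for every prime $\mathfrak q$ (a CM local ring is catenary and equidimensional; see e.g.\ \cite[Theorem 2.1.2, Corollary 2.1.4]{bruns1998cohen} or \cite[\S17]{Matsumura-CommutativeAlg}); and (ii), as an immediate special case of (i) applied to a minimal prime, $A$ is \emph{equidimensional}, i.e.\ $\dim A/\mathfrak q = \dim A$ for every minimal prime $\mathfrak q$ of $A$.

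First I would fix an irreducible component $C$ of $Z(I) \subset X = \Spec R$. It is of the form $C = V(\mathfrak p)$ for a prime $\mathfrak p$ that is minimal over $I$, and its codimension in $X$ is $\operatorname{ht}_R \mathfrak p$. Applying the height formula (i) to the Cohen--Macaulay local ring $R$ gives $\operatorname{ht}_R \mathfrak p = \dim R - \dim R/\mathfrak p$.

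Next I would observe that $\mathfrak p/I$ is a minimal prime of $R/I$. Since $R/I$ is Cohen--Macaulay local, equidimensionality (ii) applied to $R/I$ yields $\dim R/\mathfrak p = \dim (R/I)/(\mathfrak p/I) = \dim R/I$. Substituting into the previous identity, $\operatorname{ht}_R \mathfrak p = \dim R - \dim R/I$, a quantity that does not depend on the chosen component $C$. Hence all components of $Z(I)$ have the same codimension in $X$, as claimed.

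There is no genuine obstacle here; the only point requiring care is to keep straight that equidimensionality is invoked for the quotient $R/I$ (to pin down $\dim R/\mathfrak p$) while the height formula is invoked for $R$ itself (to convert a height into the codimension of a component in $X$). It is worth recording that the Cohen--Macaulay hypothesis on $R$, and not merely on $R/I$, is genuinely used: over a base that fails to be catenary or equidimensional the conclusion can fail.
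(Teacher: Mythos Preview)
Your proof is correct and follows essentially the same approach as the paper: both use the height formula $\codim P + \dim R/P = \dim R$ for the Cohen--Macaulay ring $R$ (citing Bruns--Herzog), reducing the claim to equidimensionality of the Cohen--Macaulay quotient $R/I$. The only cosmetic difference is that the paper cites Eisenbud for the latter fact while you derive it as a special case of the height formula applied to $R/I$.
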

\begin{proof}
Let $I \subset P$ be a minimal prime, corresponding to a component of $Z(I)$.
Then $\codim P + \dim R/P = \dim R$ since $R$ is Cohen--Macaulay \cite[Corollary 2.1.4]{bruns1998cohen}; hence we need only prove that all components of $Z(I)$ have the same dimension.
This holds since $R/I$ is Cohen--Macaulay \cite[Corollary 18.11]{Eisenbud_CommutativeAlgebra}.
\end{proof}

\subsubsection{Globalized duality theorem}
We will now globalize the duality theorem.
Suppose we are in the situation of Lemma \ref{lemm:loc-const}(2), i.e. the strong hypothesis holds and so $\ul{t}$ is locally constant on $Z(K+I)$.
Consider the sequence of sheaves on $X$ \[ F_{-1} = \scr O_X/K \leftarrow F_0 = I/J \leftarrow \dots \leftarrow F_i = I^{i+1}/JI^i \leftarrow \dots. \]
Let $U \subset X$ be open.
Denote $t(U)$ the maximum of $\ul t$ on $U \cap Z(I+K)$ if this set is non-empty, and $t(U) = -1$ else. 
Set \[ F^p_{\ul t}(U) := F_{t(U)}(U), \] where $F_\infty := 0$.
Note that if $V \subset U$ then $t(V) \le t(U)$, so $F^p_{\ul t}$ is a \emph{pre}sheaf in an evident way.
\begin{lemma}
The sheaf $F_{\ul t}$ associated with the presheaf $F^p_{\ul t}$ is coherent.
In fact if $U \subset X$ is open and $U \cap Z(I+K)$ has at most one connected component then $F_{\ul t}|_U \wequi F_{t(U)}|_U$.
\end{lemma}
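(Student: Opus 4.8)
The plan is to prove the second (more precise) assertion by a stalkwise comparison and then deduce coherence by localizing on $X$. Write $F_i = I^{i+1}/JI^i$ for $i \ge 0$ and $F_{-1} = \scr O_X/K$; since $J \subset K$, the inclusions $I^{i+1} \hookrightarrow I^i$ induce transition maps $F_i \to F_{i-1}$ for all $i \ge 0$ (for $i=0$ this is $I/J \to \scr O_X/J \twoheadrightarrow \scr O_X/K$), and these compose to canonical maps $F_i \to F_j$ for all $i \ge j \ge -1$. The first thing I would record is the local behaviour of these maps: away from $Z(I)$ the ideal $I$ is locally the unit ideal and $K = J:I = J$ locally, so there each $F_i$ is canonically $\scr O_X/J = \scr O_X/K$ and every transition map is an isomorphism; away from $Z(K)$ one has $I = J$ locally, so $F_i = 0$ for every $i \ge -1$. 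This takes care of all the behaviour of the $F_i$ off $Z(I+K) = Z(I) \cap Z(K)$.

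Next, fix an open $U \subset X$ with $U \cap Z(I+K)$ empty or connected, and put $t = t(U)$. Since $t(V) \le t$ for every $V \subset U$, the transition maps assemble into a morphism of presheaves $\rho\colon F_t|_U \to F^p_{\ul t}|_U$ with components $\rho_V\colon F_t(V) \to F_{t(V)}(V) = F^p_{\ul t}(V)$; compatibility with restriction is automatic because the transition maps are sheaf maps and compose, and when $t = -1$ one has $t(V) = -1$ for all $V \subset U$, so $F^p_{\ul t}|_U = F_{-1}|_U$ on the nose. As $F_t|_U$ is already a sheaf, $\rho$ factors through sheafification, giving $\bar\rho\colon F_t|_U \to F_{\ul t}|_U$, and I would check $\bar\rho$ is an isomorphism on stalks. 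For $x \in U \cap Z(I+K)$, Lemma~\ref{lemm:loc-const}\ref{it:ult_locally_constant} makes $\ul t$ constant (equal to $t$) on the connected set $U \cap Z(I+K)$, hence $t(V) = t$ for every open $V$ with $x \in V \subset U$, so $(F_{\ul t})_x = \colim_{V \ni x} F_t(V) = (F_t)_x$. For $x \in U \setminus Z(I+K)$, every small enough $V$ has $V \cap Z(I+K) = \emptyset$, so $t(V) = -1$ cofinally and $(F_{\ul t})_x = (F_{-1})_x$; but $x \notin Z(I) \cap Z(K)$, so by the first paragraph the transition map $(F_t)_x \to (F_{-1})_x$ is an isomorphism (an isomorphism of zero modules if $x \notin Z(K)$, the identity on $\scr O_{X,x}/J_x$ if $x \notin Z(I)$). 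This proves $F_{\ul t}|_U \wequi F_{t(U)}|_U$, and this sheaf is coherent because $X$ is locally noetherian and it equals $I^{t+1}/JI^t$ or $\scr O_X/K$.

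Finally, to conclude that $F_{\ul t}$ is coherent it suffices to cover $X$ by opens of the above type, since coherence is local. For $x \notin Z(I+K)$ this is immediate because $Z(I+K)$ is closed. For $x \in Z(I+K)$, I would pick a noetherian affine neighborhood $\Spec A \ni x$, observe that $Z(I+K) \cap \Spec A$ has finitely many irreducible components, and delete those not containing $x$: the resulting open $U \ni x$ has $U \cap Z(I+K)$ equal to a union of nonempty opens of irreducible sets through $x$, each irreducible (hence connected) and containing $x$, so $U \cap Z(I+K)$ is connected. The only genuinely delicate point is the bookkeeping of the stalks of the $F_i$ near $Z(I)$ (i.e. the first paragraph) together with the verification that $\rho$ is a well-defined presheaf morphism; everything else is formal once local constancy of $\ul t$ (Lemma~\ref{lemm:loc-const}) is available.
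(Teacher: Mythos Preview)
Your argument is correct and follows essentially the same route as the paper. The only noteworthy difference is that the paper proves the slightly stronger statement that the \emph{presheaf} $F^p_{\ul t}|_U$ already agrees with $F_{t(U)}|_U$ (by checking directly that $F_{t(U)}(V) \to F_{t(V)}(V)$ is an isomorphism for every $V \subset U$), whereas you pass to sheafification and verify the isomorphism on stalks; both reduce to the same two observations, namely that the transition maps $F_i \to F_j$ are isomorphisms off $Z(I+K)$ and that $\ul t$ is constant on a connected piece of $Z(I+K)$ by Lemma~\ref{lemm:loc-const}. Your final paragraph supplying an explicit basis of opens with connected intersection with $Z(I+K)$ is more detailed than the paper, which simply asserts this parenthetically.
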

\begin{proof}
Coherence being a local property \cite[Tag 01XZ(5)]{stacks-project}, it suffices to establish the second claim (such open sets forming a basis of the topology of $X$).
For this it is enough to show that $F^p_{\ul t}|_U \wequi F_{t(U)}|_U$.
Let $V \subset U$.
We must prove that $F_{t(V)}(V) \wequi F_{t(U)}(V)$ (via the canonical map).
Note that all the maps between the $F_i$ are isomorphisms on $X \setminus Z(K)$.
The same is true on $X \setminus Z(I)$ because there $J=K$ by Lemma \ref{lem:degenerate-case}.
Hence the only way our map can fail to be an isomorphism is if $V$ meets $Z(I+K)$.
But then the same holds for $U$, and hence $t(U) = t(V)$ since $\ul t|_{U \cap Z(I+K)}$ is constant by Lemma \ref{lemm:loc-const}.
\end{proof}
We denote the sheaf constructed above by $I^{\ul t+1}/JI^{\ul t}$.

As a final preparation, note that if $F$ is a sheaf on $Z$ and $\ul e$ is a locally constant function, then $\Sigma^{\ul e} F \in \Dcat(Z)$ makes sense.
\begin{theorem}\label{thm:global_canonical_trace}
Let $X$ be a Gorenstein scheme and $K = J:I$ a residual intersection satisfying the strong hypothesis.
Put $\schemeW=Z(K)$.

There exists a unique map\NB{$\Sigma^{-\ul s}\det(J/KJ)^\dual \wequi \Dual_{\schemeW}(\widetilde\det J/KJ)$} \[ \tmap: \Sigma^{-\ul s} I^{\ul t+1}/JI^{\ul t} \otimes_{\calO_{\schemeW}} \det(J/KJ)^\dual \to \calO_X \] such that for every closed point $w \in \schemeW$, after localization to $\scr O_{X,w}$ the map $\tmap$ is given by the map of Theorem \ref{thm:canonical-trace} and Remark \ref{rmk:generalized-res-int}.
Moreover the following hold.
\begin{enumerate}
\item \label{global_canonical_trace:1} The unique map $\tmap$ satisfies this property at all points $w \in \schemeW$.
\item  \label{global_canonical_trace:2} The map  \[ \tmap^\dagger: \Sigma^{-\ul s} I^{\ul t+1}/JI^{\ul t} \otimes_{\calO_{\schemeW}} \det(J/KJ)^\dual \to \Dual(\calO_{\schemeW}) \] adjoint to the composite \begin{gather*} \calO_{\schemeW} \otimes^L \Sigma^{-\ul s}(I^{\ul t+1}/JI^{\ul t} \otimes_{\calO_{\schemeW}} \det(J/KJ)^\dual) \to \calO_{\schemeW} \otimes \Sigma^{-\ul s}(I^{\ul t+1}/JI^{\ul t} \otimes_{\calO_{\schemeW}} \det(J/KJ)^\dual) \\ \xrightarrow{\text{mult}} \Sigma^{-\ul s} I^{\ul t+1}/JI^{\ul t} \otimes_{\calO_{\schemeW}} \det(J/KJ)^\dual \xrightarrow{\tmap} \calO_X \end{gather*} is an isomorphism.
\item  \label{global_canonical_trace:3} Suppose that $\ul s = s$ and $\ul t = t$ are constant on $\schemeW \cap Z(I)$, $V$ is a vector bundle of rank $s$ on $X$, $\sigma$ a section of $V$ with $\image(\sigma^\dual) = J$.
  Then the canonical map \[ V^\dual|_\schemeW \to J/JK \] is an isomorphism, \[ \Kos'(V, \sigma) \to I^{t+1}/JI^t \] is an equivalence in some open neighborhood of $\schemeW$, and after restriction to this open neighborhood the trace $\tmap$ takes the form \[ \tmap_{V,\sigma}: \Sigma^{-s} I^{t+1}/JI^t \otimes_{\calO_{\schemeW}} \det(J/KJ)^\dual \wequi \Sigma^{-s} \Kos'(V, \sigma) \otimes_{\scr O_X} \det{V} \to \scr O_X. \]
\end{enumerate}
\end{theorem}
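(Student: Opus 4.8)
The plan is to reduce the global statements to the local ones already in hand (Theorem~\ref{thm:canonical-trace}, Lemma~\ref{lemm:I-resn}, Remark~\ref{rmk:generalized-res-int}), using three formal facts: morphisms and equivalences of complexes of sheaves can be checked on stalks; the auxiliary functions $\ul s,\ul t$ are locally constant (Lemma~\ref{lemm:loc-const}, Proposition~\ref{pr:global_J/KJ_loc_free}); and --- crucially --- the mapping complex into which the trace lives is discrete, so there is no obstruction to gluing the local constructions and no ambiguity in matching them.

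First I would prove this structural input. Write $P = \Sigma^{-\ul s} I^{\ul t+1}/JI^{\ul t} \otimes_{\scr O_\schemeW} \det(J/KJ)^\dual$, a coherent sheaf placed in the locally constant degree $-\ul s$ and supported on $\schemeW$. The claim is that $\RiHom_{\scr O_X}(P,\scr O_X)$ is concentrated in homological degree $0$; in fact it is $i_*\det(J/KJ)$. Since cohomology sheaves are computed stalkwise, this reduces to the assertion over each local ring $\scr O_{X,w}$, which is precisely the computation sketched in Remark~\ref{rmk:uniqueness-meaning}(3): $I^{\ul t+1}/JI^{\ul t}$ is a canonical module for the codimension-$\ul s$ Cohen--Macaulay ring $\scr O_{\schemeW,w}$, so $\Sigma^{\ul s}\Dual$ of it lies in degree $0$, the shifts cancel, and Grothendieck--Serre duality converts $\det(J/KJ)^\dual$ into $\det(J/KJ)$ (for $\ul t=-1$ one invokes Remark~\ref{rmk:generalized-res-int} instead). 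Consequently, for every open $U$, the space $\Map_{\Dcat(X)}(P|_U,\scr O_X|_U)$ is discrete with $\pi_0 = \Gamma(U\cap\schemeW,\det(J/KJ))$, this assignment is a sheaf of sets, a map $P\to\scr O_X$ is determined by its restrictions to any open cover, and two such maps agree iff they agree at all stalks over $\schemeW$ (a section of a coherent sheaf vanishing at every stalk is zero).

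Next I would construct $\tmap$ and prove \ref{global_canonical_trace:1}. Cover a neighborhood of $\schemeW$ by opens $V_\alpha$ on which $J$ admits a minimal generating set $a^\alpha_1,\dots,a^\alpha_{s_\alpha}$ (Proposition~\ref{pr:global_J/KJ_loc_free} and finite presentation) and on which $V_\alpha\cap Z(I+K)$ is empty or connected with $\ul t$ constantly $t_\alpha$ there (Lemma~\ref{lemm:loc-const}\ref{it:ult_locally_constant}); then $I^{\ul t+1}/JI^{\ul t}|_{V_\alpha}\wequi I^{t_\alpha+1}/JI^{t_\alpha}$ by the lemma just above the theorem. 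On $V_\alpha$, Lemma~\ref{lemm:I-resn} and Remark~\ref{rmk:generalized-res-int} apply stalkwise (each is an equivalence of complexes, checkable at the stalks $\scr O_{X,w}$, local Gorenstein rings for which $(J_w,I_w)$ satisfies the standard hypothesis), so the recipe of Theorem~\ref{thm:canonical-trace}\ref{tmap_independent_generators} for the generators $a^\alpha_\bullet$ produces a map $\tmap_\alpha\colon P|_{V_\alpha}\to\scr O_{V_\alpha}$ that is independent of the generators. At any $w\in V_\alpha\cap\schemeW$ the stalk $(\tmap_\alpha)_w$ is the canonical local trace at $\scr O_{X,w}$: generator-independence handles that $a^\alpha_\bullet$ need not be a minimal generating set at $w$, and if $w\notin Z(I)$ then $J_w=K_w$ (Lemma~\ref{lem:degenerate-case}) and every $F_i$ has stalk $\scr O_{X,w}/K_w$ at $w$, so the choice of $t_\alpha$ is immaterial. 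Hence the $\tmap_\alpha$ agree at all stalks over overlaps and glue (with the zero map on $X\setminus\schemeW$) to a map $\tmap\colon P\to\scr O_X$ whose stalk at \emph{every} point of $\schemeW$ is the canonical local trace; this gives existence and \ref{global_canonical_trace:1}. Uniqueness of a map realizing the local traces merely at closed points follows because a coherent sheaf on a locally noetherian scheme has no nonzero section supported off the closed points (every nonempty closed subset contains a closed point of the ambient scheme).

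It remains to prove \ref{global_canonical_trace:2} and \ref{global_canonical_trace:3}, both stalkwise over $\schemeW$. For \ref{global_canonical_trace:2}, forming $\tmap^\dagger$ from $\tmap$ (tensoring, multiplication, tensor--hom adjunction) commutes with passage to stalks, so $(\tmap^\dagger)_w=(\tmap_w)^\dagger$ is the isomorphism of Theorem~\ref{thm:canonical-trace}\ref{tmap_dag_iso} (or Remark~\ref{rmk:generalized-res-int}), and being an isomorphism is stalk-local. For \ref{global_canonical_trace:3}, the hypotheses force $\ul s\equiv\Rrank V=s$ and $\ul t\equiv t$; then $\sigma^\dual\colon V^\dual\to J$ induces a surjection $V^\dual|_\schemeW\to J/JK$ of rank-$s$ locally free $\scr O_\schemeW$-modules, hence an isomorphism, and $\det V^\dual|_\schemeW\wequi\det(J/KJ)$. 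Trivializing $V$ locally identifies $\Kos'(V,\sigma)$ with the complexes $\Kos'(a_1,\dots,a_s)$ of Definition~\ref{def:K'-aff} (Remark~\ref{rmk:K'-functorial}), so $\Kos'(V,\sigma)\to I^{t+1}/JI^t$ is an equivalence near $\schemeW$ by the stalkwise form of Lemma~\ref{lemm:I-resn}, and $\alpha_{V,\sigma}$ twisted down by $\det V^\dual|_\schemeW\wequi\det(J/KJ)$ is locally the composite defining $\tmap$; a stalkwise comparison (again via generator-independence) identifies $\tmap$ with $\tmap_{V,\sigma}$ there. I expect the only genuinely non-formal step to be the structural input of the second paragraph: once $\RiHom(P,\scr O_X)$ is known to be discrete, everything else is patching of stalk data, since in this situation --- as Remark~\ref{rmk:uniqueness-meaning} explains --- ``independent of choices'' simply means ``the two maps are equal''. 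The part that needs careful bookkeeping is the interaction of the locally constant $\ul s,\ul t$ with the sheaf $I^{\ul t+1}/JI^{\ul t}$ and with the degenerate locus $\schemeW\setminus Z(I)$, handled via Lemma~\ref{lemm:loc-const}, Lemma~\ref{lem:degenerate-case} and Remark~\ref{rmk:generalized-res-int}.
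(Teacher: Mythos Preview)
Your proposal is correct and follows essentially the same strategy as the paper: establish that $\RiHom(P,\scr O_X)$ has vanishing higher homotopy sheaves so that $\pi_0$ of the mapping space is a sheaf, construct $\tmap$ locally from a choice of minimal generators via Theorem~\ref{thm:canonical-trace} and Remark~\ref{rmk:generalized-res-int}, glue using discreteness, and check \ref{global_canonical_trace:2} stalkwise. The only differences are organizational: the paper proves \ref{global_canonical_trace:3} first and then invokes it (with $V=\scr O^s$) to obtain local existence, whereas you build the local traces directly and verify \ref{global_canonical_trace:3} afterwards; and you prove the slightly stronger fact that $\RiHom(P,\scr O_X)$ is concentrated in degree $0$ (indeed $\wequi i_*\det(J/KJ)$), while the paper only records the vanishing for $i>0$ that is needed for the sheaf property.
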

\begin{remark}
Assuming part  \ref{global_canonical_trace:2} of the theorem, we find that \[ \Map(\Sigma^{-\ul s}I^{\ul t+1}/JI^{\ul t} \otimes_{\calO_{\schemeW}} \det(J/KJ)^\dual, \calO_X) \wequi \Map(\Dual(\calO_{\schemeW}), \calO_X) \wequi \Map(\scr O_X, \scr O_{\schemeW}) \] is discrete.
Thus uniqueness of $t$ has only one possible meaning.
\end{remark}
\begin{proof}
Let $F = \RiHom(\Sigma^{-\ul s}I^{\ul t+1}/JI^{\ul t} \otimes_{\calO_{\schemeW}} \det(J/KJ)^\dual, \calO_X) \in \Dcat(X)$.
We eventually seek to prove that $F$ is the dual of the dual of $\scr O_\schemeW$, i.e. $F \wequi \scr O_\schemeW$.
We shall first show that $\ul{\pi}_i F = 0$ for $i > 0$.
This can be checked on stalks, and formation of $F$ is compatible with passage to stalks \cite[Proposition III.6.8]{hartshorne2013algebraic}. 
If $x \in X \setminus \schemeW$ then $F_x = 0$.
On the other hand if $x \in \schemeW$ then the claim follows from Theorem \ref{thm:canonical-trace} and Remark \ref{rmk:generalized-res-int}.
It follows that $\pi_0 F$ (i.e. the presheaf sending $U \subset X$ to the set of homotopy classes of maps from $\Sigma^{-\ul s}I^{\ul t+1}/JI^{\ul t} \otimes_{\calO_{\schemeW}} \det(J/KJ)^\dual|_U$ to $\scr O_U$) is a sheaf (indeed $\pi_0 F \wequi \Omega^\infty F$ is a $0$-truncated sheaf of spaces).
Because of the existence of closed specializations \cite[Tag 02IL]{stacks-project}, the map \[ \pi_0 F(X) \to \prod_{x \in X_{(0)}} (\pi_0 F)_x \wequi \prod_{x \in \schemeW_{(0)}} (\pi_0 F)_x \] is an injection (here the product is over all closed points).
Noting that the desired map $\tmap$ corresponds to a global section of $\pi_0 F$, this proves uniqueness; i.e. there is at most one map $\tmap$ as in \ref{global_canonical_trace:1}.

Next we prove \ref{global_canonical_trace:3}; in particular we shall show that under the additional assumptions $\tmap$ exists and is given by $\tmap_{V,\sigma}$.
The first isomorphism is immediate from Proposition \ref{prop:conormal-free} and Remark \ref{rmk:generalized-res-int}.
Note that we have $I^{\ul t+1}/JI^{\ul t} \wequi I^{t+1}/JI^t$ on all of $\schemeW$ (not just $\schemeW \cap Z(I)$), since this holds outside $Z(I)$ for trivial reasons.
The map $\Kos'(V, \sigma) \to I^{t+1}/JI^t$ is an equivalence at all closed points of $\schemeW$ by Lemma \ref{lemm:I-resn} and Remark \ref{rmk:generalized-res-int}, thus in some open neighbourhood of $\schemeW$ as claimed.\NB{Using locally noetherian assumption...}
The map $\tmap_{V,\sigma}$ has the required specialization at all points of $\schemeW$, by construction.
Thus by uniqueness, it is the global trace.

From this we deduce existence of $\tmap$ in general.
Since $\pi_0 F$ is a sheaf, we may work locally: it suffices to prove that given $x \in X$ there exists an open neighbourhood $U_x$ of $x$ and a map $\tmap_x$ over $U_x$ such that for every point $x' \in U_x$ the map induced by $\tmap_x$ on the stalk at $x'$ is the one coming from Theorem \ref{thm:canonical-trace} or Remark \ref{rmk:generalized-res-int}.
Thus let $x \in X$.
If $x \not\in \schemeW$ we put $U_x = X \setminus \schemeW$ and $\tmap_x = 0$.
Now suppose that $x \in \schemeW$.
Pick minimal generators $a_1, \dots, a_s$ for $J_x$.
They extend to generators of $J$ over some sufficiently small open neighbourhood $U$ of $x$.
Shrinking $U$, we may assume that $U \cap Z(I+K)$ has at most one connected component, and thus $\ul s$ and $\ul t$ are constant thereon (Lemma \ref{lemm:loc-const}).
We are thus in the situation of \ref{global_canonical_trace:3} (take $V = \scr O^s$, $\sigma = (a_1, \dots, a_s)$), whence the local trace $\tmap_x$ exists.
It follows that $\tmap$ exists on all of $X$, as needed.

We have in fact arranged that our map $\tmap$ has the expected stalk at all points $x \in X$, whence \ref{global_canonical_trace:1}.
It remains to prove \ref{global_canonical_trace:2}, i.e. that $\tmap^\dagger$ is an equivalence.
This can be checked stalkwise, where it holds by construction.
\end{proof}

\begin{remark} \label{rmk:identify-i!}
Write $i: \schemeW \to X$ for the canonical inclusion.
The exceptional functor $$i^!: \Dcat(X) \to \Dcat(\schemeW)$$ satisfies $i_*i^!(\ph) = \RiHom(\scr O_\schemeW, \ph)$ \cite[Tag 0AA2]{stacks-project}.
Since $i_*$ is $t$-exact and conservative, we thus learn from (2) that $\Sigma^{\ul s}i^!(\scr O_X)$ is concentrated in degree $0$, namely given (canonically) by \[ \Sigma^{\ul s}i^!(\scr O_X) \wequi I^{\ul t+1}/JI^{\ul t} \otimes_{\calO_{\schemeW}} \det(J/KJ)^\dual. \]
In the situation of (3) we have $\widetilde\det V^\dual|_\schemeW \wequi \Sigma^s \det(J/KJ) $, and so the above can more compactly be written as \[ i^!(\widetilde \det V^\dual) \wequi I^{\ul t+1}/JI^{\ul t}. \]
\end{remark}

\section{Almost complete intersections} \label{sec:almost-complete-int}
We apply the results of the previous section in the case of \emph{almost complete intersections} and use them to globalize a result of van Straten--Warmt \cite{van2011gorenstein}.
\begin{warning}
In this section, we denote by $\Dual$ various (shifted and twisted) duality functors.
This is in contrast with the previous sections, where $\Dual$ always denoted $\RiHom(\ph, R)$.
\end{warning}
\subsection{Almost complete intersections after van Straten--Warmt}
The following is mainly a summary of \cite[\S\S3,4]{van2011gorenstein}.
Since we generalize some statements slightly, we choose to supply a full proof.
\begin{proposition} \label{prop:vSW}
Let $R$ be a regular local ring of dimension $n$, $f = (f_1,\dots, f_n) \in R^n$ a sequence of elements such that the ideal $J$ generated by $f$ has dimension $\le 1$.
Write $K=\Kos(f)$ for the Koszul complex and $\Dual=\RiHom(\ph,\Sigma^n R)$ for the canonical shifted duality, so that $\Dual K \wequi K$.
\begin{enumerate}
\item \label{it:vSWpr1}We have $\pi_i K = 0$ for $i \not\in \{0,1\}$.\NB{This holds more generally if $R$ is arbitrary and $J$ has depth $n-1$, e.g. $R$ CM and $J$ dimension $1$.}
  Put $A=\pi_0 K$ ($=R/J$) and $B=\Sigma \pi_1 K$, so that we have a cofiber sequence $B \to K \to A$ (in $\Dcat(R)$).
\item \label{it:vSWpr2} We have $\pi_0 \Dual B \wequi R/I$ for some ideal $I \supset J$, $\pi_i \Dual B = 0$ else, $\pi_0 \Dual A \wequi I/J$, $\pi_1 A \wequi \pi_1 K$, $\pi_i A = 0$ else.
  All these equivalences are induced by the natural maps $\Dual A \to \Dual K \wequi K$ and $K \wequi \Dual K \to \Dual B$.
\item  \label{it:vSWpr3}We have $I/J=H^0_m(R/J)$, i.e. $I$ is the saturation of $J$.
\end{enumerate}
\end{proposition}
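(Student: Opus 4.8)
The strategy is to identify both $I/J$ and the local cohomology module $H^0_m(R/J)$ as the maximal submodule of $R/J$ of dimension $0$ (equivalently, of finite length), and to do this one has to relate the module $B = \Sigma\pi_1 K$ and its dual to the finite-length part of $R/J$. Recall from part \ref{it:vSWpr2} that there is an ideal $I \supset J$ with $\pi_0\Dual A \wequi I/J$, where $A = R/J$. So the content of \ref{it:vSWpr3} is the equality $I/J = H^0_m(R/J)$ inside $R/J$, together with the observation that this forces $I$ to be the saturation $J^{\mathrm{sat}} = \{x : \mathfrak m^N x \subset J \text{ for } N \gg 0\}$ of $J$, since $H^0_m(R/J)$ is by definition $J^{\mathrm{sat}}/J$.

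First I would record that $J$ has dimension $\le 1$, hence $R/J$ is a module of dimension $\le 1$ over the regular local ring $R$ of dimension $n$; its maximal finite-length submodule is exactly $H^0_m(R/J)$, which has codimension $\ge n$ (or is zero). Now from the cofiber sequence $B \to K \to A$ of part \ref{it:vSWpr1} and the self-duality $\Dual K \wequi K$, dualizing gives a cofiber sequence $\Dual A \to K \to \Dual B$. Part \ref{it:vSWpr2} identifies $\pi_0 \Dual B \wequi R/I$ and says $\Dual B$ is concentrated in degree $0$, so $\Dual B$ is (up to the shift conventions in the statement) the module $R/I$; equivalently $B$ is, up to shift, a dualizing-type complex for $R/I$ concentrated in a single degree, namely $B \wequi \Sigma^{?}\RiHom(R/I, \Sigma^n R)$. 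The key local-duality input I would invoke is that $\Sigma\pi_1 K = B$ computes the \emph{lowest} local cohomology contribution: concretely, $H^0_m(R/J)$ is dual (via Grothendieck--Serre/Matlis duality, or directly via $\Dual$ since $R$ is regular) to the top cohomology of $\Dual A$, and that top cohomology is precisely the part of $\Dual A$ supported at $m$. So I would show $\pi_0\Dual A = I/J$ has a canonical submodule (or quotient) identified with the $m$-torsion of $R/J$.

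More precisely, here is the cleanest route: apply $\RiHom(R/\mathfrak m, -)$ or compute $H^i_m$ of the cofiber sequence $\Dual A \to K \to \Dual B$, using that $H^i_m(K)$ can be read off from the two nonzero homotopy groups $A$ and $B$ of $K$. Since $K$ is a perfect complex and $R$ is Gorenstein of dimension $n$, local duality gives $H^i_m(K) \wequi (\pi_{-i}\Dual K)^\vee = (\pi_{-i}K)^\vee$ (Matlis dual), so $K$ has local cohomology only in degrees $i$ with $\pi_{-i}K \ne 0$, i.e. $i = 0, -1$ — but $H^i_m$ vanishes in negative degrees for a module-like object, so in fact $H^0_m(K)$ and $H^{-1}_m(K)$ are the only candidates, and one finds $H^0_m(K) = H^0_m(A) = H^0_m(R/J)$ because $B$ is concentrated in homological degree $1$ and contributes nothing to $H^0_m$ of the complex. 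On the other hand, chasing the same cofiber sequence through $\pi_0$ and using that $\Dual B \wequi R/I$ is $m$-torsion-free away from $\dim \le 1$... — the upshot I want is that the natural map $I/J = \pi_0\Dual A \to A = R/J$ (coming from $\Dual A \to K \to A$, i.e. $\Dual A \to \Dual K = K$) has image exactly $H^0_m(R/J)$. That the image is contained in $H^0_m(R/J)$ follows because $I/J$ is a quotient of $\Dual B = \Dual(\Sigma\pi_1 K)$, and $\pi_1 K$ is supported in codimension $\ge n-1$... actually in codimension $\ge n$ modulo the $1$-dimensional part, so its dual is $m$-torsion; the reverse inclusion, that every element of $R/J$ killed by a power of $m$ arises this way, is the surjectivity assertion and follows by comparing lengths / using that $H^0_m(R/J)$ is itself a perfect module of codimension $n$ whose dual sits inside $\pi_1 K$.

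The main obstacle I anticipate is bookkeeping the shifts and the direction of the duality maps so that "the image of $I/J \to R/J$ is $H^0_m(R/J)$" comes out on the nose rather than up to an automorphism, and in particular checking that the \emph{submodule} $I/J \subset R/J$ produced abstractly in \ref{it:vSWpr2} is literally $H^0_m(R/J)$ and not merely abstractly isomorphic to it. I expect this to be handled by noting that both are characterized intrinsically: $H^0_m(R/J)$ is the unique largest finite-length submodule of $R/J$, and once we know $I/J$ is a finite-length submodule (from the $m$-torsion of its dual-presentation) and that it contains every finite-length submodule (from a length count via local duality, $\mathrm{length}(I/J) = \mathrm{length}(\pi_0\Dual B)^{\text{torsion part}} = \mathrm{length}(H^0_m(R/J))$), the equality is forced. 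Then $I = J^{\mathrm{sat}}$ is immediate from the definition of saturation.
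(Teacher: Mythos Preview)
Your overall plan --- show that $I/J$ is the maximal finite-length submodule of $R/J$ --- is the right idea, but the execution has a genuine slip and the reverse inclusion is not actually argued.

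The slip: you write ``the image is contained in $H^0_m(R/J)$ follows because $I/J$ is a quotient of $\Dual B$''. This is backwards. From the cofiber sequence $\Dual A \to K \to \Dual B$ one gets the short exact sequence $0 \to \pi_0\Dual A \to R/J \to \pi_0\Dual B \to 0$, i.e.\ $I/J$ is the \emph{kernel} of $R/J \to R/I$, not a quotient of $\Dual B$. The clean reason $I/J$ is $\mathfrak m$-torsion is rather that $I/J = \pi_0\Dual A = \Ext^n_R(R/J,R)$, and for any finitely generated $M$ over a Gorenstein local ring of dimension $n$ one has $H^0_{\mathfrak m}(\Ext^n_R(M,R)) = \Ext^n_R(M,R)$ (pass to the completion and use local duality: $\Ext^n_R(M,R)$ is the Matlis dual of $H^0_{\mathfrak m}(M)$, hence annihilated by a power of $\mathfrak m$).

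The gap: for the reverse inclusion $H^0_{\mathfrak m}(R/J) \subset I/J$ you gesture at a length comparison but never say what the two lengths are or why they agree. The paper closes this much more directly: it shows $H^0_{\mathfrak m}(R/I) = 0$. Indeed $R/I \wequi \Dual B$, so $\Ext^n_R(R/I,R) \wequi \pi_0\Dual\Dual B \wequi \pi_0 B = 0$ (since $B$ is concentrated in degree $1$), and then local duality gives $H^0_{\mathfrak m}(R/I) = 0$. Feeding this into the long exact sequence for $H^\bullet_{\mathfrak m}$ applied to $0 \to I/J \to R/J \to R/I \to 0$ yields $H^0_{\mathfrak m}(R/J) \cong H^0_{\mathfrak m}(I/J) = I/J$, which is exactly the desired equality. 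This replaces your unspecified length count with a one-line vanishing computation; I would rewrite your argument along these lines.
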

\begin{proof}
\ref{it:vSWpr1} By \cite[Theorems 17.4 and 17.6]{Eisenbud_CommutativeAlgebra}, we need to prove that $J$ has depth $\ge n-1$.\NB{I.e. contains a regular sequence of length $n-1$.}
Since $R$ is Cohen--Macaulay (so the depth of $J$ coincides with $\codim J$ \cite[Theorem 18.7]{Eisenbud_CommutativeAlgebra}), this follows via \cite[Corollary 18.10]{Eisenbud_CommutativeAlgebra} (implying that $\dim J + \codim J = \dim R$) from our assumption about the dimension of $J$.

\ref{it:vSWpr2} Recall that if $R$ is any ring and $M$ is a finitely generated $R$-module with annihilator of depth $c$, then $\Ext^i(M, R) = 0$ for $i < c$ \cite[Proposition 18.4]{Eisenbud_CommutativeAlgebra}, and if $R$ is regular local of dimension $n$, the same vanishing holds for $i > n$ \cite[Corollary 19.6]{Eisenbud_CommutativeAlgebra}.
Since $\pi_* K$ is supported on $J$, i.e. has annihilator of depth $\ge n-1$, we deduce from this that $\pi_* \Dual A = 0$ for $* \not \in \{0,1\}$ and $\pi_* \Dual B = 0$ for $* \not\in \{0,-1\}$.
The long exact sequence associated with the cofiber sequence $\Dual A \to \Dual K \wequi K \to \Dual B$ shows that $\pi_1 \Dual A \wequi \pi_1 K$ via the canonical map, $\pi_{-1} \Dual B = 0$, and the canonical sequence \[ 0 \to \pi_0 \Dual A \to \pi_0 K \to \pi_0 \Dual B \to 0 \] is exact.
Since $\pi_0 K = R/J$, the existence of $I$ as claimed follows.

\ref{it:vSWpr3} Since $\Dual B \wequi R/I$ we have $\Ext^n(R/I, R) \wequi \pi_0 \Dual \Dual B \wequi \pi_0 B = 0$ and hence $H^0_m(R/I) = 0$ by local duality \cite[Corollary 3.5.9]{bruns1998cohen}.
Considering the long exact local cohomology sequence associated with the short exact sequence above, it will thus suffice to show that $I/J \wequi H^0_m(I/J)$.
This follows from the fact that $I/J \wequi \pi_0 \Dual A = \Ext^n(R/J, R)$.
Indeed for any finitely generated module $M$ over a Gorenstein ring $R$ of dimension $n$ we have $H^0_m(\Ext^n(M,R)) \wequi \Ext^n(M,R)$, as we now prove.
Let $\hat R$ be the completion.
Since both $\Ext(M, \ph)$ \cite[Tags 0A6A(3) and 066E(3)]{stacks-project} and $H^0_m$ \cite[Proposition 3.5.4(d)]{bruns1998cohen} commute with base change along the faithfully flat \cite[Tag 00MC]{stacks-project} map $R \to \hat R$, we may assume that $R$ is complete.
By local duality in the complete case \cite[Theorem 3.5.8]{bruns1998cohen} we get $\Ext^n(M,R) \wequi \Hom_R(H^0_m(M), E)$ (for a certain module $E$), which is annihilated by some power of $m$ (as desired) since $H^0_m(M)$ is (being a submodule of $M$ and so finitely generated).
\end{proof}

\begin{remark}
This result is essentially trivial if $J$ has dimension $0$, i.e. is a complete intersection.
Indeed then $B=0$, $I=R$ and $R/J$ is artinian.
\end{remark}

\subsection{Almost complete intersections: global case}
Recall the notion of associated points \cite[Tag 02OI]{stacks-project}.
\begin{lemma} \label{lem:remove-embedded-points}
Let $X$ be a locally noetherian scheme.
\begin{enumerate}
\item \label{remove-embedded-points:1} There exists a unique quasi-coherent sheaf of ideals $I_0 \subset \scr O_X$ such that for every closed point $x \in X$ we have $I_{0,x} = H^0_x(\scr O_{X,x})$.
\item \label{remove-embedded-points:2} $Z(I_0) \subset X$ is the largest closed subscheme such that none of the associated points of $\scr O_{Z(I_0)}$ is closed.
\end{enumerate}
\end{lemma}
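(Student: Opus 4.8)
The statement has two parts: existence and uniqueness of the ideal sheaf $I_0$ with prescribed stalks at closed points, and the characterization of $Z(I_0)$ as the largest closed subscheme without closed associated points. I would handle \ref{remove-embedded-points:1} first. The key point is that the local sections $H^0_x(\scr O_{X,x})$ glue. On an affine open $U = \Spec R$ with $R$ noetherian, the natural candidate is the ideal $I_0(U) \subset R$ consisting of all elements supported (set-theoretically) on the closed points of $U$ — equivalently, the kernel of $R \to \prod_{\mathfrak p \text{ non-maximal}} R_{\mathfrak p}$, or the intersection of the primary components of $(0)$ belonging to non-maximal primes in a primary decomposition of $(0)$ in $R$. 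One checks this is compatible with localization: for $f \in R$, the primary components of $(0)$ in $R_f$ are exactly those of the components in $R$ that survive, i.e. those belonging to primes not containing $f$; so $I_0(U)_f = I_0(U_f)$, which gives a quasi-coherent sheaf. Then one verifies the stalk at a closed point $x$ with local ring $R_{\mathfrak m}$ is $\bigcap_{\mathfrak q \text{ primary, } \sqrt{\mathfrak q} \ne \mathfrak m} \mathfrak q = H^0_{\mathfrak m}(R_{\mathfrak m})$, using that $H^0_{\mathfrak m}(M)$ is the submodule of elements killed by a power of $\mathfrak m$, which is precisely the intersection of the non-$\mathfrak m$-primary components of $0$ in $M$. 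Uniqueness is then automatic: a quasi-coherent sheaf of ideals on a locally noetherian scheme is determined by its stalks at closed points, since every point has a closed specialization \cite[Tag 02IL]{stacks-project} and a quasi-coherent sheaf of ideals is the intersection of the images of $\scr O_X$ in its stalks at closed points (or argue affine-locally: an ideal $\mathfrak a \subset R$ equals $\bigcap_{\mathfrak m} (\mathfrak a R_{\mathfrak m} \cap R)$).

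For \ref{remove-embedded-points:2}, I would first show that no associated point of $\scr O_{Z(I_0)}$ is closed, and then that $Z(I_0)$ is maximal with this property. For the first: working affine-locally on $U = \Spec R$, $\scr O_{Z(I_0)}(U) = R/I_0(U)$, and the associated primes of $R/I_0(U)$ are exactly the associated primes of $(0)$ in $R$ that are \emph{not} maximal (we have thrown away precisely the maximal embedded/isolated components, so the remaining primary decomposition has no maximal radicals). Hence none is a closed point of $U$; a point is a closed point of $X$ iff it is a closed point of some (equivalently any) affine open containing it. For maximality: suppose $Z \subset X$ is closed, cut out by $\scr O_X/\scr J$, with no closed associated point. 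I must show $\scr J \subset I_0$, i.e. $\scr J_x \subset H^0_x(\scr O_{X,x})$ for every closed $x$. Affine-locally, $\scr J(U) = \mathfrak b$ with $R/\mathfrak b$ having no associated prime equal to $\mathfrak m$ for $\mathfrak m$ maximal. Localizing at a maximal $\mathfrak m$, $R_{\mathfrak m}/\mathfrak b R_{\mathfrak m}$ has no $\mathfrak m R_{\mathfrak m}$-associated prime, i.e. $H^0_{\mathfrak m}(R_{\mathfrak m}/\mathfrak b R_{\mathfrak m}) = 0$; the long exact local cohomology sequence for $0 \to \mathfrak b R_{\mathfrak m} \to R_{\mathfrak m} \to R_{\mathfrak m}/\mathfrak b R_{\mathfrak m} \to 0$ then gives $H^0_{\mathfrak m}(\mathfrak b R_{\mathfrak m}) \supseteq \mathfrak b R_{\mathfrak m} \cap H^0_{\mathfrak m}(R_{\mathfrak m})$ — actually more directly, an element of $\mathfrak b R_{\mathfrak m}$ that maps to $0$ in $R_{\mathfrak m}/\mathfrak b R_{\mathfrak m}$ trivially lies in $\mathfrak b R_{\mathfrak m}$, so I want $\mathfrak b R_{\mathfrak m} \subseteq H^0_{\mathfrak m}(R_{\mathfrak m})$, which follows because any $b \in \mathfrak b R_{\mathfrak m}$ has image $0$ in $R_{\mathfrak m}/\mathfrak b R_{\mathfrak m}$ and hence... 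I need instead: $\mathfrak b R_{\mathfrak m}$, viewed as a submodule of $R_{\mathfrak m}$, has every associated prime among those of $R_{\mathfrak m}$, and I want these all non-maximal; this is where I invoke that $\operatorname{Ass}(\mathfrak b R_{\mathfrak m}) \subseteq \operatorname{Ass}(R_{\mathfrak m})$ and separately handle that $H^0_{\mathfrak m}(\mathfrak b R_{\mathfrak m}) = \mathfrak b R_{\mathfrak m} \cap H^0_{\mathfrak m}(R_{\mathfrak m})$, so the inclusion $\mathfrak b \subseteq I_0$ reduces to showing $\mathfrak b R_{\mathfrak m} \subseteq H^0_{\mathfrak m}(R_{\mathfrak m})$ — equivalently every element of $\mathfrak b R_{\mathfrak m}$ is $\mathfrak m$-power torsion, which is exactly the condition that $\mathfrak b R_{\mathfrak m}$ has no non-maximal primes in its support meeting... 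I will need to phrase this carefully using associated primes of the ideal $\mathfrak b$ itself inside $R_{\mathfrak m}$.

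The main obstacle, and the part I would be most careful about, is exactly this last maximality argument: translating the hypothesis ``$\scr O_Z$ has no closed associated point'' (a statement about $R/\mathfrak b$) into the inclusion $\mathfrak b R_{\mathfrak m} \subseteq H^0_{\mathfrak m}(R_{\mathfrak m})$ (a statement about the submodule $\mathfrak b R_{\mathfrak m}$ of $R_{\mathfrak m}$). The cleanest route is: $b \in H^0_{\mathfrak m}(R_{\mathfrak m})$ iff $b$ is annihilated by a power of $\mathfrak m$ iff $\operatorname{Supp}(R_{\mathfrak m} b) = \{\mathfrak m\}$ (or is empty); meanwhile $\mathfrak b R_{\mathfrak m}$ having only non-maximal... no — I actually want the reverse, that all of $\mathfrak b R_{\mathfrak m}$ is $\mathfrak m$-torsion is FALSE in general (e.g. $\mathfrak b$ could be a non-maximal prime). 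So the correct formulation must be that the embedded/isolated \emph{closed} components of $(0)$ in $R$ are contained in $\mathfrak b$ whenever $R/\mathfrak b$ has no closed associated prime. I would prove this via primary decomposition: write $(0) = \bigcap \mathfrak q_i$ with the $\mathfrak q_i$ with maximal radical grouped as $\mathfrak q'$ (so $I_0 = \mathfrak q'$ after intersecting only the maximal-radical ones; more precisely $I_0(U)$ is the intersection of the $\mathfrak q_i$ whose radical is \emph{not} maximal). If $R/\mathfrak b$ has no closed associated prime then $\mathfrak b$ contains the $\mathfrak m$-primary components for every maximal $\mathfrak m$, by the standard fact that $\mathfrak b \supseteq H^0_{\text{(finite union of max ideals)}}(0)$; dualizing/using that $H^0_{\mathfrak m}(R/\mathfrak b) = 0 \Rightarrow \mathfrak b \supseteq H^0_{\mathfrak m}(0)$ via the inclusion $H^0_{\mathfrak m}(0) \cap \mathfrak b$... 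I will spell this out using that $H^0_{\mathfrak m}(\ph)$ is left exact and $H^0_{\mathfrak m}(R) \to H^0_{\mathfrak m}(R/\mathfrak b) = 0$ forces $H^0_{\mathfrak m}(R) \subseteq \mathfrak b$ when $\mathfrak b$ is $\mathfrak m$-saturated-complementarily... On reflection the slick statement is: $H^0_{\mathfrak m}(R) \subseteq \mathfrak b$ if and only if $\mathfrak m \notin \operatorname{Ass}(R/\mathfrak b)$ — one direction is what we need, and it follows because $H^0_{\mathfrak m}(R) \cap \mathfrak b$ maps injectively into $H^0_{\mathfrak m}(R/\mathfrak b)=0$ only if I also know $H^0_{\mathfrak m}(R) \subseteq H^0_{\mathfrak m}(R)$... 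The genuinely clean argument: $\mathfrak m \notin \operatorname{Ass}(R/\mathfrak b)$ means $\Gamma_{\mathfrak m}(R/\mathfrak b)=0$, and the exact sequence $0 \to \Gamma_{\mathfrak m}(R)/(\Gamma_{\mathfrak m}(R)\cap\mathfrak b) \to R/\mathfrak b$ together with $\Gamma_{\mathfrak m}$ of it being zero gives $\Gamma_{\mathfrak m}(R) \subseteq \mathfrak b$ after noting $\Gamma_{\mathfrak m}(R)$ is itself $\mathfrak m$-power-torsion hence equals its own $\Gamma_{\mathfrak m}$ and injects into $R/\mathfrak b$, landing in $\Gamma_{\mathfrak m}(R/\mathfrak b) = 0$, so $\Gamma_{\mathfrak m}(R) \subseteq \mathfrak b$. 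That is the crux, and the rest is bookkeeping.
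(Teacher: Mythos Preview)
Your argument for part \ref{remove-embedded-points:1} has a genuine gap in the existence half. The affine-local candidate you propose---on $U = \Spec R$, take the intersection of the primary components of $(0)$ with non-maximal radical---does \emph{not} glue, because maximality of a prime is not stable under localization. Concretely, let $R = k[x,y]_{(x,y)}/(x^2, xy)$. Here $(0) = (x) \cap (y)$ is a primary decomposition with $\sqrt{(x)} = (x)$ non-maximal and $\sqrt{(y)} = (x,y)$ maximal, so your recipe gives $I_0(R) = (x)$. But $R_y \cong k(y)$ is a field, whose unique prime $(0)$ is maximal, so your recipe gives $I_0(R_y) = R_y$; meanwhile $(I_0(R))_y = (x)_y = 0$. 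Hence the claimed compatibility $I_0(U)_f = I_0(U_f)$ fails. (Your approach would go through on schemes where every point closed in an affine open is already closed in $X$---e.g.\ Jacobson schemes---but the lemma is stated for arbitrary locally noetherian $X$.) The paper sidesteps this by constructing $I_0$ globally as $\colim_V \ul\Gamma_V(\scr O_X)$, the colimit over finite sets $V$ of closed points \emph{of $X$}, and then checking the stalk condition using that $H^0_V$ commutes with localization. Your uniqueness argument via closed specializations is correct and matches the paper.

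For part \ref{remove-embedded-points:2}, first a slip of orientation: you write ``I must show $\scr J \subset I_0$,'' but since $Z(I_0)$ is to be the \emph{largest} such subscheme, the required inclusion is $I_0 \subseteq \scr J$. After the false starts, your final ``genuinely clean argument'' does establish this correct direction---if $\mathfrak m \notin \operatorname{Ass}(R/\mathfrak b)$ then $\Gamma_{\mathfrak m}(R/\mathfrak b) = 0$, and since the image of $\Gamma_{\mathfrak m}(R)$ in $R/\mathfrak b$ lands in $\Gamma_{\mathfrak m}(R/\mathfrak b)$ one gets $\Gamma_{\mathfrak m}(R) \subseteq \mathfrak b$---and this is essentially the paper's argument, which reduces immediately to the local case and phrases the same step contrapositively (an $x \in H^0_m(R) \setminus \mathfrak b$ yields a nonzero $m$-torsion submodule $x \cdot R/\mathfrak b$, forcing $m \in \operatorname{Ass}(R/\mathfrak b)$). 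Your verification that $Z(I_0)$ itself has no closed associated point currently relies on the flawed affine description of $I_0$; once part \ref{remove-embedded-points:1} is fixed, this reduces (as in the paper) to the local fact that $H^0_{\mathfrak m}(R/H^0_{\mathfrak m}(R)) = 0$.
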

\begin{proof}
\ref{remove-embedded-points:1} Uniqueness is clear by existence of closed specializations \cite[Tag 02IL]{stacks-project}.
Let $E$ be the set of finite subsets of closed points of $X$, and set $I = \colim_{V \in E} \ul\Gamma_V(\scr O)$.
Since formation of $H^0_V$ commutes with localization \cite[Tag 07ZP(2)]{stacks-project} and $V \cap \{x\} = \{x\}$ for any closed point $x$ and all $V \in E$ sufficiently large, $I$ satisfies the desired property.

\ref{remove-embedded-points:2} Again by existence of closed specializations, we may assume that $X$ is local.
Thus it suffices to prove that if $R$ is a noetherian local ring, then $I_0 = H^0_m(R) \subset R$ is the smallest ideal such that $m$ is not an associated prime of $R/I_0$.
That $m$ is not associated on $R/I_0$ follows from \cite[Proposition 3.13]{Eisenbud_CommutativeAlgebra}.
Conversely, let $I$ be an ideal and $x \in I_0 \setminus I$.
We must show that $m$ is associated on $R/I$.
By assumption $x\cdot R/I$ is non-zero and if $P$ is one of its associated primes then (since $x \in I_0$) $m^n \subset P$ for some $n$, so $P=m$.
This concludes the proof.
\end{proof}

\begin{remark} \label{rmk:remove-embedded}
We illustrate the effect of the construction $Z(I_0) \hookrightarrow X$.
\begin{enumerate}
\item\label{rmk:remove-embedded-i} If $x \in X$ is closed, then $x \in Z(I_0)$ if and only if $x$ is not isolated.
  Equivalently, if $R$ is a noetherian local ring, then $H^0_m(R) \subset m$ if and only if $\dim R > 0$ \cite[Tag 00KH]{stacks-project}.
\item\label{rmk:remove-embedded-ii} Let $x \in Z(I_0)$ be closed.
  Then $Z(I_0) \to X$ is an isomorphism near $x$ if and only if $x$ is not embedded.
  Equivalently, $H^0_m(R) = 0$ if and only if $m$ is not associated on $R$ \cite[Proposition 3.13(a)]{Eisenbud_CommutativeAlgebra}.
\item Let $\schemeW = X:Z(I_0)$ be the scheme theoretic difference.
  Then $\schemeW$ consists of closed points (i.e. $\dim \schemeW = 0$); equivalently $(0) : H^0_m(R)$ has dimension $0$ (since it contains some power of $m$).
  By \ref{rmk:remove-embedded-i} and \ref{rmk:remove-embedded-ii}, these closed points must be either isolated or embedded (equivalently associated and closed).
  This set is locally finite \cite[Tag 05AF]{stacks-project}.
\end{enumerate}
\end{remark}

\begin{example} \label{ex:I-CM}
If $\dim X = 1$, then $Z(I_0)$ is Cohen--Macaulay.
Indeed a $1$-dimensional scheme is Cohen--Macaulay if and only if it has no embedded points \cite[Tag 0BXG]{stacks-project}.
\end{example}

Now let $X$ be a Gorenstein scheme, equidimensional of dimension $s$, and $Z \subset X$ closed of dimension $1$.
Put $J = I(Z)$ and denote by $Z_0 \subset Z$ the closed subscheme constructed in Lemma \ref{lem:remove-embedded-points}.
Let $I = I(Z_0)$, $K = J:I$ and $\schemeW = Z:Z_0$ ($=Z(K)$).
Then:
\begin{itemize}
\item $\schemeW = Z:Z_0$ is an ($s$-)residual intersection if and only if $J$ is locally at closed points of $\schemeW$ generated by $s$ elements (indeed $\schemeW$ has codimension $s$ by Remark \ref{rmk:remove-embedded}(3)).
\item $I$ satisfies the strong hypothesis (equivalently the standard hypothesis) if and only if it is generically a complete intersection (being Cohen--Macaulay by Example \ref{ex:I-CM}), if and only if $J$ is a complete intersection at its generic points of dimension $1$.
  This holds for example if the components of $J$ of dimension $1$ have multiplicity $1$.
\end{itemize}

\begin{theorem}\label{thm:EU-vSW}
Let $X$ be a regular scheme, equidimensional of dimension $s$, $V$ a vector bundle on $X$ of rank $s$ such that $Z = Z(\sigma)$ has dimension $1$.
Let $I,J,K,\schemeW,Z_0$ be as above and assume that $\schemeW=Z:Z_0$ is an $s$-residual intersection satisfying the strong hypothesis.
Finally assume that $2$ is invertible on $X$.
Write $i: \schemeW \to X$ for the closed immersion.

Then we have \[ [e(V, \sigma)] = i_*(c) \in \W_Z^V(X), \] for a certain class $c \in \W(\schemeW, i^! \widetilde \det V^\dual)$.
In fact $i^! \widetilde \det V^\dual \wequi I^2/JI$ (by Remark \ref{rmk:identify-i!}) and $c$ is the class corresponding to the multiplication map $I/J \otimes I/J \to I^2/JI$.
\end{theorem}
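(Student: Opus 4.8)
The plan is to reduce everything to the local, affine statement of van Straten--Warmt (Proposition \ref{prop:vSW}) together with the globalized duality of Theorem \ref{thm:global_canonical_trace}, and to match up the Koszul-complex description of the Euler class with the symmetric bilinear form on $I/J$. First I would recall the definition of the refined Euler class in Witt theory: $[e(V,\sigma)]$ is the class in $\W_Z^V(X)$ represented by the Koszul complex $K = \Kos(V,\sigma)$ equipped with its canonical self-duality $\Dual K \wequi K$, where now $\Dual = \RiHom(\ph, \widetilde{\det}V^\dual)$ is the shifted, twisted duality appropriate to the $V$-twisted Witt group (this is the twisted analogue of the duality $\RiHom(\ph,\Sigma^s R)$ of Proposition \ref{prop:vSW}, after choosing a local trivialization of $\det V$). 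So the content of the theorem is: this Koszul form, viewed in the Witt group with support in $Z$, equals the pushforward $i_*$ of the multiplication form on $I/J$.

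Next I would set up the comparison at the level of complexes. By Proposition \ref{prop:vSW}\ref{it:vSWpr1}--\ref{it:vSWpr2} (applied locally, with $R$ the local rings of $X$, which are regular of dimension $s$, and $2$ invertible), the self-dual complex $K$ sits in a cofiber sequence $B \to K \to A$ with $A = R/J$, $B = \Sigma\pi_1 K$, and $\Dual$ interchanges the roles: $\pi_0\Dual A \wequi I/J$, $\Dual B \wequi R/I$ concentrated in degree $0$. The key algebraic input is that the connecting data makes $K$ into a \emph{symmetric} object whose ``middle'' piece is exactly the symmetric form $I/J \otimes I/J \to I^2/JI$; this is where I invoke the non-degeneracy assertion of Eisenbud--Ulrich (\cite[Theorem 2.2]{eisenbud2019duality}), together with the identification $i^!\widetilde{\det}V^\dual \wequi I^2/JI$ from Remark \ref{rmk:identify-i!} (valid since $W = Z:Z_0$ is an $s$-residual intersection satisfying the strong hypothesis, with $t=1$). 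Concretely, the cofiber sequence $B \to K \to A$ and its dual $\Dual A \to K \to \Dual B$ exhibit $K$, in the derived Witt-group sense, as an extension whose associated graded contributes the hyperbolic summand $H(\Dual B) = H(R/I)$ — which vanishes in the Witt group — plus the ``non-degenerate part on the cohomology,'' which is precisely the form on $I/J$ valued in $i^!\widetilde{\det}V^\dual = I^2/JI$. This is the standard ``Witt class of a self-dual complex with homology in two adjacent degrees'' computation: the off-diagonal contribution is hyperbolic and the on-diagonal (middle) contribution survives. The identification of that middle form with the multiplication map $I/J \otimes I/J \to I^2/JI$ is exactly Proposition \ref{prop:vSW} combined with Theorem \ref{thm:global_canonical_trace}\ref{global_canonical_trace:3}, which says the canonical trace $\tmap_{V,\sigma}$ realizes $i^!\widetilde{\det}V^\dual$ via $\Kos'(V,\sigma)$ in a neighborhood of $\schemeW$.

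Then I would globalize: all of the above is compatible with localization at closed points (both $\Kos$, $\Dual$, $i^!$, and the constructions of Proposition \ref{prop:vSW} and Theorem \ref{thm:global_canonical_trace} commute with passage to stalks, as already used in the proof of Theorem \ref{thm:global_canonical_trace}), and a Witt class with support in $Z$ is detected locally since we may check the equality after localizing at each point of $Z$ — or, more cleanly, both sides are images of classes in a Grothendieck--Witt group and one compares them via the localization/purity description of $\W_Z^V(X)$. Finally I would note the pushforward $i_*$ along the finite morphism $i: \schemeW \to X$ (here $\schemeW$ has dimension $0$) is the one from Equation \eqref{eq:pushforward}, and that by Remark \ref{rmk:identify-i!} the target twist $p^!K = i^!\scr O_X$ corresponds, after the shift, to $I^2/JI$, so $c \in \W(\schemeW, i^!\widetilde{\det}V^\dual)$ pushes to a class in $\W_Z^V(X)$. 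The \textbf{main obstacle} I anticipate is the bookkeeping of the self-duality: one must check that the self-duality of $K$ induced by the Koszul self-duality restricts/descends correctly under the cofiber sequence $B \to K \to A$ so that the induced pairing on $\pi_0\Dual A \wequi I/J$ is \emph{symmetric} and agrees on the nose (not merely up to the ambiguity of choices) with multiplication $I/J \otimes I/J \to I^2/JI$; this requires carefully tracing the identification $\Dual K \wequi K$ through Proposition \ref{prop:vSW}\ref{it:vSWpr2} and matching it with the canonical trace of Theorem \ref{thm:global_canonical_trace}, and verifying the off-diagonal terms really are hyperbolic (which uses $2$ being invertible). The reference to Example \ref{ex:O3+O2onP2_sectionsyx3} flags that this only works in the Witt group, not in Hermitian $K$-theory, so I would be careful not to claim an equality of Grothendieck--Witt classes.
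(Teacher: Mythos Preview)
Your proposal is essentially correct and follows the same overall strategy as the paper: reduce the structural facts about $K=\Kos(V,\sigma)$ to Proposition~\ref{prop:vSW} locally, use the cofiber sequence $B \to K \to A$ and its dual to exhibit $B$ as isotropic (for $t$-structure reasons), then identify the resulting reduced form on $I/J$ via Theorem~\ref{thm:global_canonical_trace}\ref{global_canonical_trace:3} and Remark~\ref{rmk:identify-i!}.

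The paper packages your ``hyperbolic plus residual'' step more precisely as \emph{algebraic surgery} in the Poincar\'e $\infty$-category $(\Dbc_Z(X), \Qoppa_{\widetilde\det V^\dual})$: the canonical null-homotopy of $B \to K \wequi \Dual K \to \Dual B$ (source and target in orthogonal parts of the $t$-structure) lets one perform surgery along $B \to K$, producing a cobordant Poincar\'e object on $C = \mathrm{cofib}(B \to \Dual A) \wequi I/J$. This is the same content as your sublagrangian reduction, but it makes the ``$[K] = [I/J]$ in $\W$'' step a one-liner rather than a filtration argument. For the step you flag as the main obstacle---pinning down the form on $I/J$---the paper does not argue abstractly but instead observes that $\pi_0\Omega^\infty\Qoppa(I/J) \to \pi_0\Omega^\infty\Qoppa(\Dual A)$ is injective (identifying both with concrete $\Hom$-sets), so the form is determined by its pullback to $\Dual A$; it then checks a single commutative square relating $\Dual A \otimes \Dual A \to K \otimes K \xrightarrow{q_K} \widetilde\det V^\dual$ and $\Dual A \otimes \Dual A \to I/J \otimes I/J \xrightarrow{T} \widetilde\det V^\dual$ by introducing the intermediate complexes $L = \Kos'(V,\sigma)_{t=0} \wequi \Dual A$ and $K' = \Kos'(V,\sigma)_{t=1} \wequi I^2/JI$. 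This explicit diagram with the modified Koszul complexes is what replaces your appeal to ``matching the canonical trace''; it is more concrete than what you sketch and avoids having to separately invoke \cite[Theorem 2.2]{eisenbud2019duality} for non-degeneracy (which comes for free from the surgery output being Poincar\'e).
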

See Section~\ref{subsection:notation_conventions} for information on the Euler class $[e(V, \sigma)] $ in Witt theory. We give more information on Witt theory as we use it now. There is a Witt group $\W(X, \scr L)$ twisted by any shifted line bundle (or even more generally dualizing complex) $\scr L$; one has \[ \W^V(X) := \W(X, \widetilde \det V^\dual) := \W^{rk(V)}(X, \det V^\dual). \]
Given any closed immersion $i: \schemeW \to X$, there is a pushforward \[ i_*: \W(\schemeW, i^! \widetilde \det V^\dual) \to \W_\schemeW(X, \widetilde \det V^\dual) \to \W_Z(X, \widetilde \det V^\dual). \]

\begin{remark}
Note that $\schemeW$ is a disjoint union of ``fat points'' (being locally noetherian of dimension $0$).
We thus obtain a formula for $[e(V,\sigma)]$ as a sum over local terms, one for each point of $Z$ that is either isolated or embedded.
In fact near isolated points $i$ is a regular immersion, $I^2/JI = \scr O_\schemeW$ and $c=1$; the novelty of our result is the terms at embedded points.
\end{remark}

\begin{example}\label{ex:O3+O2onP2_sectionsyx3}
The equality $i_* c = [e(V,\sigma)] $ does not necessarily hold in $\GW_{Z}^V(X).$ For example, let $X = \P^2_k = \Proj k[x,y,z]$, with structure map $\pi: X \to \Spec k$. Let $V$ be the vector bundle $V = \calO(2) \oplus \calO(3)$ and let $\sigma$ be the section given by $\sigma = (xy, x^3)$. Then $J$ is the sheaf of ideals $J = (xy, x^3)$. We let $Z_0 \subset Z$  the closed subscheme constructed in Lemma \ref{lem:remove-embedded-points} and $I = I(Z_0)$ as before. Then $I = (x)$. (To see this, note that away from $[0:0:1]$ the equality holds by Remark~\ref{rmk:remove-embedded}~\eqref{rmk:remove-embedded-ii} as $(x,y)$ is the only embedded prime. At $[0:0:1],$ we compute $H^0_{(x,y)}\big(\frac{k[x,y]_{(x,y)}}{(xy,x^3)} \big)\cong \cup_{m} \{ a \in \frac{k[x,y]_{(x,y)}}{(xy,x^3)}  : a f= 0 \text{ for all }f \in (x,y)^m\} \cong  (x)$.) The isomorphisms \begin{equation}\label{relorex:O3+O2P2xyx3}\widetilde \det V^\dual \cong \calO(5)[2] \cong \omega_{X/k}[2] \otimes \calO(4)^{\otimes 2}\end{equation} define a pushforward $\pi: \GW_{Z}^V(X) \cong  \GW_{Z}^{T_{X/k}}(X) \to \GW(k) $, and the Euler number of $V$, with respect to the relative orientation given by \eqref{relorex:O3+O2P2xyx3}, is $n(V) := \pi_* [\Kos(V,\sigma)]$.  Since $V$ contains an odd dimensional summand, $n(V)$ is a multiple of the hyperbolic form (see for example \cite[Proposition 19]{FourLines}). Since its classical Euler number is $6$, we have $n(V) = 3(\lra{1} + \lra{-1})$. On the other hand, $I/J = kx \oplus kx^2$ is rank $2$, whence so is $\pi_*[I/J]$, preventing a possible equality $i_* c = [i_*(I/J)] = [\Kos(V,\sigma)] $ in $\GW_{Z}^V(X).$ Moreover, because the non-degenerate form on $I/J$ passes through the multiplication $I/J \times I/J \to I^2/IJ = kx^2 \oplus kx^3$, it follows in an elementary manner that $(\pi_Z)_*c$ is of the form \begin{equation*} (\pi_Z)_*c = \begin{pmatrix} 
* & * \\
* & 0 
\end{pmatrix}\end{equation*} where $\pi_Z : Z \to \Spec k$ is the structure map. It then follows that $(\pi_Z)_*c = \lra{1} + \lra{-1}$ is the hyperbolic form in $\GW(k)$ as in \cite[Lemma C.1]{BW-A1Eulerclasses}.
\end{example}

\begin{remark}
The proof of Theorem \ref{thm:EU-vSW} shows that assuming the strong hypothesis is in some sense non-essential (and so is assuming $1/2 \in X$).
Namely, even without the strong hypothesis, we have $[e(V,\sigma)] = [I/J]$, for \emph{some} Poincar\'e structure (i.e., bilinear form) on the module $I/J$.
(Indeed the proof constructs an explicit algebraic surgery on a bilinear form representing the Euler class, and observes that the underlying module of the resulting form is $I/J$.)
The strong hypothesis (and $1/2 \in X$) allows us to identify the Poincar\'e structure in terms of multiplication and trace maps.
\end{remark}

\subsection{Review of Poincar\'e $\infty$-categories}\label{subsection:Poincare_infinity_categories}
A \emph{Poincar\'e $\infty$-category} is a pair $(\scr C, \Qoppa)$ of a stable $\infty$-category $\scr C$ and a functor $\Qoppa: \scr C^\op \to \SH$ satisfying certain assumptions \cite[Definitions 1.2.1 and 1.2.8]{CDHHLMNNS-1}.
One puts \[ B_\Qoppa: \scr C^\op \times \scr C^\op \to \SH, (X, Y) \mapsto \Qoppa(X \oplus Y)/(\Qoppa(X \oplus 0) \oplus \Qoppa(0 \oplus Y); \] the axioms imply that this is \emph{bilinear} and in fact of the form \[ B_\Qoppa(X, Y) = \map(X, D_\Qoppa(Y)) \] for a uniquely determined equivalence \[ D_\Qoppa: \scr C^\op \to \scr C. \]
The space $\Omega^\infty \Qoppa(X)$ is called the \emph{space of forms on $X$}.
Via the map $\Qoppa(X) \to B_{\Qoppa}(X,X)$, any form $q$ induces a map $X \to D_\Qoppa X$; the form is called \emph{non-degenerate} (or $(X,q)$ is a \emph{Poincar\'e object}) if this map is an equivalence.

\begin{example}
Let $X$ be a scheme, $\scr L$ a line bundle and $n \in \Z$.
We put $\scr C = \Dperf(X)$ (the $\infty$-category of perfect complexes \cite[1.2.12]{CDHHLMNNS-1}) and \[ \Qoppa(E) = \map(E \otimes E, \Sigma^n\scr L)^{hC_2}. \]
One may show that this satisfies the axioms and one obtains $D_\Qoppa(E) = \iHom(E, \Sigma^n \scr L)$.
\end{example}

\begin{remark}
The above Poincar\'e structure is not in general the one we want, for the following reasons.
\begin{enumerate}
\item If $2$ is not invertible on $X$, this $\Qoppa$ is not correct.
  Instead one should use the ``genuine symmetric'' Poincar\'e structure which can be obtained from \cite[Proposition 4.2.15, Notation 4.2.20]{CDHHLMNNS-1} by gluing.
\item We do not really want to work with $\Dperf(X)$ but rather the bounded coherent derived category $\Dbc(X)$.
  Indeed this is where Grothendieck duality using dualizing complexes happens.
\end{enumerate}
\end{remark}

\begin{example}\label{ex:Qoppa-Dbc}
Suppose that $X$ is a locally noetherian scheme on which $2$ is invertible.
Then given a dualizing complex $\scr D$ on $X$, the category $\Dbc(X)$ acquires a Poincare structure with $\Qoppa_{\scr D}(E) = \map(E \otimes E, \scr D)^{hC_2}$.
For any closed subscheme $Z \subset X$, this restricts to a Poincar\'e structure on the subcategory $\Dbc_Z(X)$ of complexes supported on $Z$.
If $p: X \to Y$ is a proper morphism and $\scr D$ is a dualizing complex on $Y$, then $p^! \scr D$ is a dualizing complex on $X$ and \[ \pi_*: (D^{b,c}(X), \Qoppa_{p^! \scr D}) \to (D^{b,c}(Y), \Qoppa_{\scr D}) \] upgrades to a Poincar\'e functor.
\end{example}

An important construction in any Poincar\'e $\infty$-category is \emph{algebraic surgery}.
Namely given a Poincar\'e object $(X, q)$ (i.e. $X \in \scr C$ and $q \in \Omega^\infty \Qoppa(X)$) together with $f: T \to X$ and a null homotopy $\eta$ of $f^*q \in \Omega^\infty \Qoppa(T)$ one may construct a (``cobordant'') Poincar\'e object $(X_f, q_f)$.
Essentially by definition, $(X,q)$ and $(X_f, q_f)$ have the same image in the Witt group of $(\scr C, \Qoppa)$.
We can describe $(X_f, q_f)$ as follows.
Let $F$ be the fiber of $X \wequi D_\Qoppa X \to D_\Qoppa T$.
The image of the null homotopy $\eta$ in $B_\Qoppa(T,T) \wequi \map(T, D_\Qoppa T)$ provides a lift of $f: T \to X$ to $T \to F$; then $X_f$ is the cofiber of this map.
In particular we have maps $X_f \leftarrow F \to X$ yielding \[ \Omega^\infty \Qoppa X_f \to \Omega^\infty \Qoppa F \leftarrow \Omega^\infty \Qoppa X. \]
While $q_f$ is a bit tricky to describe in general, it at least has the property that its image in $\Omega^\infty \Qoppa F$ is canonically homotopic to the image of $q$.

\subsection{Proof of Theorem \ref{thm:EU-vSW}}
Recall that $e(V,\sigma) \in \GW_Z(X, \widetilde \det V^\dual)$ corresponds to the Koszul complex $K=\Kos(V, \sigma)$, with its usual shifted duality.
We denote by $$\Dual = \RiHom(\ph, \widetilde \det V^\dual)$$ the duality functor.
We work in the Poincar\'e $\infty$-category $(\Dbc_Z(X), \Qoppa_{\widetilde \det V^\dual})$ of Example \ref{ex:Qoppa-Dbc}.

Set $B = \tau_{\ge 1} K$ and $A = \tau_{\le 0} K \wequi \scr O/J$.
We claim that
\begin{itemize}
\item $\ul\pi_i K = 0$ for $i \not\in \{0,1\}$,
\item $K \wequi \Dual K \to \Dual B$ identifies $\Dual B \wequi \scr O/I$, and
\item $\Dual A \to \Dual K \wequi K$ identifies $\ul \pi_0 \Dual A = I/J$, $\ul \pi_1 \Dual A = \ul \pi_1 K$, $\ul\pi_i \Dual A = 0$ else.
\end{itemize}
By existence of closed specializations and compatibility of $\RiHom$ with passage to stalks \cite[Proposition III.6.8]{hartshorne2013algebraic}, for this we may assume that $X$ is local.
(Note in particular that the second point says that $\ul\pi_i \Dual B = 0$ for $i \ne 0$ and the map $\scr O/J = \ul\pi_0 K \to \ul\pi_0 \Dual B$ is a surjection with kernel $I/J$, which \emph{can} be checked locally. Similarly for the third point.)
The claim thus reduces to Proposition \ref{prop:vSW}.

The map $B \to K \wequi \Dual K \to \Dual B \wequi \scr O/I$ is canonically null, source and target being in orthogonal parts of the $t$-structure.
We may thus perform algebraic surgery on this map.
In other words $B \to K$ lifts canonically into the fiber $\Dual A$ of $K \to \Dual B$, and if $C$ denotes the cofiber of $B \to \Dual A$, then $C$ carries a canonical non-degenerate form cobordant to the one on $K$.
By direct computation we have $C \wequi I/J$, which is the underlying object of $i_*(c)$.
It remains to show that the form produced on $C$ by algebraic surgery is the canonical one.

Since $2$ is invertible, a homotopy class of forms on $E$ is a homotopy class of maps $E \otimes E \to \widetilde \det V^\dual$ or equivalently $E \to \Dual E$ (which are symmetric up to homotopy).
The second description shows that homotopy classes of forms on $I/J$ are certain (homotopy classes of) maps $I/J \to I/J$, whereas homotopy classes of forms on $\Dual A$ are certain homotopy classes of maps $\Dual A \to \Dual \Dual A \wequi A \wequi \scr O/J$, i.e. (for $t$-structure reasons) maps $I/J = \ul\pi_0 \Dual A \to \scr O/J$.
We deduce that the canonical map \[ \Hom(I/J, I/J) \wequi \pi_0 \Omega^\infty \Qoppa(I/J) \to \pi_0 \Omega^\infty \Qoppa(\Dual A) \wequi \Hom(I/J, \scr O/J) \] is an injection.
In other words the form on $C \wequi I/J$ is determined uniquely by the condition that its pullback to $\Dual A$ coincides with the pullback of the form on $K$, up to homotopy.
We shall show that the form on $I/J \wequi i_*(c)$ satisfies the same property.
By the first description of homotopy classes of forms, it will thus be enough to establish commutativity of the following square
\begin{equation*}
\begin{CD}
\Dual A \otimes \Dual A @>a>> K \otimes K \\
@VbVV                 @Vq_KVV     \\
I/J \otimes I/J @>T>> \widetilde \det V^\dual,
\end{CD}
\end{equation*}
where $T$ is the form on $i_*(c)$, $q_K$ is the form on $K$, and $a, b$ are the tensor squares of the canonical maps $\Dual A \to I/J$ (projection to $\pi_0$) and $\Dual A \to K$ (dual to $K \to \tau_{\le 0}K = A$).
Let $L \subset K$ be the subcomplex where in the lowest term we have put $I$ instead of $\scr O$ (so this is the complex $\Kos'(V,\sigma)$ of Definition \ref{def:K'} for $t=0$); then $K/L \wequi \scr O/I$ and so $L \wequi \Dual A$.
Let $K' \subset L \subset K$ be the complex with $I^2$ in degree $0$ and $I \otimes \Lambda^2 V$ in degree $1$; i.e. this is the complex $K'=\Kos'(V, \sigma)$ of Definition \ref{def:K'} for $t=1$.
We have $K' \wequi I^2/JI$ by Theorem \ref{thm:global_canonical_trace}(3).
Consider the following commutative diagram
\begin{equation*}
\begin{CD}
I/J \otimes I/J @<b'<< L \otimes L @>a'>> K \otimes K \\
@VmVV                 @VmVV              @VmVV    \\
I^2/J @<{\wequi}<< K' @>>> K @>>> \widetilde \det V^\dual.
\end{CD}
\end{equation*}
Here the maps denoted $m$ are given by multiplication, $a'$ is the tensor square of the canonical inclusion $L \to K$, and $b'$ is the tensor square of the projection $L \to I/J \wequi \pi_0 L$.
The composite from top left to bottom right via inverting the equivalence is the trace form $T$, by Theorem \ref{thm:global_canonical_trace}(3).
The composite from top right to bottom right is the form $q_K$.
Using that under $L \wequi \Dual A$ the maps $(a,b)$ correspond to $(a',b')$, this establishes the desired commutativity.
This concludes the proof.

\appendix
\section{Macaulay2 code}\label{Section:M2code}
We have implemented functions for the computer program Macaulay2 \cite{M2} to carry out the computations implicit in Proposition \ref{A1Eulernumber_excess_bundle_n_quadrics_containingP2} for concrete examples, in the case $n=5$.
Specifically, we have a function \texttt{Bprime} which can be used to obtain the matrix of a symmetric bilinear form $B'$ such that $B' \wequi \BH \oplus 5\lra{0}$, and we have a function \texttt{diagonalize} which can be used to diagonalize a symmetric bilinear form over a field of characteristic $\ne 2$. The function \texttt{Bprime} uses a supporting function \texttt{CDTr} which is based on a function of S. Pauli \cite{pauli2020computing}. The code is included at the end of this appendix.

\begin{example}
Below is a sample session computing a diagonal representative of $B'$ for a random set of five quadrics on $\P^5$ vanishing on $\P^2$, over the finite field $\FF_{61}$.
\begin{verbatim}
Macaulay2, version 1.16
with packages: ConwayPolynomials, Elimination, IntegralClosure, InverseSystems,
LLLBases, MinimalPrimes, PrimaryDecomposition, ReesAlgebra, TangentCone, Truncations

i1 : load "form.m2"

i2 : load "diagonalization.m2"

i3 : FF = GF 61;

i4 : R = FF[x,y,z];

i5 : M = random(R^5, R^{3:-1})

o5 = | 20x+4y-21z  5x-30y-24z  18x+11y-20z |
     | -16x-22y+5z -2x+20y-25z -7x+25y+23z |
     | 28x-7y+26z  30x+21y+7z  27x-24y-30z |
     | 5x-12y-2z   x-26y+24z   14x+18y     |
     | 21x+2y+z    2x-14y-4z   8x+26y+27z  |

             5       3
o5 : Matrix R  <--- R

i6 : Bil = Bprime(FF,R,M);

              9        9
o6 : Matrix FF  <--- FF

i7 : (base,D)=diagonalize(Bil);

i8 : D

o8 = | -1 0   0   0   0 0 0 0 0 |
     | 0  -11 0   0   0 0 0 0 0 |
     | 0  0   -29 0   0 0 0 0 0 |
     | 0  0   0   -12 0 0 0 0 0 |
     | 0  0   0   0   0 0 0 0 0 |
     | 0  0   0   0   0 0 0 0 0 |
     | 0  0   0   0   0 0 0 0 0 |
     | 0  0   0   0   0 0 0 0 0 |
     | 0  0   0   0   0 0 0 0 0 |

              9        9
o8 : Matrix FF  <--- FF
\end{verbatim}

We thus find that \[ [\BH] = \lra{-1} + \lra{-11} + \lra{-29} + \lra{-12} \in \W(\FF_{61}). \]
\end{example}

\lstinputlisting[lastline=29,breaklines=true,postbreak=\mbox{\textcolor{red}{$\hookrightarrow$}\space},caption=Functions \texttt{Bprime} and \texttt{CDTr} from \texttt{form.m2}]{form.m2}
\lstinputlisting[lastline=35,firstline=4,breaklines=true,postbreak=\mbox{\textcolor{red}{$\hookrightarrow$}\space},caption=Function \texttt{diagonalize} from \texttt{diagonalization.m2}]{diagonalization.m2}

\bibliographystyle{alpha}
\bibliography{bibliographyexcess}

\begin{thebibliography}{10}
\providecommand{\url}[1]{\texttt{#1}}
\providecommand{\urlprefix}{URL }
\expandafter\ifx\csname urlstyle\endcsname\relax
  \providecommand{\doi}[1]{doi:\discretionary{}{}{}#1}\else
  \providecommand{\doi}{doi:\discretionary{}{}{}\begingroup
  \urlstyle{rm}\Url}\fi

\bibitem{ArtinNagata72}
\textit{M.~Artin} and \textit{M.~Nagata}, Residual intersections in
  {C}ohen-{M}acaulay rings, J. Math. Kyoto Univ. \textbf{12} (1972), 307--323,
  \doi{10.1215/kjm/1250523522},
  \urlprefix\url{https://doi.org/10.1215/kjm/1250523522}.

\bibitem{Atiyah-Macdonald}
\textit{M.~F. Atiyah} and \textit{I.~G. Macdonald}, Introduction to commutative
  algebra, Addison-Wesley Publishing Co., Reading, Mass.-London-Don Mills,
  Ont., 1969.

\bibitem{bachmann-very-effective}
\textit{T.~Bachmann}, The generalized slices of {H}ermitian {$K$}-theory, J.
  Topol. \textbf{10} (2017), no.~4, 1124--1144, \doi{10.1112/topo.12032},
  \urlprefix\url{https://doi.org/10.1112/topo.12032}.
  \href{https://arxiv.org/abs/1610.01346}{arXiv:1610.01346}.

\bibitem{BW-A1Eulerclasses}
\textit{T.~Bachmann} and \textit{K.~Wickelgren}, {A}1-{E}uler classes: six
  functors formalisms, dualities, integrality and linear subspaces of complete
  intersections, J. Inst. Math. Jussieu \textbf{22} (2023), no.~2, 681----746,
  \doi{10.1017/S147474802100027X},
  \urlprefix\url{https://doi.org/10.1017/S147474802100027X}.

\bibitem{Brandenburg14}
\textit{M.~Brandenburg}, Tensor categorical foundations of algebraic geometry,
  2014. {\em ArXiv preprint}, available at
  \url{https://arxiv.org/abs/1410.1716}.

\bibitem{bruns1998cohen}
\textit{W.~Bruns} and \textit{J.~Herzog}, Cohen-{M}acaulay rings,
  \textit{Cambridge Studies in Advanced Mathematics}, volume~39, Cambridge
  University Press, Cambridge, 1993.

\bibitem{CDHHLMNNS-1}
\textit{B.~Calm\`es}, \textit{E.~Dotto}, \textit{Y.~Harpaz},
  \textit{F.~Hebestreit}, \textit{M.~Land}, \textit{K.~Moi},
  \textit{D.~Nardin}, \textit{T.~Nikolaus} and \textit{W.~Steimle}, Hermitian
  k-theory for stable $\infty$-categories i, 2020. ArXiv 2009.07223.

\bibitem{Chardin19}
\textit{M.~Chardin}, \textit{J.~Na\'{e}liton} and \textit{Q.~H. Tran},
  Cohen-{M}acaulayness and canonical module of residual intersections, Trans.
  Amer. Math. Soc. \textbf{372} (2019), no.~3, 1601--1630,
  \doi{10.1090/tran/7607}, \urlprefix\url{https://doi.org/10.1090/tran/7607}.

\bibitem{Chasles}
\textit{M.~Chasles}, Construction des coniques qui satisfont \`a cinq
  conditions. nombre des solutions dans chaque question, C. R. Acad. Sci. Paris
  \textbf{58} (1864), 297--308.

\bibitem{enriched-inflection}
\textit{E.~Cotterill}, \textit{I.~Darago} and \textit{C.~Han}, Arithmetic
  inflection formulae for linear series on hyperelliptic curves  (2023). {\em
  To appear} in Math. Nachricten {\em Preprint} available at
  \url{https://arxiv.org/pdf/2010.01714.pdf}.

\bibitem{CoxLittleOshea-IdealsVarietiesAlgorithms}
\textit{D.~Cox}, \textit{J.~Little} and \textit{D.~O'Shea}, Ideals, varieties,
  and algorithms, Undergraduate Texts in Mathematics, Springer, New York, 2007,
  3rd edition, \doi{10.1007/978-0-387-35651-8},
  \urlprefix\url{https://doi.org/10.1007/978-0-387-35651-8}. An introduction to
  computational algebraic geometry and commutative algebra.

\bibitem{DJK}
\textit{F.~D\'{e}glise}, \textit{F.~Jin} and \textit{A.~A. Khan}, Fundamental
  classes in motivic homotopy theory, J. Eur. Math. Soc. (JEMS) \textbf{23}
  (2021), no.~12, 3935--3993, \doi{10.4171/jems/1094},
  \urlprefix\url{https://doi.org/10.4171/jems/1094}.

\bibitem{Eisenbud_CommutativeAlgebra}
\textit{D.~Eisenbud}, Commutative algebra with a view toward algebraic
  geometry, \textit{Graduate Texts in Mathematics}, volume 150,
  Springer-Verlag, New York, 1995, \doi{10.1007/978-1-4612-5350-1},
  \urlprefix\url{https://doi.org/10.1007/978-1-4612-5350-1}.

\bibitem{eisenbud2006geometry}
\textit{D.~Eisenbud} and \textit{J.~Harris}, The geometry of schemes, volume
  197, Springer Science \& Business Media, 2006.

\bibitem{EisenbudHarris}
\textit{D.~Eisenbud} and \textit{J.~Harris}, 3264 and all that---a second
  course in algebraic geometry, Cambridge University Press, Cambridge, 2016,
  \doi{10.1017/CBO9781139062046},
  \urlprefix\url{https://doi-org.prx.library.gatech.edu/10.1017/CBO9781139062046}.

\bibitem{eisenbud2019duality}
\textit{D.~Eisenbud} and \textit{B.~Ulrich}, Duality and socle generators for
  residual intersections, Journal f{\"u}r die reine und angewandte Mathematik
  \textbf{2019} (2019), no. 756, 183--226.

\bibitem{Fasel-excess}
\textit{J.~Fasel}, The excess intersection formula for {G}rothendieck-{W}itt
  groups, Manuscripta Math. \textbf{130} (2009), no.~4, 411--423,
  \doi{10.1007/s00229-009-0300-5},
  \urlprefix\url{https://doi.org/10.1007/s00229-009-0300-5}.

\bibitem{Fasel-LecturesChowWitt}
\textit{J.~Fasel}, Lectures on {C}how-{W}itt groups, in: Motivic homotopy
  theory and refined enumerative geometry, Amer. Math. Soc., [Providence], RI,
  [2020] \copyright 2020, \textit{Contemp. Math.}, volume 745, 83--121,
  \doi{10.1090/conm/745/15023},
  \urlprefix\url{https://doi.org/10.1090/conm/745/15023}.

\bibitem{FinashinKharlamov-SegreIndices}
\textit{S.~Finashin} and \textit{V.~Kharlamov}, Segre indices and {W}elschinger
  weights as options for invariant count of real lines, Int. Math. Res. Not.
  IMRN  (2021), no.~6, 4051--4078, \doi{10.1093/imrn/rnz208},
  \urlprefix\url{https://doi.org/10.1093/imrn/rnz208}.

\bibitem{fulton-intersection}
\textit{W.~Fulton}, Intersection theory, Ergebnisse der Mathematik und ihrer
  Grenzgebiete, Springer-Verlag, 1984,
  \urlprefix\url{http://books.google.de/books?id=cmoPAQAAMAAJ}.

\bibitem{Fulton-MacPherson78}
\textit{W.~Fulton} and \textit{R.~MacPherson}, Defining algebraic
  intersections, in: Algebraic geometry ({P}roc. {S}ympos., {U}niv. {T}roms\o ,
  {T}roms\o , 1977), Springer, Berlin, 1978, \textit{Lecture Notes in Math.},
  volume 687, 1--30.

\bibitem{M2}
\textit{D.~R. Grayson} and \textit{M.~E. Stillman}, Macaulay2, a software
  system for research in algebraic geometry, Available at
  \url{http://www.math.uiuc.edu/Macaulay2/}.

\bibitem{GriffithsHarris-Principles_algebraic_geometry}
\textit{P.~Griffiths} and \textit{J.~Harris}, Principles of algebraic geometry,
  Wiley Classics Library, John Wiley \& Sons, Inc., New York, 1994,
  \doi{10.1002/9781118032527},
  \urlprefix\url{https://doi.org/10.1002/9781118032527}. Reprint of the 1978
  original.

\bibitem{EGAIV}
\textit{A.~{Grothendieck}}, {\'El\'ements de g\'eom\'etrie alg\'ebrique. IV:
  \'Etude locale des sch\'emas et des morphismes de sch\'emas. R\'edig\'e avec
  la colloboration de Jean Dieudonn\'e.}, {Publ. Math., Inst. Hautes \'Etud.
  Sci.} \textbf{32} (1967), 1--361.

\bibitem{hartshorne1966residues}
\textit{R.~Hartshorne}, Residues and duality, Lecture Notes in Mathematics, No.
  20, Springer-Verlag, Berlin-New York, 1966. Lecture notes of a seminar on the
  work of A. Grothendieck, given at Harvard 1963/64, With an appendix by P.
  Deligne.

\bibitem{hartshorne2013algebraic}
\textit{R.~Hartshorne}, Algebraic geometry, volume~52, Springer Science \&
  Business Media, 2013.

\bibitem{kass2016class}
\textit{J.~L. Kass} and \textit{K.~Wickelgren}, The class of
  {E}isenbud-{K}himshiashvili-{L}evine is the local {$\bold{A}^1$}-{B}rouwer
  degree, Duke Math. J. \textbf{168} (2019), no.~3, 429--469,
  \doi{10.1215/00127094-2018-0046},
  \urlprefix\url{https://doi.org/10.1215/00127094-2018-0046}.

\bibitem{CubicSurface}
\textit{J.~L. Kass} and \textit{K.~Wickelgren}, An arithmetic count of the
  lines on a smooth cubic surface, Compos. Math. \textbf{157} (2021), no.~4,
  677--709, \doi{10.1112/s0010437x20007691},
  \urlprefix\url{https://doi.org/10.1112/s0010437x20007691}.

\bibitem{Kunz-ResiduesDualityAlgVar}
\textit{E.~Kunz}, Residues and duality for projective algebraic varieties,
  \textit{University Lecture Series}, volume~47, American Mathematical Society,
  Providence, RI, 2008, \doi{10.1090/ulect/047},
  \urlprefix\url{https://doi.org/10.1090/ulect/047}. With the assistance of and
  contributions by David A. Cox and Alicia Dickenstein.

\bibitem{Lang-algebra}
\textit{S.~Lang}, Algebra, \textit{Graduate Texts in Mathematics}, volume 211,
  Springer-Verlag, New York, 2002, 3rd edition,
  \doi{10.1007/978-1-4613-0041-0},
  \urlprefix\url{https://doi.org/10.1007/978-1-4613-0041-0}.

\bibitem{Levine-Witt}
\textit{M.~Levine}, Motivic {E}uler characteristics and {W}itt-valued
  characteristic classes, Nagoya Mathematical Journal \textbf{236} (2019), 251
  -- 310. Special Issue: Celebrating the 60th birthday of Shuji Saito.

\bibitem{Levine-EC}
\textit{M.~Levine}, Aspects of enumerative geometry with quadratic forms, Doc.
  Math. \textbf{25} (2020), 2179--2239.

\bibitem{levine2018motivic}
\textit{M.~Levine} and \textit{A.~Raksit}, Motivic {G}auss-{B}onnet formulas,
  Algebra Number Theory \textbf{14} (2020), no.~7, 1801--1851,
  \doi{10.2140/ant.2020.14.1801},
  \urlprefix\url{https://doi.org/10.2140/ant.2020.14.1801}.

\bibitem{lurie-htt}
\textit{J.~Lurie}, Higher topos theory, \textit{Annals of Mathematics Studies},
  volume 170, Princeton University Press, Princeton, NJ, 2009,
  \doi{10.1515/9781400830558},
  \urlprefix\url{https://doi.org/10.1515/9781400830558}.

\bibitem{MartyF09}
\textit{F.~Marty}, Des ouverts {Z}ariski et des morphismes lisses en
  g\'eom\'etrie relative, 2009. {\em PhD Thesis}, available at
  \url{http://thesesups.ups-tlse.fr/540/1/Marty_Florian.pdf}.

\bibitem{Matsumura-CommutativeAlg}
\textit{H.~Matsumura}, Commutative algebra, \textit{Mathematics Lecture Note
  Series}, volume~56, Benjamin/Cummings Publishing Co., Inc., Reading, Mass.,
  1980, 2nd edition.

\bibitem{McKean-Bezout}
\textit{S.~McKean}, An arithmetic enrichment of {B}\'{e}zout's {T}heorem, Math.
  Ann. \textbf{379} (2021), no. 1-2, 633--660,
  \doi{10.1007/s00208-020-02120-3},
  \urlprefix\url{https://doi.org/10.1007/s00208-020-02120-3}.

\bibitem{A1-alg-top}
\textit{F.~Morel}, {$\Bbb A^1$}-algebraic topology over a field,
  \textit{Lecture Notes in Mathematics}, volume 2052, Springer, Heidelberg,
  2012, \doi{10.1007/978-3-642-29514-0},
  \urlprefix\url{https://doi.org/10.1007/978-3-642-29514-0}.

\bibitem{Pauli-thesis}
\textit{S.~Pauli}, Computations in $\mathbb{A}^1$-homotopy theory.
  {C}ontractibility and enumerative geometry, 2020. {\em PhD Thesis}, available
  at \url{https://www.duo.uio.no/handle/10852/80256}.

\bibitem{pauli2020computing}
\textit{S.~Pauli}, Computing $\mathbb{A}^1$-{E}uler numbers with {M}acaulay2,
  2020. {\em Preprint}, available at \url{https://arxiv.org/abs/2003.01775}.

\bibitem{Pauli-Quintic_3_fold}
\textit{S.~Pauli}, Quadratic types and the dynamic {E}uler number of lines on a
  quintic threefold, Adv. Math. \textbf{405} (2022), Paper No. 108508, 37,
  \doi{10.1016/j.aim.2022.108508},
  \urlprefix\url{https://doi.org/10.1016/j.aim.2022.108508}.

\bibitem{peskine1974liaison}
\textit{C.~Peskine} and \textit{L.~Szpiro}, Liaison des vari{\'e}t{\'e}s
  alg{\'e}briques. i, Inventiones mathematicae \textbf{26} (1974), no.~4,
  271--302.

\bibitem{scheja}
\textit{G.~Scheja} and \textit{U.~Storch}, \"{U}ber {S}purfunktionen bei
  vollst\"andigen {D}urchschnitten, J. Reine Angew. Math. \textbf{278/279}
  (1975), 174--190.

\bibitem{FourLines}
\textit{P.~Srinivasan} and \textit{K.~Wickelgren}, An arithmetic count of the
  lines meeting four lines in {${\bf P}^3$}, Trans. Amer. Math. Soc.
  \textbf{374} (2021), no.~5, 3427--3451, \doi{10.1090/tran/8307},
  \urlprefix\url{https://doi.org/10.1090/tran/8307}. With an appendix by Borys
  Kadets, Srinivasan, Ashvin A. Swaminathan, Libby Taylor and Dennis Tseng.

\bibitem{stacks-project}
\textit{T.~{Stacks Project Authors}}, {\itshape Stacks Project},
  \url{http://stacks.math.columbia.edu}, 2018.

\bibitem{van2011gorenstein}
\textit{D.~van Straten} and \textit{T.~Warmt}, Gorenstein-duality for
  one-dimensional almost complete intersections---with an application to
  non-isolated real singularities, Math. Proc. Cambridge Philos. Soc.
  \textbf{158} (2015), no.~2, 249--268, \doi{10.1017/S0305004114000504},
  \urlprefix\url{https://doi.org/10.1017/S0305004114000504}.

\bibitem{ulrich1994artin}
\textit{B.~Ulrich}, Artin--{N}agata properties and reductions of ideals,
  Commutative Algebra: Syzygies, Multiplicities, and Birational Algebra:
  Syzygies, Multiplicities, and Birational Algebra \textbf{159} (1994), 373.

\end{thebibliography}

\end{document}